\numberwithin{equation}{section}
\theoremstyle{plain}
\newtheorem{thm}{Theorem}
\newtheorem{lemma}{Lemma}
\newtheorem{corollary}{Corollary}				
\newtheorem{proposition}{Proposition}
\theoremstyle{definition}
\newtheorem{definition}{Definition}
\newtheorem{assumption}{Assumption}
\newtheorem{remark}{Remark}
\def\one{{\bf 1}\hskip-.5mm}
\def\R{{\mathbb R}}
\def\N{{\mathbb N}}
\def\cF {\mathcal{F}}
\def\diam{\operatorname{Diam}}
\def\supp{\operatorname{Supp}}
\begin{document}

\begin{frontmatter}
\title{Mean Field Limits for Nonlinear Spatially Extended Hawkes Processes with Exponential Memory Kernels}
\runtitle{}

\begin{aug}
\author{\fnms{J.} \snm{Chevallier}\thanksref{m1}\ead[label=e1]{julien.chevallier@u-cergy.fr}},
\author{\fnms{A.} \snm{Duarte}\thanksref{m2}\ead[label=e2]{aline.duart@gmail.com}},
\author{\fnms{E.} \snm{L\"ocherbach}\thanksref{m1}\ead[label=e3]{eva.loecherbach@u-cergy.fr}}
\and
\author{\fnms{G.} \snm{Ost}\thanksref{m2}
\ead[label=e4]{guilhermeost@gmail.com}
\ead[label=u1,url]{http://www.foo.com}}

\thankstext{t1}{\today}
\runauthor{J. Chevallier et al.}

\affiliation{Universit\'e de Cergy-Pontoise\thanksmark{m1} and  Universidade de S\~ao Paulo\thanksmark{m2}}


\end{aug}

\begin{abstract}
We consider spatially extended systems of interacting nonlinear Hawkes
processes modeling large systems of neurons placed in $\R^d$ and study the associated mean field limits. As the total number of neurons tends to infinity, we prove that the evolution of a typical neuron, attached to a given spatial position, can be described by a nonlinear limit differential equation driven by a Poisson random measure. The limit process is described by a neural field equation. As a consequence, we provide a rigorous derivation of the neural field equation based on a thorough mean field analysis. 
\end{abstract}

\begin{keyword}[class=MSC]
\kwd[]{60G55}
\kwd{60G57}
\kwd{60K35}
\end{keyword}

\begin{keyword}
\kwd{Hawkes processes}
\kwd{Spatial mean field}
\kwd{Propagation of Chaos}
\kwd{Neural field equation}
\kwd{Coupling}
\end{keyword}

\end{frontmatter}

\section{Introduction}
The aim of this paper is to present a microscopic model describing a large network of spatially structured interacting neurons, and to study its large population limits. Each neuron is  placed in $\R^d$. Its activity is represented by a point process accounting for the successive times at which the neuron emits an action potential, commonly  referred to as a spike. The firing intensity of a neuron depends on the past history of the neuron. Moreover, this intensity is affected by the activity of other neurons in the network. Neurons interact mostly through chemical synapses. A spike of a pre-synaptic neuron leads to a change in the membrane potential of the post-synaptic neuron (namely an increase if the synapse is excitatory or a decrease if the synapse is inhibitory), possibly after some delay. In neurophysiological terms this is called synaptic integration. Thus, excitatory inputs from the neurons in the network increase the firing intensity, and inhibitory inputs decrease it. Hawkes processes provide good models for this synaptic integration mechanism by the structure of their intensity processes, see \eqref{eq:intensity0} below. We refer to \cite{ccdr}, \cite{chorno},  \cite{hrbr} and to \cite{pat} for the use of Hawkes processes in neuronal modeling. For an overview of point processes used as stochastic models for interacting neurons both in discrete and in continuous time and related issues, see also \cite{gl}.

In this paper, we study spatially structured systems of interacting Hawkes processes representing the time occurrences of action potentials of neurons. Each neuron is characterized by its spike train, and the whole system is described by  the multivariate counting process $(Z^{(N)}_{1}(t), \ldots, Z^{(N)}_{N}(t))_{t\geq 0}$.
Here, the integer $N\geq 1$ stands for the size of the neuronal network and $ Z^{(N)}_{ i} ( t)$ represents the number of spikes of the $i$th neuron in the network during the time interval $ [0, t ].$ This neuron is placed in a position $x_i \in \R^d, $ and we assume that the empirical distribution of the positions $\mu^{(N)}(dx):=N^{-1}\sum_{i=1}^{N}\delta_{x_{i}}(dx)$ converges\footnote{Convergence in the sense of the Wasserstein $W_{2}$-distance is considered in Scenario (\hyperref[ass:deterministic:position]{$S_{2}$}) below.} to some probability measure $ \rho(dx)$ on $\R^d$ as $N\to \infty$.

The multivariate counting process $(Z^{(N)}_{1}( t), \ldots, Z^{(N)}_{N}( t))_{t\geq 0}$ is characterized by its intensity process $ (\lambda^{(N)}_{ 1} ( t) ,\ldots, \lambda^{(N)}_{ N} ( t)  )_{t\geq 0} $ (informally) defined through the relation 
$$ P\left ( Z^{(N)}_{ i } \mbox{ has a jump in } ]t , t + dt ] | {\cal F}_t \right) = \lambda^{(N)}_{i } (t)  dt , $$
where $ {\cal F}_t = \sigma (  Z^{(N)}_{ i} (s)  , \, s \le t ,{1 \le i \le N} ) .$ We work with a spatially structured network of neurons in which $ \lambda^{(N)}_{ i } (t) $ is given by $\lambda^{(N)}_i(t)=f( U^{(N)}_{i}(t- ) )$ with
\begin{equation}\label{eq:intensity0}
U^{(N)}_{i}(t) := e^{-\alpha t}u_{0}(x_{i})+ \frac{1}{N} \sum_{j=1}^{N} w\left(x_{j},x_{i}\right)\int_{]0, t ] }e^{-\alpha(t-s)}dZ^{(N)}_j(s).
\end{equation}
Here, $f : \R \to \R_+$ is the {\it firing rate function} of each neuron and $w:\R^d\times \R^d\to\R$  is the matrix of {\it synaptic strengths };  for each $i, j \in\{1,\ldots, N\} $, the value $ w(x_j, x_i )$ models the influence of neuron $j$ (located in position $x_j$) on neuron $i$ (in position $x_i$). 
The parameter $\alpha\geq 0$ is the {\it leakage rate.} Moreover, $u_0 ( x_i) $ is the initial input to the membrane potential of neuron $i.$ 

Equation (\ref{eq:intensity0}) has the typical form of the intensity of a multivariate nonlinear Hawkes process, going back to \cite{Hawkes1971} and \cite{ho} who introduced Hawkes processes in a univariate and linear framework. We refer to \cite{bm} for the stability properties of multivariate nonlinear Hawkes processes, and to \cite{dfh} and \cite{chevallier} for the study of Hawkes processes in high dimensions.  

In this paper, we study the limit behavior of the system $(Z^{(N)}_{1} ( t),\ldots, Z^{(N)}_{1} ( t) )_{ t\geq 0} $ as $N\to\infty$. Our main result states that -- under suitable regularity assumptions on the parameters $u_0, w $ and $f$ -- the system can be approximated by a system of inhomogeneous independent Poisson processes $(\bar Z_x (t ))_{t\geq 0} $ associated with positions $ x \in \R^d $ which can informally be described as follows. In the limit system, the spatial positions of the neurons are distributed according to the probability measure $ \rho ( dx)$. 
Given a position $x\in\R^d$, the law of the attached process $(\bar Z_x (t ))_{t\geq 0} $ is the law of an inhomogeneous Poisson process having intensity given by $(\lambda ( t, x ))_{t\geq 0}$. Here $ \lambda ( t, x ) = f (u(t,x) ) $ and $u(t, x) $ solves the {\it scalar neural field equation}
\begin{equation}\label{eq:neural:field}
\begin{cases}
\displaystyle \dfrac{\partial u(t,x)}{\partial t}=-\alpha u(t,x)+ \int_{\R^d}w(y,x)f(u(t,y))\rho(dy),\\
u(0,x)= u_{0}(x).
\end{cases}
\end{equation}

Such scalar neural field equations (or neural field models) have been studied extensively in the literature, see e.g.\ \cite{Wilson_Cowan:72}, \cite{Wilson:73} and \cite{Amari:77}.
They constitute an important example of spatially structured neuronal networks with nonlocal interactions, see \cite{Bressloff:12} for a recent and comprehensive review. Let us cite a remark made by Paul Bressloff, on page 15 of \cite{Bressloff:12} : 

{\it There does not currently exist a multi-scale analysis of conductance-based neural networks that provides a rigorous derivation of neural field equations, although some progress has been made in this direction [...]. One crucial step in the derivation of neural field equations presented here was the assumption of slowly varying synaptic currents, which is related to the assumption that there is not significant coherent activity at the level of individual spikes. This allowed us to treat the output of a neuron (or population of neurons) as an instantaneous firing rate. A more rigorous derivation would need to incorporate the mean field analysis of local populations of stochastic spiking neurons into a larger scale cortical model, and to carry out a systematic form of coarse graining or homogenization in order to generate a continuum neural field model.} 

Our model is \emph{not} a conductance-based neural network. Nevertheless, with the present work, to the best of our knowledge, we present a first rigorous derivation of the well-known neural field equation as mean field limit of spatially structured Hawkes processes. 

The paper is organized as follows. In Section \ref{sec:def} we introduce the model and provide an important a priori result, stated in Proposition \ref{Prop:2}, on the expected number of spikes of a typical neuron in the finite size system. In Section \ref{sec:3} we present our main results, Theorems \ref{thm:1} and \ref{thm:2}, on the convergence of spatially extended nonlinear Hawkes processes towards the neural field equation \eqref{eq:neural:field}.
This convergence is expressed in terms of the empirical measure of the spike counting processes associated with the neurons as well as the empirical measure correspondent to their position. Therefore, we work with probability measures on the space $D([0,T],\N)\times \R^d$ and a convenient distance defined on this space which is introduced in  \eqref{def:distance}. The main ingredient of our approach is the unique existence of the limit process (or equivalently, of its intensity process), which is stated in Proposition \ref{prop:uniqueness}. Once the unique existence of the limit process is granted, our proof makes use of a suitable coupling technique for jump processes which has already been applied in \cite{dfh}, \cite{chevallier} and \cite{SusanneEva}, together with a control of the Wasserstein distance of order $2$ between the empirical distribution of the positions $\mu^{(N)}(dx)$ and the limit distribution $\rho(dx).$  

As a by product of these convergence results, we obtain Corollaries \ref{cor:propagation:of:chaos} and \ref{cor:convergencetoneuralfield}, the former being closely connected to the classical propagation of chaos property and the latter stating the convergence of the process $(U^{(N)}_1(t),\ldots, U^{(N)}_N(t))_{t\geq 0}$ towards to the solution of the neural field equation.  

In Section \ref{sec:4} are given the main technical estimates we use. Sections \ref{sec:5} and \ref{sec:6} are devoted to the proofs of our two main results, Theorem \ref{thm:1} and \ref{thm:2}, together with Corollary \ref{cor:convergencetoneuralfield}. In Section \ref{Sec:Discussion_on_the_parameters} we discuss briefly the assumptions that are imposed on the parameters of the model by our approach. Finally, some auxiliary results are postponed to 
\ref{sec:App}.

\section{General Notation, Model Definition and First Results}\label{sec:def}
\subsection{General notation}
Let $(S,d)$ be a Polish metric space and $\mathcal{S}$ be the Borel $\sigma$-algebra of $S$. The supremum norm of any real-valued $\mathcal{S}$-measurable function $h$ defined on $S$ will be denoted by $\|h\|_{S,\infty}:=\sup_{x\in S}|h(x)|$. We will often write $\|h\|_{\infty}$ instead of $\|h\|_{S,\infty}$ when there is no risk of ambiguity. 
For any real-valued function $h(y,x) $ defined on $S\times S$ and $x\in S$, the $x$-section of $h$ is the function  $h^x$ defined on $S$ by $h^x(y)=h(y,x)$ for all $y\in S$. Similarly,  for any $y\in S$, the $y$-section of  $h$ is the function  $h_y$ defined on $S$ by $h_y(x)=h(y,x)$ for all $x\in S.$
The space of all continuous functions from $(S,d)$ to $(\R_+ ,|\cdot|)$ will be denoted by  $C(S,\R_+ ).$ 
For any measure $\nu$ on $(S,\mathcal{S})$ and $\mathcal{S}$-measurable function $h:S\to \R$, we shall write $\left\langle h,\nu \right\rangle=\int_S h(x)\nu(dx)$ when convenient.  
For any $p \geq 1, $ we shall write $L^p(S,\nu)$ to denote the space of $\mathcal{S}$-measurable functions $h: S\to \R$  such that $\| h \|_{L^p ( \nu ) } := (\int |h(x)|^p d \nu(x)  )^{1/p} <\infty$. 

For two probability measures $\nu_1$ and $\nu_2$ on $(S,\mathcal{S})$, the Wasserstein distance of order $p$ between $\nu_1$ and $\nu_2$ associated with the metric $d$ is defined as
$$
W_p(\nu_1,\nu_2)=\inf_{\pi\in\Pi(\nu_1,\nu_2)}\left( \int_S\int_Sd(x,y)^p \pi(dx,dy) \right)^{1/p} ,
$$
where $\pi$ varies over the set $\Pi(\nu_1,\nu_2)$ of all probability measures on the product space $S\times S$ with marginals $\nu_1$ and $\nu_2$. Notice that  the Wasserstein distance of order $p$ between $\nu_1$ and $\nu_2$ can be rewritten as  the infimum of $E[d(X,Y)^p]^{1/p}$ over all possible couplings $(X,Y)$ of the random elements $X$ and $Y$ distributed according to $\nu_1$ and $\nu_2$ respectively, i.e.
$$
W_p(\nu_1,\nu_2)=\inf\left\{E[d(X,Y)^p]^{1/p}: \mathcal{L}(X)=\nu_1\ \mbox{and} \ \mathcal{L}(Y)=\nu_2  \right\}.
$$

Let $Lip(S)$ denote the space of all real-valued Lipschitz functions on $S$ and
$$
\|h\|_{Lip}=\sup_{x\neq y} \frac{|h(x)-h(y)|}{d(x,y)}.
$$
We write $Lip_1(S)$ to denote the subset of $Lip(S)$ such that $\|h\|_{Lip}\leq 1.$
When $p=1,$ the Kantorovich-Rubinstein duality provides another useful representation for $W_1(\nu_1,\nu_2)$, namely
$$
W_1(\nu_1,\nu_2)=\sup_{h\in Lip_1(S)\cap L^1\left(S,d|\nu_1-\nu_2|\right) }\left\{\int_{S}h(x)(\nu_1-\nu_2)(dx)\right\}.
$$
Furthermore, the value of the supremum above is not changed if we impose the extra condition that $h$ is bounded, see e.g.\ Theorem 1.14 of \cite{Villanibook}.

\subsection{The model and preliminary remarks}
Throughout this paper we work on a filtered probability space $(\Omega ,\cF , (\cF_t)_{t\geq 0} , Q  )  $ which is rich enough such that all following processes may be defined on it. 
We consider a system of interacting nonlinear Hawkes processes which is spatially structured. In the sequel, the integer $N\geq 1$ will denote the number of processes in the system. Each process models the behavior of a specific neuron. All neurons are associated with a given spatial position belonging to $ \R^{d}$. These spatial positions are denoted by $x_{1},\dots,x_{N}.$ In the following, $\R^d  $ will be equipped with a fixed norm $ \| \cdot \| .$  The positions $x_{1},\dots,x_{N}$ are assumed to be fixed in this section (unlike Section \ref{sec:5} where the positions are assumed to be random).

We now describe the dynamics of the $N$ Hawkes processes associated with these positions. 

\begin{definition}
\label{def:1}
Let $f:\R\to \R_+$, $w:\R^d\times \R^d \to \R$ and $u_{0}:\R^d \to \R$ be measurable functions and  $\alpha\geq 0$ be a fixed parameter. A $(\cF_t)_{t\geq 0}$-adapted multivariate counting process $(Z_1^{(N)} (t) ,\ldots, Z_1^{(N)} (t) )_{t\geq 0}$ defined on $(\Omega,\cF, (\cF_t)_{t\geq 0}, Q)$ is  said to be a multivariate Hawkes process with parameters $(N,f,w,u_{0},\alpha),$ associated with the positions $ (x_1, \ldots , x_N) , $ if
\begin{enumerate}
\item $Q-$almost surely, for all pairs $i,j\in \{1,\dots,N\}$ with  $i\neq j$, the counting processes $(Z^{(N)}_{i}(t))_{t\geq 0}$ and $(Z^{(N)}_{j}(t))_{t\geq 0}$ never jump simultaneously.
\item For each $i\in  \{1,\dots,N\}$ and $t\geq 0$, the compensator of $Z^{(N)}_{i}(t)$ is given by  $\int_{0}^t\lambda^{(N)}_i(s )ds$
where $(\lambda^{(N)}_i(t))_{t\geq 0 }$ is the non-negative $(\cF_t)_{t\geq 0}-$progressively measurable process defined, for all $t\geq 0$, by $\lambda^{(N)}_i(t)=f( U^{(N)}_{i}(t- ) )$ with
\begin{equation}\label{eq:intensity}
U^{(N)}_{i}(t)= e^{-\alpha t}u_{0}(x_{i})+ \frac{1}{N} \sum_{j=1}^{N} w\left(x_{j},x_{i}\right)\int_{]0, t ] }e^{-\alpha(t-s)}dZ^{(N)}_j(s).
\end{equation}
\end{enumerate}
\end{definition}

\begin{remark}
\label{rmk:1}
Notice that for each $i\in\{1,\ldots, N\}$, the process $(U^{(N)}_i(t))_{t\geq 0}$ satisfies the following stochastic differential equation
\begin{equation*}
d U^{(N)}_i(t) = -\alpha U^{(N)}_{i}(t) dt + \frac{1}{N} \sum_{j=1}^{N} w\left(x_{j},x_{i}\right) dZ^{(N)}_j(t).
\end{equation*}
\end{remark}

The functions $f : \R \to \R_+ $  and $w:\R^d \times \R^d \to \R$  are called {\it spike rate function} and {\it matrix of synaptic strengths} respectively.
The parameter $\alpha\geq 0$ is called the {\it leakage rate.} For each neuron $i\in \{1,\dots,N\}$, $u_{0}(x_{i})$ is interpreted as an initial input to the spike rate of neuron $i$ (its value depends on the position $x_{i}$ of the neuron). \footnote{Without too much effort, the initial input $u_{0}(x_{i})$ could be replaced by a random input of the form
$ \frac{1}{N} \sum_{j=1}^{N} U_{i,j} $, where the random variables $U_{i,1}, \ldots , U_{i,N}$ are i.i.d. distributed according to some probability measure $\nu (x_{i}, du )$ defined on $\R^d$.
In the limit $N\to +\infty$, we have the correspondence 
$\lim_{N\to +\infty} N^{-1} \sum_{j=1}^{N} U_{i,j}= \int u \nu (x_{i}, du) = u_{0}(x_{i})$.
}

An alternative definition of multivariate Hawkes processes which will be used later on is the following.

\begin{definition}
\label{def:2}
Let $(\Pi_i(dz,ds))_{1\leq i\leq N}$ be a sequence of i.i.d.\ Poisson random measures with intensity measure $dsdz$ on $\R_+\times \R_+$. A $(\cF_t)_{t\geq 0}$-adapted multivariate counting process $(Z_1^{(N)}(t), \ldots, Z_N^{(N)}(t))_{t\geq 0}$ defined on $(\Omega,\cF, (\cF_t)_{t\geq 0}, Q)$ is said to be a multivariate Hawkes process with parameters $(N,f,w,u_{0},\alpha),$ associated with the positions $ (x_1, \ldots , x_N) , $ if  $Q-$almost surely, for all $t\geq 0$ and $i\in\{1,\ldots, N\}$,
\begin{equation}
Z^{(N)}_i(t)=\int_{0}^{t}\int_{0}^{\infty}\one_{\left\{z\leq f\left( U^{(N)}_i(s- ) \right)\right\}}
\Pi_i(dz,ds),
\end{equation}
where $U^{N}_i(s)$ is given in \eqref{eq:intensity}.
\end{definition}

\begin{proposition}
\label{prop:equal_definition}
Definitions \ref{def:1} and \ref{def:2} are equivalent. 
\end{proposition}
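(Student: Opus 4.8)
The statement asserts equivalence of the compensator description (Definition \ref{def:1}) and the Poisson-thinning description (Definition \ref{def:2}), so the plan is to prove the two implications separately.

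\textbf{Definition \ref{def:2} $\Rightarrow$ Definition \ref{def:1}.} This is the routine direction. Assume $(Z^{(N)}_1,\ldots,Z^{(N)}_N)$ admits the thinning representation with i.i.d. Poisson random measures $(\Pi_i)$. Property 1 of Definition \ref{def:1} is immediate: two independent Poisson random measures on $\R_+\times\R_+$ almost surely have no atom sharing the same time coordinate, so for $i\neq j$ the processes $Z^{(N)}_i$ and $Z^{(N)}_j$ never jump simultaneously. For property 2, write $\tilde\Pi_i(dz,ds):=\Pi_i(dz,ds)-dz\,ds$ for the compensated measure, which is a martingale measure for $(\cF_t)$. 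Since $U^{(N)}_i$ is left-continuous and adapted, $(\omega,s,z)\mapsto \one_{\{z\le f(U^{(N)}_i(s-))\}}$ is predictable, and a localization along the jump times of the system makes the stochastic integral against $\tilde\Pi_i$ a local martingale. Because $\int_0^\infty \one_{\{z\le f(U^{(N)}_i(s-))\}}\,dz=f(U^{(N)}_i(s-))$, we get that $Z^{(N)}_i(t)-\int_0^t f(U^{(N)}_i(s-))\,ds$ is a local martingale, i.e.\ $\int_0^t f(U^{(N)}_i(s-))\,ds=\int_0^t\lambda^{(N)}_i(s)\,ds$ is the compensator of $Z^{(N)}_i$, which is Definition \ref{def:1}.

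\textbf{Definition \ref{def:1} $\Rightarrow$ Definition \ref{def:2}.} Here one must reconstruct the driving Poisson noise. Let $(Z^{(N)}_1,\ldots,Z^{(N)}_N)$ satisfy Definition \ref{def:1}. Enlarging the (already rich enough) space, add i.i.d.\ Poisson random measures $(\bar\Pi_i)_{1\le i\le N}$ on $\R_+\times\R_+$ with intensity $dz\,ds$ and i.i.d.\ random variables $(V^i_k)_{i,k}$ uniform on $[0,1]$, all mutually independent and independent of $\cF_\infty$. Let $T^i_1<T^i_2<\cdots$ denote the successive jump times of $Z^{(N)}_i$ and set
\begin{equation*}
\Pi_i(dz,ds):=\sum_{k\ge 1}\delta_{\bigl(f(U^{(N)}_i(T^i_k-))\,V^i_k,\;T^i_k\bigr)}(dz,ds)\;+\;\one_{\{z>f(U^{(N)}_i(s-))\}}\,\bar\Pi_i(dz,ds).
\end{equation*}
Then $\int_0^t\int_0^\infty\one_{\{z\le f(U^{(N)}_i(s-))\}}\Pi_i(dz,ds)=\#\{k:T^i_k\le t\}=Z^{(N)}_i(t)$, so the representation of Definition \ref{def:2} holds; it remains to identify the law of $(\Pi_i)_i$. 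One computes the $(\cF_t)$-enlarged compensator of $\Pi_i$: the ``below the graph'' part, using that $Z^{(N)}_i$ has compensator density $f(U^{(N)}_i(s-))$ together with the conditional uniformity of the mark $f(U^{(N)}_i(s-))V^i_k$ on $[0,f(U^{(N)}_i(s-))]$, contributes exactly $\one_{\{z\le f(U^{(N)}_i(s-))\}}\,dz\,ds$, while the ``above the graph'' part contributes $\one_{\{z>f(U^{(N)}_i(s-))\}}\,dz\,ds$ by independence of $\bar\Pi_i$. The sum is the deterministic measure $dz\,ds$, so by Watanabe's martingale characterization of Poisson random measures each $\Pi_i$ is Poisson with intensity $dz\,ds$. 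For the joint law, property 1 of Definition \ref{def:1} ensures the ``below the graph'' atoms of distinct $\Pi_i$ never occur at the same time, and the $\bar\Pi_i,V^i_k$ are independent; hence the joint compensator on $\{1,\ldots,N\}\times\R_+\times\R_+$ is the deterministic product measure, and the multivariate version of Watanabe's theorem yields that $(\Pi_i)_{1\le i\le N}$ are i.i.d.\ Poisson random measures with intensity $dz\,ds$, as required by Definition \ref{def:2}.

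\textbf{Main obstacle.} The first implication is standard martingale calculus; the delicate point is the converse. The crux is the compensator computation for the ``below the graph'' part of $\Pi_i$: it relies essentially on the compensator of $Z^{(N)}_i$ being \emph{absolutely continuous} with density $f(U^{(N)}_i(s-))$ and on the conditional uniformity of the inserted marks, and must be combined with the (multivariate) Watanabe characterization to upgrade ``deterministic compensator'' to ``i.i.d.\ Poisson''. One should also handle with minimal care the bookkeeping of the enlargement of the probability space and the mild integrability needed to justify the stochastic integrals (handled by localization along the jump times of the finite system, since each $Z^{(N)}_i$ is a genuine counting process and hence finite on compacts).
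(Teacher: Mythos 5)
Your proof is correct and takes essentially the same route as the paper, which gives no argument of its own but defers to Proposition~3 of \cite{dfh}. That reference's proof is exactly your two-step scheme: the thinning direction via the compensator of the Poisson integral, and the converse by reconstructing the driving measures from the jump times, independent uniform marks below the graph and an auxiliary independent Poisson measure above it, concluding with Watanabe's martingale characterization.
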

We refer the reader to Proposition 3 of \cite{dfh} for a proof of Proposition \ref{prop:equal_definition}.
In what follows we will work under 

\begin{assumption}\label{ass:1}
The function $f$ is Lipschitz continuous with Lipschitz norm $L_f>0.$ 
\end{assumption}

\begin{assumption}
\label{Ass:2}
The initial condition $u_{0}$ is Lipschitz continuous with Lipschitz norm $ L_{u_0} > 0 $ and bounded, i.e., $ \| u_0 \|_{\R^d,\infty}<\infty.$
\end{assumption}

\begin{proposition}\label{prop:existence}
Under Assumption \ref{ass:1}, there exists a path-wise unique multivariate Hawkes process with parameters $(N,f,w,u_{0},\alpha)$ such that $t \mapsto \sup_{ 1 \le i \le N} E ( Z_i^{(N)} (t) ) $ is locally bounded.
\end{proposition}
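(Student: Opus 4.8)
The plan is to construct the process by a standard Picard-type iteration on a finite time interval $[0,T]$, using the thinning (Poisson random measure) representation of Definition~\ref{def:2}, and then to extend to all of $\R_+$ by patching together the intervals. Fix the i.i.d.\ Poisson random measures $(\Pi_i(dz,ds))_{1\le i\le N}$ of Definition~\ref{def:2}. I would define a sequence of processes $(Z_i^{(N),n}(t))_{t\ge 0}$, $i=1,\dots,N$, by setting $Z_i^{(N),0}\equiv 0$ and, given the $n$-th iterate, letting $U_i^{(N),n}(t)$ be defined by formula~\eqref{eq:intensity} with $Z^{(N)}$ replaced by $Z^{(N),n}$, and then
\begin{equation*}
Z^{(N),n+1}_i(t)=\int_{0}^{t}\int_{0}^{\infty}\one_{\left\{z\leq f\left( U^{(N),n}_i(s- ) \right)\right\}}\Pi_i(dz,ds).
\end{equation*}
Since $f$ is Lipschitz (Assumption~\ref{ass:1}), it has at most linear growth, so each $U_i^{(N),n}$ and $Z_i^{(N),n}$ is well-defined; a first step is to check, by a Gronwall argument on $t\mapsto \sup_{i}E(Z_i^{(N),n}(t))$ applied uniformly in $n$, that these expectations are finite and locally bounded, with a bound independent of $n$ (this uses $\|u_0\|_\infty<\infty$ only mildly — boundedness of $u_0$ is convenient but linear growth of $f$ plus Gronwall already controls everything on compacts).

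Next I would establish contraction. Let $\Delta^n_i(t)=E\bigl[\,\sup_{s\le t}|Z_i^{(N),n+1}(s)-Z_i^{(N),n}(s)|\,\bigr]$ (or the $L^1$ version without the sup, then upgrade). Comparing the two thinning integrals and using that the symmetric difference of the two acceptance regions $\{z\le f(U^{n}_i(s-))\}$ and $\{z\le f(U^{n-1}_i(s-))\}$ has Lebesgue measure $|f(U^{n}_i(s-))-f(U^{n-1}_i(s-))|\le L_f|U^{n}_i(s-)-U^{n-1}_i(s-)|$, together with the bound $|U^{n}_i(s)-U^{n-1}_i(s)|\le \tfrac1N\sum_j |w(x_j,x_i)|\int_{]0,s]}e^{-\alpha(s-r)}d|Z_j^{(N),n}-Z_j^{(N),n-1}|(r)$, one gets after taking expectations an inequality of the form
\begin{equation*}
\sup_i \Delta^{n}_i(t)\ \le\ L_f\,\bar w\int_0^t \sup_i\Delta^{n-1}_i(s)\,ds,
\end{equation*}
where $\bar w$ bounds $\tfrac1N\sum_j|w(x_j,x_i)|$ uniformly in $i$ (finite since there are finitely many neurons). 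Iterating yields $\sup_i\Delta^n_i(T)\le (L_f\bar w T)^n/n!\cdot C$, which is summable; hence $(Z_i^{(N),n})_n$ is Cauchy and converges, uniformly on $[0,T]$ in the appropriate sense, to a limit $Z^{(N)}$. Passing to the limit in the thinning equation (the indicator is right-continuous in the threshold and the exceptional set where $z$ equals a threshold value is $\Pi_i$-null) shows $Z^{(N)}$ solves Definition~\ref{def:2}, hence by Proposition~\ref{prop:equal_definition} it is a Hawkes process with the required parameters, and the uniform-in-$n$ moment bound passes to the limit to give local boundedness of $t\mapsto\sup_i E(Z_i^{(N)}(t))$.

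For pathwise uniqueness, I would take any two solutions $Z$ and $\tilde Z$ driven by the \emph{same} $(\Pi_i)$, each with $t\mapsto\sup_i E(Z_i(t))$ locally bounded, and run exactly the same comparison: the difference satisfies $\sup_i E[\sup_{s\le t}|Z_i(s)-\tilde Z_i(s)|]\le L_f\bar w\int_0^t\sup_i E[\sup_{s\le r}|Z_i(s)-\tilde Z_i(s)|]\,dr$ on $[0,T]$, where the left side is finite precisely because of the assumed local boundedness of the moments, and Gronwall forces it to vanish; since $T$ is arbitrary, $Z=\tilde Z$ almost surely for all $t$. Finally, to go from $[0,T]$ to $\R_+$ one notes the construction and uniqueness on $[0,T]$ are consistent for increasing $T$ (the restriction of the solution on $[0,T']$ to $[0,T]$ solves the problem on $[0,T]$), so the solutions glue into a process on $\R_+$.

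I expect the main obstacle to be the \emph{bookkeeping around the moment condition} rather than any deep difficulty: one must be careful that the Gronwall argument is run on a quantity already known to be finite (this is exactly why the statement restricts to solutions with locally bounded mean), and one must handle the interchange of the supremum over $s\le t$ with expectations and with the stochastic integrals cleanly — most naturally by first proving contraction/uniqueness for the unsup'd $L^1$ quantities $E|Z_i^{(N),n+1}(t)-Z_i^{(N),n}(t)|$, and only afterwards, using the monotonicity of counting processes and Doob's inequality for the compensated martingales, upgrading to the pathwise (supremum-in-time) statement. The Lipschitz property of $f$ is used only through the linear-growth and the modulus-of-continuity bounds above; boundedness of $u_0$ in Assumption~\ref{Ass:2} is not needed for this proposition, only Assumption~\ref{ass:1}.
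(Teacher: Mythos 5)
Your proposal is correct and follows essentially the same route as the paper, which does not spell out a proof but defers to ``classical Picard iteration arguments'' as in Theorem~6 of the cited reference of Delattre et al.: thinning representation, Picard iteration with a Gronwall/factorial contraction in expected total variation, and uniqueness by the same comparison for two solutions driven by the same Poisson measures. Your observations that only Assumption~\ref{ass:1} is needed and that the Gronwall argument must be run on a quantity already known to be finite are both accurate.
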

The proof of Proposition \ref{prop:existence} relies on classical Picard iteration arguments and can be found (in a more general framework) in Theorem 6 of \cite{dfh}. 

The next result provides an upper bound for the expected number of jumps in a finite time interval $[0,T]$, for each  fixed $T>0$, which will be crucial later. 

\begin{proposition}
\label{Prop:2}
Under Assumptions \ref{ass:1} and \ref{Ass:2}, for each $N\geq 1$ and $T>0$, the following inequalities hold.    
\begin{equation}
\frac{1}{N}\sum_{i=1}^N E\left[(Z^{(N)}_i(T)) \right]\leq T\left(f(0)+L_f \|u_0\|_{\R^d,\infty} \right)\exp\left\{TL_f  \sup_{j}\|w_{x_j}\|_{L^1(\mu^{(N)})} \right\},
\end{equation}
and
\begin{multline}
\frac{1}{N}\sum_{i=1}^N E\left[(Z^{(N)}_i(T))^2 \right]\leq  \exp \left\{  T \left( 1 + 4 L_f^2 \left(\sup_{j}\|w_{x_j}\|_{L^2(\mu^{(N)})} \right)^2 \right) \right\} \times  \\
\times \left[ T\left(f(0)+L_f \| u_0\|_{\R^d,\infty} \right)\exp\left\{TL_f \sup_{j}\|w_{x_j}\|_{L^1(\mu^{(N)})}\right\} + 2 T f(0)^2 + 4 L_f^2 T\| u_0\|_{\R^d,\infty}^2 \right] 
,
\end{multline}
where $\mu^{(N)}(dx)=N^{-1}\sum_{i=1}^N\delta_{x_i}(dx)$ is the empirical distribution associated with the fixed spatial positions $x_1, \ldots, x_N\in \mathbb{R}^d.$
\end{proposition}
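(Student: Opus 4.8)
The plan is to derive both inequalities from the compensator structure of the Hawkes process, the linear growth bound $f(u)\le f(0)+L_f|u|$ coming from Assumption~\ref{ass:1}, and Gr\"onwall's lemma. Throughout, write $m_i(t)=E[Z^{(N)}_i(t)]$, $q_i(t)=E[(Z^{(N)}_i(t))^2]$, and let $\bar m(t)=\frac1N\sum_{i=1}^N m_i(t)$ and $\bar q(t)=\frac1N\sum_{i=1}^N q_i(t)$. By Proposition~\ref{prop:existence}, $\bar m$ is locally bounded; the a priori finiteness of $\bar q$, which is needed to legitimate the computations below, I would obtain by a routine localization, replacing $t$ by $t\wedge\tau_K$ with $\tau_K=\inf\{t\ge0:\sum_{i=1}^N Z^{(N)}_i(t)\ge K\}$, carrying out the estimate for the (bounded) stopped process, and letting $K\to\infty$ by monotone convergence, using that $\tau_K\to\infty$ $Q$-a.s.\ since the first moments are finite.

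The one pathwise ingredient is that, for all $s>0$ and $1\le i\le N$,
\[ |U^{(N)}_i(s-)|\le\|u_0\|_{\R^d,\infty}+\frac1N\sum_{j=1}^N|w(x_j,x_i)|\,Z^{(N)}_j(s-), \]
obtained from \eqref{eq:intensity} by bounding $e^{-\alpha t}\le1$, $e^{-\alpha(s-r)}\le1$ (this is where $\alpha\ge0$ enters) and $\int_{]0,s[}dZ^{(N)}_j(r)=Z^{(N)}_j(s-)$; with Assumption~\ref{ass:1} this gives $\lambda^{(N)}_i(s)\le f(0)+L_f\|u_0\|_{\R^d,\infty}+L_f\frac1N\sum_j|w(x_j,x_i)|Z^{(N)}_j(s-)$. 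For the first moment, since the compensator of $Z^{(N)}_i(t)$ is $\int_0^t\lambda^{(N)}_i(s)\,ds$, one has $m_i(t)=\int_0^t E[\lambda^{(N)}_i(s)]\,ds$; averaging over $i$ and recognising $\frac1N\sum_i|w(x_j,x_i)|=\|w_{x_j}\|_{L^1(\mu^{(N)})}$ yields
\[ \bar m(t)\le t\bigl(f(0)+L_f\|u_0\|_{\R^d,\infty}\bigr)+L_f\sup_j\|w_{x_j}\|_{L^1(\mu^{(N)})}\int_0^t\bar m(s)\,ds, \]
and Gr\"onwall's lemma (the inhomogeneous term being non-decreasing in $t$) gives the first inequality.

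For the second moment I would apply the change of variables formula to $t\mapsto(Z^{(N)}_i(t))^2$: since the jumps of $Z^{(N)}_i$ are of size one, $(Z^{(N)}_i(t))^2=\int_0^t\bigl(2Z^{(N)}_i(s-)+1\bigr)\,dZ^{(N)}_i(s)$, hence $q_i(t)=2\int_0^t E[Z^{(N)}_i(s-)\lambda^{(N)}_i(s)]\,ds+m_i(t)$. Using $2Z^{(N)}_i(s-)\lambda^{(N)}_i(s)\le(Z^{(N)}_i(s-))^2+(\lambda^{(N)}_i(s))^2$ together with $(\lambda^{(N)}_i(s))^2\le2f(0)^2+4L_f^2\|u_0\|_{\R^d,\infty}^2+4L_f^2\bigl(\frac1N\sum_j|w(x_j,x_i)|Z^{(N)}_j(s-)\bigr)^2$, the crucial step is the pathwise Cauchy--Schwarz estimate
\[ \frac1N\sum_{i=1}^N\Bigl(\frac1N\sum_{j=1}^N|w(x_j,x_i)|\,Z^{(N)}_j(s-)\Bigr)^2\le\Bigl(\sup_l\|w_{x_l}\|_{L^2(\mu^{(N)})}\Bigr)^2\frac1N\sum_{j=1}^N(Z^{(N)}_j(s-))^2, \]
which follows by expanding the outer square, using Cauchy--Schwarz in the inner sum over $i$ together with the elementary bound $\|w_{x_j}\|_{L^2(\mu^{(N)})}\|w_{x_k}\|_{L^2(\mu^{(N)})}\le(\sup_l\|w_{x_l}\|_{L^2(\mu^{(N)})})^2$, and finally $\bigl(\frac1N\sum_j Z^{(N)}_j\bigr)^2\le\frac1N\sum_j(Z^{(N)}_j)^2$. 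Averaging over $i$ and taking expectations, one obtains
\[ \bar q(t)\le\Bigl[\bar m(t)+2tf(0)^2+4L_f^2t\|u_0\|_{\R^d,\infty}^2\Bigr]+\Bigl(1+4L_f^2\bigl(\sup_l\|w_{x_l}\|_{L^2(\mu^{(N)})}\bigr)^2\Bigr)\int_0^t\bar q(s)\,ds. \]
Bounding $\bar m(t)$ by the (non-decreasing) right-hand side of the first inequality and invoking Gr\"onwall's lemma once more then produces exactly the second inequality.

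The main obstacle is twofold. First, second moments are not a priori finite, because the intensity grows linearly in the $Z^{(N)}_j$'s, so the localization argument must be carried out carefully (though it is standard). Second, the Cauchy--Schwarz step must be steered so that it lands precisely on the $x$-sections $w_{x_j}$, that is on $\sup_j\|w_{x_j}\|_{L^2(\mu^{(N)})}^2=\sup_j\frac1N\sum_i|w(x_j,x_i)|^2$ rather than on the other sections, and all the constants must be tracked so that the two successive Gr\"onwall applications output precisely the stated exponential factors.
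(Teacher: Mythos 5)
Your proof is correct and follows essentially the same route as the paper's: the linear growth bound $f(u)\le f(0)+L_f|u|$, the compensator identity for the first moment, It\^o's formula (your change-of-variables identity) together with $2ab\le a^2+b^2$ for the second moment, localization via the stopping times $\tau_K$, and two applications of Gr\"onwall's lemma. The only cosmetic difference is that the paper obtains the key bound by applying Jensen's inequality to $\bigl(\tfrac1N\sum_j|w(x_j,x_i)|Z^{(N)}_j\bigr)^2$ before averaging over $i$, whereas you expand the square and apply Cauchy--Schwarz over $i$; both land on the same constant $\bigl(\sup_j\|w_{x_j}\|_{L^2(\mu^{(N)})}\bigr)^2$.
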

The proof of Proposition \ref{Prop:2} will be given in 
\ref{proof of Prop.2}.

\section{Convergence of Spatially Extended Hawkes processes}\label{sec:3}
In this section, we present two convergence results for the empirical process of nonlinear spatially extended Hawkes processes, our main results. We fix a probability measure $ \rho(dx)$  on $( \R^d , {\cal B} ( \R^d ) ) $; it is the expected limit of the empirical distribution $\mu^{(N)}(dx)=N^{-1}\sum_{i=1}^{N}\delta_{x_{i}}(dx).$ The following additional set of assumptions will be required as well. 

\begin{assumption}\label{ass:exponential:moments}
\begin{sloppypar}
The measure $ \rho(dx)$ admits exponential moments, i.e., there exists a parameter $ \beta > 0$  such that $\mathcal{E}_{\beta}:=\int e^{\beta \|x\| }\rho(dx)<\infty$. 
\end{sloppypar}
\end{assumption}

\begin{assumption}
\label{ass:w:lipschitz}
The matrix of synaptic strengths $w$ satisfies the following Lipschitz and boundedness conditions. \\
1. There exists a constant $L_w>0$ such that for all  $x,x', y, y'\in \R^d,$ 
\begin{equation}\label{eq:lipschitz:assumption:w}
| w(y, x ) - w(y', x' ) | \le L_w ( \| x - x' \| +  \| y- y' \| )  .
\end{equation}
2. There exist $x_{0}$ and $y_{0}$ in $\mathbb{R}^{d}$ such that $||w(y_{0},\cdot)||_{\R^d,\infty}<+\infty$ and $||w(\cdot,x_{0})||_{\R^d,\infty}<+\infty$. 
\end{assumption}

\begin{remark}\label{rem:2} 
Notice that the vectors $x_{0}$ and $y_{0}$ can be taken as the origin $0^{d}$ in $\mathbb{R}^{d}$.  This follows directly from the Lipschitz continuity of $w$. Moreover, under Assumption \ref{ass:exponential:moments}, notice that Assumption \ref{ass:w:lipschitz}-2. is equivalent to the fact that the $y$-sections $w_y$ and the $x$-sections $w^x$ of $w$ are uniformly square integrable with respect to $\rho(dx)$, i.e.,
\begin{equation}\label{eq:assumption:w:2}
\sup_{ y \in \R^d } \int | w(y, x) |^2 \rho (dx) < \infty,
\end{equation} 
and
\begin{equation}\label{eq:assumption:w:2_x_fixed}
\sup_{ x \in \R^d } \int | w(y, x) |^2 \rho (dy) < \infty.
\end{equation}
In fact, \eqref{eq:assumption:w:2} and \eqref{eq:assumption:w:2_x_fixed} are the assumptions that naturally appear through the proofs (see e.g. \eqref{eq:uniform:control:discrete:L2} below where we need \eqref{eq:assumption:w:2}, and \eqref{eq:lal} where we need \eqref{eq:assumption:w:2_x_fixed}). 

\begin{proof}[Proof of the equivalence of Ass. \ref{ass:w:lipschitz}-2. with \eqref{eq:assumption:w:2} and \eqref{eq:assumption:w:2_x_fixed}, under Ass. \ref{ass:exponential:moments} and \eqref{eq:lipschitz:assumption:w}]\

We will only show that $\|w_{0^d}\|_{\R^d,\infty}$ is finite  if and only if \eqref{eq:assumption:w:2_x_fixed} holds, the other case is treated similarly. Observe that the Lipschitz continuity of $w$ implies that for each $x,y\in\R^d$,
$$
(\max\{0,|w_{0^d}(x)|-L_w\|y\|\})^2\leq |w^x(y)|^2\leq 2\left(\|w_{0^d}\|^2_{\R^d,\infty}+L^2_w\|y\|^2\right).
$$
As a consequence of the inequality above, it follows that
\begin{multline*}
\frac{|w_{0^d}(x)|^2}{4}\rho\left(B\Big( 0^d,\frac{|w_{0^d}(x)|}{2L_w} \Big) \right) \\
\leq \int |w^x(y)|^2\rho(dy)\leq 2\left(\|w_{0^d}\|^2_{\R^d,\infty}+L^2_w\int \|y\|^2\rho(dy)\right),
\end{multline*}
where for each $z\in\R^d$ and $r>0$, $B(z,r)$ is the ball of radius $r$ centered at $z$. Now, taking the supremum with respect to $x,$ we deduce that
\begin{equation}\label{eq:firsteq}
\sup_{x\in\R^d} \int |w^x(y)|^2\rho(dy)\leq 2\left(\|w_{0^d}\|^2_{\R^d,\infty}+L^2_w\int \|y\|^2\rho(dy)\right)
\end{equation}
and
\begin{equation}\label{eq:seceq}
\frac{\|w_{0^d}\|_{\R^d,\infty}^2}{4}\rho(B(0^d,\|w_{0^d}\|_{\R^d , \infty }/2L_w))\leq \sup_{x\in\R^d} \int |w^x(y)|^2\rho(dy).
\end{equation}
Under Assumption \ref{ass:exponential:moments}, we know that $\int \|y\|^2\rho(dy)$ is finite. Therefore, \eqref{eq:firsteq} shows that $\|w_{0^d}\|^2_{\R^d,\infty} < \infty $ implies \eqref{eq:assumption:w:2_x_fixed}. On the other hand,  \eqref{eq:seceq} shows that \eqref{eq:assumption:w:2_x_fixed} implies $\|w_{0^d}\|^2_{\R^d,\infty} < \infty $   if $ \rho(B(0^d,\|w_{0^d}\|_{\R^d , \infty }/2L_w)) > 0 .$ Finally, if $ \rho(B(0^d,\|w_{0^d}\|_{\R^d , \infty }/2L_w)) = 0 ,$ then it trivially holds that $\|w_{0^d}\|^2_{\R^d,\infty} < \infty $ since $L_w > 0 $ and since $ \rho(B(0^d, \infty)) = 1 .$ 
\end{proof}
  
\end{remark}
 
Let us now introduce, for each $T>0$ fixed,  the empirical measure 
\begin{equation}
\label{def:marginal_empirical_measure}
P^{(N,N)}_{[0,T]}( d\eta,dx)= \frac{1}{N}\sum_{i=1}^{N}\delta_{\left((Z^{(N)}_i(t))_{0\leq t\leq T}, x_i\right)}(d\eta,dx).
\end{equation}
This measure $P^{(N,N)}_{[0,T]}$ is a random probability measure  on the space $D([0,T],\N)\times \R^d ,$ where  $D([0,T],\N)$ is the space of c\`adl\`ag functions defined on the interval $[0,T]$ taking values in $\N$. The measure is random since it depends on the realization  of the $N$ counting processes $ Z_1^{(N)}, \ldots, Z_N^{(N)} , $ defined on $(\Omega ,\cF , (\cF_t)_{t\geq 0}, Q )$.

Under two frameworks described below, our two main results state that $P^{(N,N)}_{[0,T]}$ converges as $N\to\infty$ to a deterministic probability measure $P_{[0,T]}$ defined on the space $D([0,T],\N)\times \R^d$.  This convergence is stated with respect to a convenient distance between random probability measures $P$ and $\tilde P $ on the space $D([0,T],\N)\times \R^d$ which is introduced now.  

For c\`adl\`ad functions $\eta, \xi\in D([0,T],\N)$ we consider the distance $d_S(\eta,\xi)$ defined by
\begin{equation}
\label{def:skorohod_like_dist}
d_S(\eta,\xi)=\inf_{\phi\in I}\left\{\|\phi\|_{[0,T],*}\vee \|\eta-\xi(\phi)\|_{[0,T],\infty}\right\},
\end{equation}
where $I$ is the set of non-decreasing functions $\phi:[0,T]\to [0,T]$ satisfying $\phi(0)=0$ and $\phi(T)=T$ and where for any function $\phi\in I$ the norm $\|\phi\|_{[0,T],*}$ is defined as
$$
\|\phi\|_{[0,T],*}=\sup_{0\leq s<t\leq T}\log\left(\frac{\phi(t)-\phi(s)}{t-s}\right). 
$$
The metric $d_S(\cdot,\cdot)$ is equivalent to the classical Skorokhod distance. More importantly the metric space $(D([0,T],\N),d_S)$ is Polish, see for instance \cite{Billingsley:68}.

Finally, for any random probability measures $P$ and $ \tilde P$ on $D([0,T],\N)\times \R^d,$ we define the Kantorovich-Rubinstein like distance between $P$ and $\tilde P$  as
\begin{equation}
\label{def:distance}
d_{KR}(P, \tilde P )=\sup_{g\in Lip_1(D([0,T]\times \N)\times \R^d)}{ E }\left[|\left\langle g,P-\tilde P  \right\rangle |\right],
\end{equation}
where we recall that $\langle g,P-\tilde P \rangle =\int_{\R^d}\int_{D([0,T],\N)} g(\eta,x)(P-\tilde P )(d\eta,dx)$. Here the expectation is taken with respect to\ the probability measure $Q $ on $(\Omega ,\cF , (\cF_t)_{t\geq 0} )$, that is with respect to\ the randomness present in the jumps of the process.

Our convergence results are valid under two scenarios.
\begin{definition}\label{def:scenarios}
Consider the two following assumptions:

\smallskip
\newlength{\asslabel}
\setlength{\asslabel}{1.3cm}
\newlength{\assdef}
\setlength{\assdef}{13cm}
\newcommand{\spacenot}{\vspace{-0.2cm}}
\noindent
\begin{tabular}{>{\centering}p{\asslabel}|p{\assdef}}
\phantomsection
\label{ass:random:position} 
\spacenot $\left(S_{1}\right)$:   & \textbf{Random spatial distribution.}

The positions $x_{1},\dots,x_{N}$ are the realizations of an i.i.d.\ sequence of random variables $X_1, \ldots , X_N, \ldots , $ distributed according to $\rho(dx)$.
\end{tabular}

\medskip
\noindent
\begin{tabular}{>{\centering}p{\asslabel}|p{\assdef}}
\phantomsection
\label{ass:deterministic:position} 
\spacenot $\left(S_{2}\right)$:   & \textbf{Deterministic spatial distribution.}

The positions $x_{1},\dots,x_{N}$ are deterministic (depending on $\rho(dx)$) such that the sequence of associated empirical measures $\mu^{(N)}(dx)=N^{-1}\sum_{i=1}^{N}\delta_{x_{i}}(dx)$ satisfies $W_{2}(\mu^{(N)},\rho) \le K N^{-1/d'} $ for any $ d' > d $ and for all $N$ sufficiently large, for a fixed constant $ K > 0.$
\end{tabular}
\end{definition}

\begin{remark}
Under Scenario (\hyperref[ass:random:position]{$S_{1}$}), the random positions $x_{1},\dots,x_{N}$ are interpreted as a random environment for our dynamics. This random environment is supposed to be independent of the Poisson random measures $\Pi_{i}(dz,ds) , i \geq 1. $

In Scenario (\hyperref[ass:deterministic:position]{$S_{2}$}), the bound $N^{-1/d'}$ is reasonable compared to the generic optimal quantization rate $N^{-1/d}$ \cite[Theorem 6.2]{graf2007foundations}. This bound is used afterwards to control the contribution of the spatial approximation in our spatial mean field approximation context. The construction of a sequence satisfying the claimed bound is described in Section \ref{sec:6}.
\end{remark}

In Theorems \ref{thm:1} and \ref{thm:2} below we will prove that as $ N \to \infty$,
\begin{equation}\label{eq:28}
d_{KR} ( P^{(N,N)}_{[0,T]}, P_{[0, T ] } ) \to 0, 
\end{equation}
\emph{almost surely with respect to the random environment under Scenario (\hyperref[ass:random:position]{$S_{1}$})}.

Let us now discuss briefly the properties that this limit $P_{[0,T]}$ should {\it a priori} satisfy. First, we obtain the following integrability property.

\begin{proposition}\label{prop:apriori}
Under Assumptions \ref{ass:1}, \ref{Ass:2}, \ref{ass:w:lipschitz} and either Scenario (\hyperref[ass:random:position]{$S_{1}$}) or (\hyperref[ass:deterministic:position]{$S_{2}$}), the limit measure $P_{[0, T] } $ a priori satisfies that 
\begin{equation}\label{eq:apriori}
\int_{\R^d} \int_{D([0,T],\N)}  [ \eta_T^2 + \eta_T ] P_{[0, T ]} ( d \eta , d x ) < \infty . 
\end{equation}
\end{proposition}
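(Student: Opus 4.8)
The plan is to deduce the integrability of the limit measure from the uniform-in-$N$ bounds already available in Proposition \ref{Prop:2}, by showing that the map $\eta \mapsto \eta_T + \eta_T^2$ can be approximated by bounded continuous functionals and that the relevant moments pass to the limit. First I would note that, since $P^{(N,N)}_{[0,T]} \to P_{[0,T]}$ in $d_{KR}$ (under either scenario; the convergence to be established in Theorems \ref{thm:1} and \ref{thm:2}), in particular $P^{(N,N)}_{[0,T]}$ converges weakly to $P_{[0,T]}$, $Q$-almost surely under Scenario $(S_1)$ and deterministically under $(S_2)$. The functional $\Phi_M(\eta,x) := (\eta_T \wedge M) + (\eta_T^2 \wedge M)$ is bounded and continuous on $D([0,T],\N)\times \R^d$ for each fixed $M$ (continuity holds because $\eta \mapsto \eta_T$ is continuous at every $\eta$ that does not jump at time $T$, and since $\eta$ is $\N$-valued and càdlàg, and by monotone approximation; more carefully one uses that $\eta\mapsto \eta_{T-}$ together with a.s.\ no jump at $T$ suffices, or one integrates over $[0,T']$ with $T'<T$ and lets $T'\uparrow T$). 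Hence
$$
\int \Phi_M \, dP_{[0,T]} = \lim_{N\to\infty} \int \Phi_M \, dP^{(N,N)}_{[0,T]} \leq \liminf_{N\to\infty} \frac{1}{N}\sum_{i=1}^N \E\big[ Z^{(N)}_i(T) + (Z^{(N)}_i(T))^2 \big],
$$
where I have used $\Phi_M(\eta,x)\le \eta_T+\eta_T^2$ and then, after taking expectations (legitimate since $\Phi_M$ is bounded, so $\langle \Phi_M, P^{(N,N)}_{[0,T]}\rangle$ is a bounded random variable converging a.s., and we may pass $\E$ through by dominated convergence), recognized the right-hand side as exactly the quantity controlled by Proposition \ref{Prop:2}.

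Next I would invoke Proposition \ref{Prop:2} together with the control on $\sup_j \|w_{x_j}\|_{L^1(\mu^{(N)})}$ and $\sup_j \|w_{x_j}\|_{L^2(\mu^{(N)})}$: under Assumption \ref{ass:w:lipschitz} and the convergence $\mu^{(N)} \to \rho$ (in $W_2$ under $(S_2)$, a.s.\ under $(S_1)$), these $L^1$ and $L^2$ norms are bounded uniformly in $N$ (and, under $(S_1)$, a.s.), because Remark \ref{rem:2} identifies the limiting quantities $\sup_x \int|w(y,x)|^2\rho(dy)$ and $\sup_x \int |w(y,x)|\rho(dy)$ as finite, and the Lipschitz bound \eqref{eq:lipschitz:assumption:w} together with Assumption \ref{ass:exponential:moments} (finite second moment of $\rho$) lets one replace $\mu^{(N)}$ by $\rho$ with a vanishing error. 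Consequently there is a finite constant $C = C(T,f,w,u_0,\alpha,\rho)$, not depending on $N$, such that $\frac1N\sum_{i=1}^N \E[Z^{(N)}_i(T) + (Z^{(N)}_i(T))^2] \le C$ for all $N$ large (a.s.\ under $(S_1)$). Therefore $\int \Phi_M\, dP_{[0,T]} \le C$ for every $M$.

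Finally I would let $M\to\infty$ and apply the monotone convergence theorem: $\Phi_M(\eta,x) \uparrow \eta_T + \eta_T^2$ pointwise, so
$$
\int_{\R^d}\int_{D([0,T],\N)} \big[ \eta_T^2 + \eta_T \big]\, P_{[0,T]}(d\eta,dx) = \lim_{M\to\infty} \int \Phi_M\, dP_{[0,T]} \le C < \infty,
$$
which is \eqref{eq:apriori}. The main obstacle I anticipate is the continuity/measurability point in the first step: the evaluation map $\eta\mapsto\eta_T$ is \emph{not} continuous on all of $D([0,T],\N)$ in the Skorokhod topology (it fails exactly at paths with a jump at the terminal time $T$). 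I would handle this either by working with $\eta_{T-\varepsilon}$, bounded and continuous at every path, and then letting $\varepsilon\downarrow 0$ by monotone convergence on the limit side (using that $\eta\mapsto\eta_{T}-\eta_{T-\varepsilon}\ge 0$ and $P_{[0,T]}$-a.e.\ path, being an integer-valued pure-jump path, has at most countably many jump times so that $\eta_{T-\varepsilon}\uparrow \eta_{T-}\le\eta_T$), or alternatively by arguing directly that $P_{[0,T]}$ charges no path with an atom at $T$ — which itself follows from the a priori description of the limit as governed by the finite-intensity Poisson dynamics of Section \ref{sec:3}. Either route is routine once set up; no delicate estimate beyond Proposition \ref{Prop:2} is needed.
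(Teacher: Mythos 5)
Your proposal is correct and follows essentially the same route as the paper: truncate the quadratic functional so that it becomes Lipschitz, use the $d_{KR}$-convergence of $P^{(N,N)}_{[0,T]}$ to $P_{[0,T]}$ to pass to the limit at fixed truncation level, bound the right-hand side uniformly in $N$ via Proposition \ref{Prop:2} together with the uniform control of $\sup_j\|w_{x_j}\|_{L^1(\mu^{(N)})}$ and $\sup_j\|w_{x_j}\|_{L^2(\mu^{(N)})}$, and conclude by monotone convergence in the truncation parameter. The only superfluous step is your worry about the terminal evaluation map: since every time change $\phi\in I$ in \eqref{def:skorohod_like_dist} satisfies $\phi(T)=T$, one has $|\eta_T-\xi_T|\le d_S(\eta,\xi)$, so $\eta\mapsto\eta_T$ is $1$-Lipschitz (this is exactly how the paper handles the linear term, with no truncation needed there).
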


Proposition \ref{prop:apriori} is the limit version of Proposition \ref{Prop:2}, and its proof is postponed to 
\ref{proof of Propapriori}. 

Let us now precisely define the limit measure $ P_{[0, T ]}.$ Firstly, consider any real-valued smooth test function $(\eta,x)\mapsto g(\eta,x) \equiv g(x) $  defined on $D([0,T],\N)\times \R^d$   which does not depend on the variable $\eta$. Evaluating the integral of $g$ with respect to the probability measure $P^{(N,N)}_{[0,T]}$, and then letting $N\to \infty$, one deduces that the second marginal of $P_{[0,T]}$ on $\R^d$ must be equal to the probability measure $\rho(dx)$ (since $\mu^{(N)}(dx)$ converges to $\rho(dx)$).

Since $(D([0,T],\N),d_{S})$ is Polish, it follows from the Disintegration Theorem that $P_{[0,T]}$ can be rewritten  as 
\begin{equation}\label{eq:def:limit:measure}
P_{[0,T]}(d\eta,dx)=P_{[0,T]}(d\eta|x)\rho(dx),
\end{equation} 
where $P_{[0,T]}(d\eta|x)$ denotes the conditional distribution of $P_{[0,T]}$ given the position $x\in \R^d$. Here, $x\mapsto P_{[0,T]}(d\eta|x)$  is Borel-measurable in the sense that $ x \mapsto P_{[0, T]} ( A | x ) $ is Borel-measurable for any measurable subset $ A \subset D([0,T],\N).$
From a heuristic which is explained below, the conditional distribution $P_{[0,T]}(d\eta|x)$ (for each $x\in\supp(\rho)$, the support of $\rho$) turns out to be the law of a inhomogeneous Poisson point process with intensity process  $(\lambda(t,x))_{0\leq t\leq T}$  
where $\lambda=(\lambda(t,x) , {0\leq t\leq T,x\in \R^d}) $ is solution of the nonlinear equation 
\begin{equation}
\label{def:limit_intensity}
\lambda(t,x)=f\left( e^{-\alpha t}u_{0}(x) +\int_{\R^d}w(y,x)\int_{0}^te^{-\alpha(t-s)}\lambda(s,y)ds \rho(dy)\right).
\end{equation}
The heuristic relies on the following argument: at the limit, we expect that the firing rate at time $t$ of the neurons near location $y$ should be approximately equal to $\lambda(t,y)$. Taking this into account, Equation \eqref{def:limit_intensity} is the limit version of the interaction structure of our system described in Definition \ref{def:1}. In particular, the empirical mean with respect to the positions, i.e. the integral with respect to $\mu^{(N)}(dx)$, is replaced by an integral with respect to $\rho(dx)$.

Rewriting \eqref{eq:apriori} in terms of the intensities, we obtain moreover that {\it a priori}, 
$$ \int_{\R^d }  \left[ \left( \int_0^T \lambda (t, x) dt \right)^2 +  \int_0^T \lambda (t, x) dt  \right] \rho ( dx)< \infty , $$
for each fixed $ T > 0 .$ 
Existence and uniqueness of solutions for the nonlinear equation \eqref{def:limit_intensity} is now ensured by
\begin{proposition}
\label{prop:uniqueness}
Under Assumptions \ref{ass:1}, \ref{Ass:2} and \ref{ass:w:lipschitz},  for any solution $\lambda$ of the equation \eqref{def:limit_intensity} such that $t\mapsto \int_{\R^d}\int_{0}^t\lambda(s,y)ds\rho(dy)$ and $t\mapsto \int_{\R^d}\left( \int_{0}^t\lambda(s,y)ds \right)^2 \rho(dy)$ are locally bounded,  the following assertions hold. \\
1. For any $ T > 0 , $ $\lambda\in C([0,T]\times \R^d,\R_+)$ and $\|\lambda\|_{\R^d\times [0,T],\infty} < \infty .$ \\
2. 
$ \lambda $ is Lipschitz-continuous in the space variable, that is, there exists a positive constant $C=C(f,u_0,w,\alpha,\| \lambda \|_{ [0, T] \times \R^d, \infty }, T)$ such that for all $x,y\in \R^d$ and for all $ t \le T , $ 
\begin{equation}
\label{lambda_holder_L1}
|\lambda(t,x)-\lambda(t,y)| \leq C\|x-y\|. 
\end{equation}
Furthermore, if we consider the map $F$ from $C([0,T]\times \R^d,\R_+)$ to itself defined by 
$$
\lambda \mapsto F(\lambda)(t,x)=f\left( e^{-\alpha t}u_{0}(x) +\int_{\R^d}w(y,x)\int_{0}^te^{-\alpha(t-s)}\lambda(s,y)ds \rho(dy)\right) ,
$$
then for any $\lambda, \tilde{\lambda}\in C([0,T]\times \R^d,\R_+) $, the following inequality holds  
\begin{equation}
\label{ineq:uniqueness}
\|F(\lambda)-F(\tilde{\lambda})\|_{[0,T]\times \R^d,\infty}\leq \left(1-e^{-\alpha T}\right)\alpha^{-1}L_f \sup_{x\in \R^d} \|w^x \|_{L^1 (\rho)  } \|\lambda-\tilde{\lambda}\|_{[0,T]\times \R^d ,\infty}.
\end{equation}

\end{proposition}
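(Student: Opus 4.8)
The plan is to prove the four assertions essentially in the order stated, exploiting the fact that the memory kernel is the \emph{explicit} exponential $e^{-\alpha(t-s)}$, so that no Gronwall-type bootstrap is required: once $\lambda$ is known to be bounded, continuity, spatial Lipschitz-continuity and the contraction estimate \eqref{ineq:uniqueness} are all one-line consequences of the Lipschitz hypotheses on $f$, $u_0$, $w$ and of the a priori integrability assumed for $\lambda$. So I would: (i) establish $\|\lambda\|_{[0,T]\times\R^d,\infty}<\infty$; (ii) deduce $\lambda\in C([0,T]\times\R^d,\R_+)$; (iii) prove \eqref{lambda_holder_L1}; (iv) prove \eqref{ineq:uniqueness} for $F$. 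The existence and uniqueness of a solution with the stated local-boundedness properties (announced just before the proposition) then follows: by (ii) any such $\lambda$ is a fixed point of $F$ in $C([0,T]\times\R^d,\R_+)$, and \eqref{ineq:uniqueness} makes $F$ a contraction on a short enough interval $[0,\tau]$, which one iterates over $[\tau,2\tau],\dots$ to cover $[0,T]$.

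\emph{Boundedness (the crux).} Since $f$ is $L_f$-Lipschitz, $f(v)\le f(0)+L_f|v|$, so \eqref{def:limit_intensity} gives
\[
\lambda(t,x)\le f(0)+L_f\|u_0\|_{\R^d,\infty}+L_f\int_{\R^d}|w^x(y)|\Big(\int_0^t\lambda(s,y)\,ds\Big)\rho(dy).
\]
By Cauchy--Schwarz the last integral is at most $\|w^x\|_{L^2(\rho)}\big(\int_{\R^d}(\int_0^t\lambda(s,y)\,ds)^2\rho(dy)\big)^{1/2}$; the first factor is bounded uniformly in $x$ by Remark \ref{rem:2} (i.e.\ the $x$-sections of $w$ are uniformly square integrable under Assumption \ref{ass:w:lipschitz}), and the second factor is finite and locally bounded in $t$ by the standing hypothesis on $\lambda$. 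Taking the supremum over $t\le T$ and $x\in\R^d$ yields $\|\lambda\|_{[0,T]\times\R^d,\infty}=:\Lambda_T<\infty$. I expect this to be the only genuinely delicate step, because it is the one place where the integrability of $\lambda$ and of the sections of $w$ must be combined (no uniform pointwise bound on $w$ is available); everything afterwards only uses $\Lambda_T<\infty$ and $\rho(\R^d)=1$.

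\emph{Continuity.} Write $\lambda(t,x)=f\big(e^{-\alpha t}u_0(x)+g(t,x)\big)$ with $g(t,x)=\int_{\R^d}w^x(y)\Phi(t,y)\rho(dy)$, $\Phi(t,y)=\int_0^t e^{-\alpha(t-s)}\lambda(s,y)\,ds$, so $0\le\Phi(t,y)\le\Lambda_T T$ uniformly. From $|w(y,x)-w(y,x')|\le L_w\|x-x'\|$ (Assumption \ref{ass:w:lipschitz}) one gets $|g(t,x)-g(t,x')|\le L_w\Lambda_T T\|x-x'\|$, uniformly in $t\le T$; and for $t'>t$, splitting $\int_0^{t'}=e^{-\alpha(t'-t)}\int_0^t+\int_t^{t'}$ gives $|\Phi(t',y)-\Phi(t,y)|\le(1-e^{-\alpha(t'-t)})\Lambda_T T+(t'-t)\Lambda_T$ uniformly in $y$, whence $|g(t',x)-g(t,x)|\le\big[(1-e^{-\alpha(t'-t)})\Lambda_T T+(t'-t)\Lambda_T\big]\sup_x\|w^x\|_{L^1(\rho)}\to0$, using $\sup_x\|w^x\|_{L^1(\rho)}\le\sup_x\|w^x\|_{L^2(\rho)}<\infty$. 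Hence $g$ is jointly continuous, $e^{-\alpha t}u_0(x)$ is continuous (Assumption \ref{Ass:2}), and composing with continuous $f$ gives assertion 1.

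\emph{Spatial Lipschitz-continuity and contraction.} For assertion 2, using $|w(z,x)-w(z,y)|\le L_w\|x-y\|$, $|u_0(x)-u_0(y)|\le L_{u_0}\|x-y\|$ and $\Phi(t,z)\le\Lambda_T T$,
\[
|\lambda(t,x)-\lambda(t,y)|\le L_f\Big(e^{-\alpha t}|u_0(x)-u_0(y)|+\int_{\R^d}|w(z,x)-w(z,y)|\Phi(t,z)\rho(dz)\Big)\le L_f(L_{u_0}+L_w\Lambda_T T)\|x-y\|,
\]
which is \eqref{lambda_holder_L1} with $C=C(f,u_0,w,\alpha,\Lambda_T,T)$. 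For \eqref{ineq:uniqueness}, given $\lambda,\tilde\lambda\in C([0,T]\times\R^d,\R_+)$, Lipschitzness of $f$ and $\int_0^t e^{-\alpha(t-s)}\,ds=\frac{1-e^{-\alpha t}}{\alpha}$ give $|F(\lambda)(t,x)-F(\tilde\lambda)(t,x)|\le L_f\,\frac{1-e^{-\alpha t}}{\alpha}\,\|w^x\|_{L^1(\rho)}\,\|\lambda-\tilde\lambda\|_{[0,T]\times\R^d,\infty}$; taking the supremum over $t\le T$, $x\in\R^d$ and using $\frac{1-e^{-\alpha t}}{\alpha}\le\frac{1-e^{-\alpha T}}{\alpha}$ (with the convention that this equals $T$ when $\alpha=0$) yields \eqref{ineq:uniqueness}. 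Finally, combining (ii) with \eqref{ineq:uniqueness} and the Banach fixed point theorem, applied first on $[0,\tau]$ with $\tau$ small enough that $(1-e^{-\alpha\tau})\alpha^{-1}L_f\sup_x\|w^x\|_{L^1(\rho)}<1$ and then iterated, gives existence and uniqueness of the solution to \eqref{def:limit_intensity} in the stated class.
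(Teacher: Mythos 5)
Your proposal is correct and follows essentially the same route as the paper's own proof: boundedness via $f(v)\le f(0)+L_f|v|$, $e^{-\alpha(t-s)}\le 1$ and Cauchy--Schwarz against $\sup_x\|w^x\|_{L^2(\rho)}<\infty$ (Remark \ref{rem:2}), then continuity, the spatial Lipschitz bound and the contraction estimate all as direct consequences of the Lipschitz hypotheses and the boundedness of $\lambda$. The only (harmless) differences are cosmetic constants in \eqref{lambda_holder_L1} and your explicit appended fixed-point iteration, which the paper states separately after the proposition.
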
 
The proof of Proposition \ref{prop:uniqueness} is postponed to 
\ref{proof of Prop.3}. The inequality \eqref{ineq:uniqueness} together with a classical fixed point argument imply not only the existence but also the uniqueness of a solution of the equation \eqref{def:limit_intensity}.

We are now in position to state the two main results of the present paper.

\begin{thm}
\label{thm:1}
Under Assumptions \ref{ass:1}--\ref{ass:w:lipschitz} and Scenario (\hyperref[ass:random:position]{$S_{1}$}),  there exists a positive constant $C = C(T,f,w,u_{0},\alpha,\beta )$ and a random variable $N_{0} $ depending only on the realization of the random positions $ X_1, \ldots, X_N, \ldots, $ such that for all $N\geq N_{0}$,
\begin{equation}
\label{bound_on_the_distance}
{d_{KR}\left(P^{(N,N)}_{[0,T]},P_{[0,T]}\right)} \leq C   \left( N^{- 1/2 } +   W_2 ( \mu^{(N)} , \rho ) \right).
\end{equation}
Moreover, if $\|w\|_{\R^d\times \R^d,\infty} < \infty, $ then $ N_{0}=1$ is valid.
Furthermore, for any fixed $ d' > d, $ it holds that  
\begin{equation}
\label{eq:secondbound} {d_{KR}\left(P^{(N,N)}_{[0,T]},P_{[0,T]}\right)}  \le C  N^{ - \frac{1}{ 4 + d'}} 
\end{equation}
eventually almost surely as $ N \to \infty. $ 
\end{thm}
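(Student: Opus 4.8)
The plan is to prove \eqref{bound_on_the_distance} by a coupling argument and then to deduce \eqref{eq:secondbound} by invoking a quantitative rate of convergence of empirical measures in Wasserstein distance. I work on the space carrying the i.i.d.\ Poisson random measures $\Pi_i(dz,ds)$ of Definition \ref{def:2} and, alongside $Z^{(N)}_i$, I introduce for each $i$ the inhomogeneous Poisson process
\[
\bar Z_i(t)=\int_0^t\int_0^\infty\one_{\{z\le\lambda(s,x_i)\}}\Pi_i(dz,ds),
\]
where $\lambda$ is the unique solution of \eqref{def:limit_intensity} given by Proposition \ref{prop:uniqueness}. Under Scenario (\hyperref[ass:random:position]{$S_1$}) the $x_i$ are i.i.d.\ $\rho$ and independent of the $\Pi_i$, so the pairs $((\bar Z_i(t))_{0\le t\le T},x_i)$ are i.i.d.\ with law $P_{[0,T]}$; but since the positions are frozen in $d_{KR}$, conditionally on them the $\bar Z_i$ are independent with $\bar Z_i$ of law $P_{[0,T]}(\cdot\,|\,x_i)$. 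Setting $\bar P^{(N)}_{[0,T]}=\tfrac1N\sum_i\delta_{((\bar Z_i(t))_{0\le t\le T},x_i)}$, I split
\[
d_{KR}(P^{(N,N)}_{[0,T]},P_{[0,T]})\le d_{KR}(P^{(N,N)}_{[0,T]},\bar P^{(N)}_{[0,T]})+d_{KR}(\bar P^{(N)}_{[0,T]},P_{[0,T]}).
\]

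For the first term, write $\bar U(t,x)=e^{-\alpha t}u_0(x)+\int w(y,x)\int_0^te^{-\alpha(t-s)}\lambda(s,y)\,ds\,\rho(dy)$, so that $\lambda(t,x)=f(\bar U(t,x))$. Since both counting processes are driven by the same $\Pi_i$, the number $D_i(t)$ of times in $[0,t]$ at which exactly one of them jumps dominates $|Z^{(N)}_i(s)-\bar Z_i(s)|$ for all $s\le t$, hence $d_S((Z^{(N)}_i)_{\le T},(\bar Z_i)_{\le T})\le D_i(T)$; moreover $D_i$ has intensity $|f(U^{(N)}_i(s-))-\lambda(s,x_i)|$, so by Assumption \ref{ass:1}, $E[D_i(t)]\le L_f\int_0^tE[|U^{(N)}_i(s-)-\bar U(s,x_i)|]\,ds$, these quantities being finite by Propositions \ref{Prop:2} and \ref{prop:uniqueness}. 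I then decompose $U^{(N)}_i(t)-\bar U(t,x_i)=A_i(t)+B_i(t)+C_i(t)$ into the coupling error $A_i(t)=\tfrac1N\sum_jw(x_j,x_i)\int_0^te^{-\alpha(t-s)}(dZ^{(N)}_j(s)-d\bar Z_j(s))$, the fluctuation $B_i(t)=\tfrac1N\sum_jw(x_j,x_i)\int_0^te^{-\alpha(t-s)}(d\bar Z_j(s)-\lambda(s,x_j)ds)$, and the spatial error $C_i(t)=\int_0^te^{-\alpha(t-s)}\langle w^{x_i}\lambda(s,\cdot),\mu^{(N)}-\rho\rangle\,ds$. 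One bounds $E|A_i(t)|\le\big(\sup_i\|w^{x_i}\|_{L^1(\mu^{(N)})}\big)\sup_jE[D_j(t)]$; for $B_i(t)$, independence of the $\bar Z_j$ across $j$ together with the fact that each has compensator $\int\lambda(\cdot,x_j)$ gives $E[B_i(t)^2]\le N^{-1}T\|\lambda\|_{\infty}\|w^{x_i}\|^2_{L^2(\mu^{(N)})}$, whence $E|B_i(t)|\le C(T)N^{-1/2}\sup_i\|w^{x_i}\|_{L^2(\mu^{(N)})}$; and for $C_i(t)$, since by Proposition \ref{prop:uniqueness} the function $y\mapsto w(y,x_i)\lambda(s,y)$ is Lipschitz with a local Lipschitz constant growing at most linearly in $\|y\|$, transporting along the optimal $W_2$-coupling of $\mu^{(N)}$ and $\rho$ and applying Cauchy--Schwarz yields $|C_i(t)|\le C\big(1+m_2(\mu^{(N)})^{1/2}\big)W_2(\mu^{(N)},\rho)$, where $m_2$ denotes the second moment. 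Plugging these bounds into the inequality for $E[D_i(t)]$, taking the supremum over $i$ and applying Grönwall's lemma gives $\sup_iE[D_i(T)]\le C(N^{-1/2}+W_2(\mu^{(N)},\rho))$, so that $d_{KR}(P^{(N,N)}_{[0,T]},\bar P^{(N)}_{[0,T]})\le\tfrac1N\sum_iE[D_i(T)]$ is controlled by the same quantity.

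For the second term I note that for $g\in Lip_1$ one has $E_Q[\langle g,\bar P^{(N)}_{[0,T]}\rangle]=\langle\tilde g,\mu^{(N)}\rangle$ with $\tilde g(x)=\int g(\eta,x)P_{[0,T]}(d\eta|x)$, so $|E_Q\langle g,\bar P^{(N)}_{[0,T]}-P_{[0,T]}\rangle|=|\langle\tilde g,\mu^{(N)}-\rho\rangle|$; coupling the Poisson processes of intensities $\lambda(\cdot,x)$ and $\lambda(\cdot,x')$ through a common $\Pi$ and using the spatial Lipschitz estimate \eqref{lambda_holder_L1} shows that $\tilde g$ is Lipschitz with a constant depending only on the model parameters, whence by Kantorovich--Rubinstein duality $|\langle\tilde g,\mu^{(N)}-\rho\rangle|\le CW_1(\mu^{(N)},\rho)\le CW_2(\mu^{(N)},\rho)$. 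The remaining fluctuation $\langle g,\bar P^{(N)}_{[0,T]}\rangle-E_Q\langle g,\bar P^{(N)}_{[0,T]}\rangle$ is an average of $N$ independent centred terms, and since $\mathrm{Var}_Q\big(g((\bar Z_i)_{\le T},x_i)\big)\le E_Q[\bar Z_i(T)^2]$ is controlled by the first moments of the $x_i$ (uniformly when $\|w\|_\infty<\infty$), one gets $E_Q|\langle g,\bar P^{(N)}_{[0,T]}-E_Q\bar P^{(N)}_{[0,T]}\rangle|\le CN^{-1/2}$ uniformly in $g\in Lip_1$. Combining the two terms yields \eqref{bound_on_the_distance}, the constant $C$ depending on the model parameters and on $\sup_Nm_1(\mu^{(N)})$, $\sup_Nm_2(\mu^{(N)})$ and $\sup_{i,N}\|w^{x_i}\|_{L^2(\mu^{(N)})}$: under (\hyperref[ass:random:position]{$S_1$}) these are almost surely finite by the strong law of large numbers and Assumption \ref{ass:exponential:moments}, which gives the random threshold $N_0$, while if $\|w\|_{\R^d\times\R^d,\infty}<\infty$ they are bounded by explicit functions of $\|w\|_\infty$, so $N_0=1$. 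Finally \eqref{eq:secondbound} follows by inserting into \eqref{bound_on_the_distance}, for any fixed $d'>d$, the almost sure bound $W_2(\mu^{(N)},\rho)\le N^{-1/(4+d')}$ valid for $N$ large (a consequence of Assumption \ref{ass:exponential:moments} and the classical rates of convergence of empirical measures in $W_2$, since $4+d'>\max(d,4)$), together with $N^{-1/2}\le N^{-1/(4+d')}$.

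The main obstacle is the unboundedness of $w$. It forces the entire argument to be carried out with $W_2$ rather than $W_1$ and makes the spatial error $C_i(t)$ and the fluctuation terms delicate, because products such as $w(y,x)\lambda(s,y)$ only possess Lipschitz constants growing linearly in $\|y\|$; this is exactly what makes the exponential moment Assumption \ref{ass:exponential:moments} and the a priori second-moment estimate of Proposition \ref{Prop:2} indispensable, and what produces the random threshold $N_0$ in the general case.
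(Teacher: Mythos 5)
Your overall route is essentially the paper's: the same Poisson-measure coupling between $Z^{(N)}_i$ and $\bar Z_i$, the same three-term decomposition of $U^{(N)}_i-\bar U(\cdot,x_i)$ into coupling error, martingale fluctuation and spatial bias followed by Gr\"onwall (Theorem \ref{thm:direct:coupling}), the same $W_1\le W_2$ control of the bias via the spatial Lipschitz estimate \eqref{lambda_holder_L1} (Proposition \ref{prop:last:control}), and the same empirical-measure Wasserstein rate to conclude \eqref{eq:secondbound}.

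There is, however, one genuine gap, and it sits exactly at the point you yourself identify as the main obstacle. You claim that $\sup_{1\le j\le N}\|w_{x_j}\|_{L^2(\mu^{(N)})}$ (and its $L^1$ analogue) is almost surely bounded uniformly in $N$ ``by the strong law of large numbers and Assumption \ref{ass:exponential:moments}''. This does not follow. The quantity $\|w_{x_j}\|^2_{L^2(\mu^{(N)})}=N^{-1}\sum_i w(x_j,x_i)^2$ must be controlled \emph{uniformly over the $N$ random sections} $w_{x_j}$, $j\le N$; the SLLN gives, for each \emph{fixed} $y$, that $\|w_y\|^2_{L^2(\mu^{(N)})}\to\|w_y\|^2_{L^2(\rho)}$ a.s., but not uniformly over $y$ ranging in a set growing with $N$, and the crude Lipschitz bound only yields $\sup_j\|w_{x_j}\|^2_{L^2(\mu^{(N)})}\le C\bigl(1+\max_{j\le N}\|X_j\|^2+N^{-1}\sum_i\|X_i\|^2\bigr)$, whose first term diverges (of order $(\log N)^2$ under exponential moments). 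The paper closes this gap with Lemma \ref{lem:control:L2:by:W1:compact:support}, the quantitative uniform comparison $\sup_{\|y\|\le R}\bigl|\,\|w_y\|^2_{L^2(\mu^{(N)})}-\|w_y\|^2_{L^2(\rho)}\bigr|\le C(1+R)\,W_1(\mu^{(N)},\rho)+C\mathcal{E}_\beta e^{-\beta' R}$, combined with two Borel--Cantelli arguments: one giving the a.s.\ rate $W_2(\mu^{(N)},\rho)\le CN^{-1/(4+d')}$ from the Bolley--Guillin--Villani \emph{deviation inequality} (the in-expectation rates you invoke are not enough for an eventually-a.s.\ statement), and one showing that eventually all positions lie in $B(0^d,N^{\gamma})$ for $\gamma$ chosen small relative to $1/(4+d')$. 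Only this combination transfers the uniform square-integrability \eqref{eq:assumption:w:2} of the sections of $w$ from $\rho$ to $\mu^{(N)}$, and it is precisely what produces the random threshold $N_0$ (and why $N_0=1$ when $w$ is bounded). The rest of your argument is sound and matches the proof in Sections \ref{sec:4} and \ref{sec:5}.
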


\begin{thm}
\label{thm:2}
Under Assumptions \ref{ass:1}--\ref{ass:w:lipschitz} and Scenario (\hyperref[ass:deterministic:position]{$S_{2}$}), for each $T>0$, there exists a positive constant $C = C(T,f,w,u_{0},\alpha,\beta )$ such that for all $N\in \N$,
\begin{equation}
\label{eq:bound_on_the_distance:deterministic}
{d_{KR}\left(P^{(N,N)}_{[0,T]},P_{[0,T]}\right)} \leq C   \left( N^{- 1/2 } +   W_2 ( \mu^{(N)} , \rho ) \right).
\end{equation}
Furthermore, for any fixed $ d' >2\vee d, $ it holds that  
\begin{equation}
\label{eq:secondbound:deterministic} {d_{KR}\left(P^{(N,N)}_{[0,T]},P_{[0,T]}\right)}  \le C  N^{-\frac{1}{d'}}
\end{equation}
eventually as $N\to\infty$.
\end{thm}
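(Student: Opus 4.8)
The plan is to prove Theorem~\ref{thm:2} by a coupling argument: we compare the finite system with a family of independent inhomogeneous Poisson processes attached to the same positions and driven by the \emph{same} Poisson random measures, and then control separately a ``mean field/Grönwall'' error, a ``fluctuation'' error, and a ``spatial discretization'' error. Theorem~\ref{thm:1} will follow from the same estimates run conditionally on the random environment $x_1,\dots,x_N$.

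\textbf{Step 1 (coupling and splitting).} Let $\lambda$ be the unique solution of \eqref{def:limit_intensity} given by Proposition~\ref{prop:uniqueness} and set $u(t,x)=e^{-\alpha t}u_0(x)+\int_{\R^d}w(y,x)\int_0^te^{-\alpha(t-s)}\lambda(s,y)\,ds\,\rho(dy)$, so $\lambda(t,x)=f(u(t,x))$. Using the $\Pi_i$ of Definition~\ref{def:2}, put $\bar Z_i(t)=\int_0^t\int_0^\infty\one_{\{z\le\lambda(s,x_i)\}}\Pi_i(dz,ds)$; since $\lambda$ is deterministic and bounded (Proposition~\ref{prop:uniqueness}) and the $\Pi_i$ are i.i.d., the $\bar Z_i$ are independent inhomogeneous Poisson processes with intensities $\lambda(\cdot,x_i)$, and $(\bar Z_i,x_i)$ has law $P_{[0,T]}(d\eta\,|\,x_i)\otimes\delta_{x_i}$. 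Introduce $\bar P^{(N)}_{[0,T]}=N^{-1}\sum_i\delta_{(\bar Z_i,x_i)}$ and its deterministic version $\tilde P^{(N)}_{[0,T]}(d\eta,dx)=N^{-1}\sum_iP_{[0,T]}(d\eta\,|\,x_i)\delta_{x_i}(dx)$, and split
$$d_{KR}(P^{(N,N)}_{[0,T]},P_{[0,T]})\le d_{KR}(P^{(N,N)}_{[0,T]},\bar P^{(N)}_{[0,T]})+d_{KR}(\bar P^{(N)}_{[0,T]},\tilde P^{(N)}_{[0,T]})+d_{KR}(\tilde P^{(N)}_{[0,T]},P_{[0,T]}).$$

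\textbf{Step 2 (coupling error and Grönwall).} Let $D_i(t)$ count the $\Pi_i$-points lying between $f(U^{(N)}_i(s-))\wedge\lambda(s,x_i)$ and $f(U^{(N)}_i(s-))\vee\lambda(s,x_i)$. Then $\|Z^{(N)}_i-\bar Z_i\|_{[0,T],\infty}\le D_i(T)$, so $d_S(Z^{(N)}_i,\bar Z_i)\le D_i(T)$ (take $\phi=\mathrm{id}$ in \eqref{def:skorohod_like_dist}) and $d_{KR}(P^{(N,N)}_{[0,T]},\bar P^{(N)}_{[0,T]})\le N^{-1}\sum_iE[D_i(T)]$. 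By the form of the compensator and Assumption~\ref{ass:1}, $E[D_i(T)]\le L_f\int_0^TE|U^{(N)}_i(s)-u(s,x_i)|\,ds$. Decompose $U^{(N)}_i(s)-u(s,x_i)$ as the sum of: (i) a coupling term $N^{-1}\sum_jw(x_j,x_i)\int_0^se^{-\alpha(s-r)}(dZ^{(N)}_j-d\bar Z_j)$, bounded by $N^{-1}\sum_j|w(x_j,x_i)|D_j(s)$; (ii) a martingale term $N^{-1}\sum_jw(x_j,x_i)\int_0^se^{-\alpha(s-r)}(d\bar Z_j(r)-\lambda(r,x_j)dr)$, whose $L^2$-norm is at most $N^{-1}(T\|\lambda\|_\infty\sum_jw(x_j,x_i)^2)^{1/2}$ by independence and the Itô isometry; and (iii) a spatial term $\int_0^se^{-\alpha(s-r)}\int_{\R^d}w(y,x_i)\lambda(r,y)(\mu^{(N)}-\rho)(dy)\,dr$. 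Setting $\Delta(s)=N^{-1}\sum_iE|U^{(N)}_i(s)-u(s,x_i)|$, averaging over $i$ and applying Cauchy--Schwarz gives $\Delta(s)\le C_1\int_0^s\Delta(r)\,dr+C_2N^{-1/2}+C_3W_2(\mu^{(N)},\rho)$, hence by Grönwall $\sup_{s\le T}\Delta(s)\le C(N^{-1/2}+W_2(\mu^{(N)},\rho))$ and $d_{KR}(P^{(N,N)}_{[0,T]},\bar P^{(N)}_{[0,T]})\le C(N^{-1/2}+W_2(\mu^{(N)},\rho))$. The delicate point is that $C_1,C_2,C_3$ involve $\sup_j\|w_{x_j}\|_{L^1(\mu^{(N)})}$, $\iint|w(y,x)|^2\mu^{(N)}(dy)\mu^{(N)}(dx)$ and $\int\|x\|^2\mu^{(N)}(dx)$, which are finite for each $N$ by Proposition~\ref{Prop:2} and, crucially, \emph{uniformly bounded} as $N\to\infty$; this follows from the linear growth of $w$ obtained from \emph{both} halves of Assumption~\ref{ass:w:lipschitz} (cf. Remark~\ref{rem:2}), the exponential moments of $\rho$ (Assumption~\ref{ass:exponential:moments}) and $W_2(\mu^{(N)},\rho)\to0$. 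For term (iii) one cannot invoke Kantorovich--Rubinstein duality directly, since $y\mapsto w(y,x_i)\lambda(r,y)$ is only Lipschitz with \emph{linear} growth; instead one takes an optimal $W_2$-coupling $(Y,Y')$ of $(\mu^{(N)},\rho)$ and estimates $|E[w(Y,x_i)\lambda(r,Y)-w(Y',x_i)\lambda(r,Y')]|\le(C+C(E\|Y'\|^2)^{1/2})W_2(\mu^{(N)},\rho)$, using $\|\lambda\|_\infty<\infty$ and the spatial Lipschitz bound of Proposition~\ref{prop:uniqueness}.

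\textbf{Step 3 (fluctuation and spatial errors, conclusion, and Theorem~\ref{thm:1}).} For fixed $g\in Lip_1$, $\langle g,\bar P^{(N)}_{[0,T]}-\tilde P^{(N)}_{[0,T]}\rangle=N^{-1}\sum_i(g(\bar Z_i,x_i)-E[g(\bar Z_i,x_i)])$ is an average of independent centred variables, each of variance $\le2E[\bar Z_i(T)^2]<\infty$ (via $d_S(\eta,\xi)\le\eta_T+\xi_T$ and $\bar Z_i(T)\sim\mathrm{Poisson}(\le T\|\lambda\|_\infty)$), whence $d_{KR}(\bar P^{(N)}_{[0,T]},\tilde P^{(N)}_{[0,T]})\le CN^{-1/2}$. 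Since $\tilde P^{(N)}_{[0,T]}$ is deterministic, $\langle g,\tilde P^{(N)}_{[0,T]}-P_{[0,T]}\rangle=\int_{\R^d}G_g(x)(\mu^{(N)}-\rho)(dx)$ with $G_g(x)=\int g(\eta,x)P_{[0,T]}(d\eta\,|\,x)$; coupling the Poisson processes of intensities $\lambda(\cdot,x)$ and $\lambda(\cdot,x')$ and using Proposition~\ref{prop:uniqueness} shows $G_g$ is Lipschitz (with constant independent of $g$) and of linear growth, so $|\langle g,\tilde P^{(N)}_{[0,T]}-P_{[0,T]}\rangle|\le CW_1(\mu^{(N)},\rho)\le CW_2(\mu^{(N)},\rho)$. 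Adding the three bounds yields \eqref{eq:bound_on_the_distance:deterministic}; under Scenario~(\hyperref[ass:deterministic:position]{$S_2$}), $W_2(\mu^{(N)},\rho)\le KN^{-1/d'}$ for $N$ large and any $d'>d$, and $N^{-1/d'}$ dominates $N^{-1/2}$ once $d'>2$, giving \eqref{eq:secondbound:deterministic} for $d'>2\vee d$ (the construction of such positions is deferred to Section~\ref{sec:6}). Theorem~\ref{thm:1} is obtained by running the same estimates conditionally on the i.i.d.\ environment: the environment-dependent constants are a.s.\ finite and converge by the strong law of large numbers, hence are a.s.\ bounded for $N\ge N_0$ — and bounded for \emph{all} $N$ when $\|w\|_\infty<\infty$, so $N_0=1$ there — yielding \eqref{bound_on_the_distance}; the sharpened rate \eqref{eq:secondbound} then follows from standard bounds on $E[W_2(\mu^{(N)},\rho)]$ for empirical measures of i.i.d.\ samples together with a Borel--Cantelli argument.

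\textbf{Main obstacle.} The crux is Step~2: arranging the Grönwall estimate so that all constants are controlled \emph{uniformly in $N$} despite the unboundedness of $w$ (this is precisely where both parts of Assumption~\ref{ass:w:lipschitz} and the exponential moment of $\rho$ are needed), and handling the discretized interaction term $\int w(y,x_i)\lambda(r,y)(\mu^{(N)}-\rho)(dy)$ through $W_2$ rather than $W_1$ because the integrand is only Lipschitz with linear growth.
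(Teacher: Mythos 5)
Your architecture is the paper's: the same coupling through shared Poisson random measures, the same three error sources (the paper merges your first two terms into Theorem \ref{thm:direct:coupling}, whose proof splits them back into exactly your coupling term $A^N(T)$ and fluctuation term $B^N(T)$), the same $F/G/H$ decomposition of $U^{(N)}_i-u(\cdot,x_i)$ with Gr\"onwall, the same optimal-$W_2$-coupling trick for the discretized interaction term (the paper's \eqref{eq:lal}), and the same canonical coupling of limit Poisson processes to show $x\mapsto \int g(\eta,x)P_{[0,T]}(d\eta|x)$ is Lipschitz (Proposition \ref{prop:last:control}). So far, so good.

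There is, however, one genuine gap, precisely at the point you flag as ``delicate'' and then dispose of too quickly. You assert that $\sup_j\|w_{x_j}\|_{L^1(\mu^{(N)})}$ and $\sup_j\|w_{x_j}\|_{L^2(\mu^{(N)})}$ are uniformly bounded in $N$ because of the linear growth of $w$, the exponential moments of $\rho$, and $W_2(\mu^{(N)},\rho)\to 0$. This does not follow. Linear growth gives $\|w_{x_j}\|^2_{L^2(\mu^{(N)})}\le C\bigl(1+\|x_j\|^2+\int\|x\|^2\mu^{(N)}(dx)\bigr)$; the integral converges by $W_2$-convergence, but $\sup_j\|x_j\|$ is \emph{not} controlled by $W_2(\mu^{(N)},\rho)\le KN^{-1/d'}$. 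A single position at distance of order $N^{1/2-1/d'}$ from the origin carries mass $1/N$ and is fully compatible with that rate, yet it makes $\sup_j\|w_{x_j}\|_{L^1(\mu^{(N)})}$ grow polynomially, and hence the Gr\"onwall factor $\exp\bigl(T\sup_j\|w_{x_j}\|_{L^1(\mu^{(N)})}\bigr)$ blow up. The paper closes this gap in two steps that your proposal omits: (i) Lemma \ref{lem:control:L2:by:W1:compact:support}, a truncation argument giving $\sup_{\|y\|\le r}\bigl|\,\|w_y\|^2_{L^2(\mu^{(N)})}-\|w_y\|^2_{L^2(\rho)}\bigr|\le C(1+r)W_1(\mu^{(N)},\rho)+C\mathcal{E}_\beta e^{-\beta' r}$, valid only when $\mu^{(N)}$ is supported in $B(0^d,r)$; and (ii) the explicit quantization construction of Section \ref{sec:6}, which produces positions supported in $B(0^d,r_N)$ with $r_N=N^{\varepsilon}$, $0<\varepsilon<1/(d+2)$, so that $(1+r_N)W_1(\mu^{(N)},\rho)\to 0$ and \eqref{eq:uniform:control:discrete:L2:deterministic} holds. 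In other words, the support control of $\mu^{(N)}$ is an essential ingredient (for unbounded $w$), not a detail that can be deferred; without it your constants $C_1,C_2,C_3$ are not uniform in $N$ and \eqref{eq:bound_on_the_distance:deterministic} does not follow from the Gr\"onwall step. The rest of your argument, including the reduction of \eqref{eq:secondbound:deterministic} to $W_2(\mu^{(N)},\rho)\le KN^{-1/d'}$ with $d'>2\vee d$, is sound.
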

The proofs of Theorems \ref{thm:1} and \ref{thm:2} are respectively postponed to Sections \ref{sec:5} and \ref{sec:6}. Here are given two corollaries that are valid under either Scenario (\hyperref[ass:random:position]{$S_{1}$}) or Scenario (\hyperref[ass:deterministic:position]{$S_{2}$}).

\begin{corollary}\label{cor:propagation:of:chaos}
1. For any bounded functions $g,\tilde{g}\in Lip_1$,
\begin{multline*}
\left|E\left[\left\langle g, P^{(N,N)}_{[0,T]}\right\rangle \left\langle \tilde{g}, P^{(N,N)}_{[0,T]}\right\rangle \right]-\left\langle g, P_{[0,T]}\right\rangle \left\langle \tilde{g}, P_{[0,T]}\right\rangle\right|\\
\leq \left( \|g\|_{\infty} + \|\tilde{g}\|_{\infty} \right) d_{KR}\left(P^{(N,N)}_{[0,T]},P_{[0,T]}\right).
\end{multline*}
2. Suppose moreover that $\rho$ admits a $\mathcal{C}^{1}$ density, denoted by $f_{\rho}$, with respect to the Lebesgue measure. Then for any $1$-Lipschitz bounded functions $\varphi,\tilde{\varphi}:D([0,T],\N) \to [-1,1]$ and any $x,\tilde{x}\in \mathbb{R}^{d}$ such that $f_{\rho}(x)\neq 0$ and $f_{\rho}(\tilde{x})\neq 0$, there exist kernel functions $ \Phi_N ( z) , \tilde \Phi_N ( z) $ with $ \Phi_N ( z)  f_\rho ( z) dz \stackrel{w}{\to } \delta_x (dz) $ and $ \tilde \Phi_N ( z)  f_\rho ( z) dz \stackrel{w}{\to } \delta_{\tilde{x}} (dz) ,$ as $N \to \infty , $ such that for 
$$ g_N (\eta, z)  = \varphi (\eta) \Phi_N ( z)  \mbox{ and }  \tilde g_N (\eta, z) = \tilde \varphi (\eta ) \tilde \Phi_N ( z) , $$
we have
\begin{equation}
\label{Propagation_of_chaos}
E\left[\left\langle g_{N}, P^{(N,N)}_{[0,T]}\right\rangle \left\langle \tilde{g}_{N}, P^{(N,N)}_{[0,T]}\right\rangle \right] \to \left< \varphi, P_{[0,T]}(\cdot|x) \right> \left< \tilde{\varphi}, P_{[0,T]}(\cdot|\tilde{x}) \right>.
\end{equation}
\end{corollary}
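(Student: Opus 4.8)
For part~1, the plan is to exploit bilinearity and the defining property of $d_{KR}$. Writing $a_N = \langle g, P^{(N,N)}_{[0,T]}\rangle$, $a = \langle g, P_{[0,T]}\rangle$, $b_N = \langle \tilde g, P^{(N,N)}_{[0,T]}\rangle$, $b = \langle \tilde g, P_{[0,T]}\rangle$, one decomposes $a_N b_N - ab = a_N(b_N - b) + (a_N - a)b$. Since $g, \tilde g$ take values in $[-\|g\|_\infty, \|g\|_\infty]$ (resp.\ for $\tilde g$), the random variables $a_N, b$ are bounded by $\|g\|_\infty$ and $\|\tilde g\|_\infty$ respectively (the $P$'s being probability measures). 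Hence
$$
\left| E[a_N b_N] - ab \right| \le \|g\|_\infty\, E|b_N - b| + \|\tilde g\|_\infty\, E|a_N - a|.
$$
Because $g/\|g\|_\infty$ and similar normalizations are in $Lip_1$ only up to the Lipschitz constant, one instead observes directly that $E|a_N - a| = E|\langle g, P^{(N,N)}_{[0,T]} - P_{[0,T]}\rangle| \le d_{KR}(P^{(N,N)}_{[0,T]}, P_{[0,T]})$ whenever $g \in Lip_1$, and likewise for $\tilde g$; this gives the claimed bound. The only subtlety is that $ab$ is deterministic, so $E[a_N b] = (E a_N) b$ and one does not even need independence; this step is routine.

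For part~2, the strategy is to combine part~1 with a mollification argument that turns the conditional laws $P_{[0,T]}(\cdot|x)$, $P_{[0,T]}(\cdot|\tilde x)$ into genuine test functions on $\R^d$. First I would choose an approximate identity: fix a smooth compactly supported kernel $K \ge 0$ with $\int K = 1$ and set $\Phi_N(z) = \varepsilon_N^{-d} K((z-x)/\varepsilon_N)/f_\rho(x)$ with $\varepsilon_N \to 0$ slowly, so that $\Phi_N(z) f_\rho(z)\,dz \xrightarrow{w} \delta_x(dz)$; the division by $f_\rho(x)\neq 0$ and continuity of $f_\rho$ (its $\mathcal C^1$ regularity) ensure the normalization is asymptotically correct. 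Then $g_N(\eta,z) = \varphi(\eta)\Phi_N(z)$ satisfies $\langle g_N, P_{[0,T]}\rangle = \int \varphi(\eta)\Phi_N(z) P_{[0,T]}(d\eta|z)\rho(dz) \to \langle \varphi, P_{[0,T]}(\cdot|x)\rangle$, using weak convergence together with continuity in $z$ of the map $z \mapsto \langle\varphi, P_{[0,T]}(\cdot|z)\rangle$ — which follows from the Lipschitz continuity in space of the limit intensity $\lambda(t,\cdot)$ established in Proposition~\ref{prop:uniqueness}, since $P_{[0,T]}(\cdot|z)$ is the law of a Poisson process with intensity $\lambda(\cdot,z)$ and the map $z \mapsto \lambda(\cdot,z)$ is continuous in sup-norm, hence $z \mapsto P_{[0,T]}(\cdot|z)$ is continuous for the topology induced by bounded Lipschitz $\varphi$. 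The same holds for $\tilde g_N$ and $\tilde x$, so $\langle g_N, P_{[0,T]}\rangle\langle\tilde g_N, P_{[0,T]}\rangle \to \langle\varphi, P_{[0,T]}(\cdot|x)\rangle\langle\tilde\varphi, P_{[0,T]}(\cdot|\tilde x)\rangle$.

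The remaining point is to pass from $E[\langle g_N, P^{(N,N)}_{[0,T]}\rangle\langle\tilde g_N, P^{(N,N)}_{[0,T]}\rangle]$ to $\langle g_N, P_{[0,T]}\rangle\langle\tilde g_N, P_{[0,T]}\rangle$, for which part~1 gives
$$
\left| E[\langle g_N, P^{(N,N)}_{[0,T]}\rangle\langle\tilde g_N, P^{(N,N)}_{[0,T]}\rangle] - \langle g_N, P_{[0,T]}\rangle\langle\tilde g_N, P_{[0,T]}\rangle \right| \le (\|g_N\|_\infty + \|\tilde g_N\|_\infty)\, d_{KR}(P^{(N,N)}_{[0,T]}, P_{[0,T]}),
$$
after first rescaling $g_N, \tilde g_N$ so that they are $1$-Lipschitz (their Lipschitz constant in $z$ is of order $\varepsilon_N^{-d-1}$; dividing by it and tracking the constant is harmless provided we also account for it on the right). \textbf{The main obstacle} is precisely this competition of rates: $d_{KR}(P^{(N,N)}_{[0,T]}, P_{[0,T]})$ decays like $N^{-1/2} + W_2(\mu^{(N)},\rho)$ (Theorems~\ref{thm:1}--\ref{thm:2}), while the Lipschitz constant of $\Phi_N$ blows up polynomially in $\varepsilon_N^{-1}$; one must choose $\varepsilon_N \to 0$ slowly enough that $\varepsilon_N^{-(d+1)}\bigl(N^{-1/2} + W_2(\mu^{(N)},\rho)\bigr) \to 0$ yet fast enough that $\Phi_N f_\rho\,dz \to \delta_x$ — such a choice exists since the $d_{KR}$ bound is an explicit negative power of $N$, so picking $\varepsilon_N = N^{-\kappa}$ for a suitably small $\kappa>0$ works. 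Once the rates are reconciled, a triangle inequality combining the two displays above yields \eqref{Propagation_of_chaos}. I would also remark that although the statement fixes the kernels abstractly, in the proof they must be chosen as above \emph{after} the $d_{KR}$ rate is known, so the logical order is: invoke Theorems~\ref{thm:1}--\ref{thm:2}, then select $\varepsilon_N$, then define $\Phi_N, \tilde\Phi_N$, then apply part~1 and the weak-convergence argument.
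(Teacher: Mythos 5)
Your proposal is correct and follows essentially the same route as the paper: part 1 by bilinear decomposition plus the definition of $d_{KR}$, and part 2 by an approximate identity centred at $x$ (the paper normalizes by $f_\rho(z)$ rather than $f_\rho(x)$, which makes $\int \Phi_N f_\rho\,dz=1$ exactly, but your variant works too), rescaling to $Lip_1$ so that part 1 applies, continuity of $z\mapsto\langle\varphi,P_{[0,T]}(\cdot|z)\rangle$ via the canonical coupling and the spatial Lipschitz bound on $\lambda$, and finally balancing the kernel blow-up against the $d_{KR}$ rate. One bookkeeping slip: after undoing the $Lip_1$ rescaling, the error term is (Lipschitz constant)$\times$(sup norms)$\times d_{KR}\sim \varepsilon_N^{-(d+1)}\cdot\varepsilon_N^{-d}\cdot d_{KR}=\varepsilon_N^{-(2d+1)}d_{KR}$, not $\varepsilon_N^{-(d+1)}d_{KR}$ as you wrote — this is why the paper's admissible scaling is $p(d)<[(4+d)(2d+1)]^{-1}$ (resp.\ $[(2\vee d)(2d+1)]^{-1}$); your conclusion that a sufficiently small $\kappa$ exists is unaffected.
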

\begin{remark}\label{rmk:propagation:of:chaos}
Equation \eqref{Propagation_of_chaos} has to be compared to the property of propagation of chaos for standard or multi-class mean field approximations. Thanks to a suitable spatial scaling, which is contained in the test functions $g_{N}$ and $\tilde{g}_{N}$ and is explicit in the proof, we find that the activity near position $x$ is asymptotically independent of the activity near position $\tilde{x}$. Relating this result with multi-class propagation of chaos as defined in \cite{graham2008chaoticity} for instance, let us mention that:
\begin{itemize}
\item if $x=\tilde{x}$, one recovers the chaoticity within a class,
\item if $x\neq \tilde{x}$, one recovers the chaoticity between two different classes.
\end{itemize}
The functions $\Phi_{N}$ and $\tilde{\Phi}_{N}$ are approximations to the identity with spatial scaling $N^{p(d)}$. The optimal scaling obviously depends on the scenario under study. Under Scenario (\hyperref[ass:random:position]{$S_{1}$}), we need $p(d)<[(4+d)(2d+1)]^{-1}$ whereas (\hyperref[ass:deterministic:position]{$S_{2}$}) the weaker condition $p(d)<[(2\vee d)(2d+1)]^{-1}$ is needed (the condition is weaker since the convergence is faster).
\end{remark} 
\begin{proof}[Proof of Corollary \ref{cor:propagation:of:chaos}]
For any bounded functions $g,\tilde{g}\in Lip_1$, we apply the triangle inequality to obtain the following bound
\begin{multline}
\left|E\left[\left\langle g, P^{(N,N)}_{[0,T]}\right\rangle \left\langle \tilde{g}, P^{(N,N)}_{[0,T]}\right\rangle \right]-\left\langle g, P_{[0,T]}\right\rangle \left\langle \tilde{g}, P_{[0,T]}\right\rangle\right| \\ 
\leq \|g\|_{\infty} \left|E\left[ \left\langle \tilde{g}, P^{(N,N)}_{[0,T]}-P_{[0,T]}\right\rangle \right] \right| +\|\tilde{g}\|_{\infty}\left| E\left[ \left\langle g, P^{(N,N)}_{[0,T]}-P_{[0,T]}\right\rangle \right] \right|
\end{multline}
from which we deduce the first statement.

Next, to construct suitable sequences $(g_{N})_{N\geq 1}$ and $(\tilde{g}_{N})_{N\geq 1}$, let us first give some control for the density $f_{\rho}$. Since $f_{\rho}$ is $\mathcal{C}^{1}$, let $r>0$, $\varepsilon>0$ and $M>0$ be such that for all $y$ in $B(x,r)\cup B(\tilde{x},r)$, $f_{\rho}(y) \geq \varepsilon$ (recall that $f_{\rho}(x)\neq 0,$ $f_{\rho}(\tilde{x})\neq 0$) and $\| \nabla f_{\rho}(y)\| \leq M$. 

Then, let $\Phi:\mathbb{R}^{d}\to \mathbb{R}$ be a mollifier, that is a compactly supported smooth function such that $\int \Phi(y) dy=1$. Let us define $\Phi_{N}$ and $\tilde{\Phi}_{N}$ by
\begin{equation*}
\Phi_{N}(y) = N^{dp(d)} \frac{\Phi(N^{p(d)}(y-x))}{f_{\rho}(y)} \ \text{ and } \ \tilde{\Phi}_{N}(y) = N^{dp(d)} \frac{\Phi(N^{p(d)}(y-\tilde{x}))}{f_{\rho}(y)},
\end{equation*}
where $p(d)>0$ gives the spatial scaling of the approximations $\Phi_{N}$ and $\tilde{\Phi}_{N}$ (conditions on $p(d)$ raising to convergence are given in Remark \ref{rmk:propagation:of:chaos} above and proved below). Since $\Phi$ is compactly supported, there exists $R>0$ such that $\supp(\Phi)\subset B(0,R)$ which in particular implies that $\supp(\Phi_{N})\subset B(x,RN^{-p(d)})$ and $\supp(\tilde{\Phi}_{N})\subset B(\tilde{x},RN^{-p(d)})$. Until the end of the proof, we will assume that $N$ is large enough so that $RN^{-p(d)}\leq r$. Furthermore, we have
\begin{equation}
\max(\|\Phi_{N}\|_{\infty}, \|\tilde{\Phi}_{N}\|_{\infty}) \leq N^{dp(d)} \frac{\|\Phi \|_{\infty}}{\varepsilon}=\alpha_{N},
\end{equation}
and, by applying the quotient rule,
\begin{equation*}
\max\left( \| \nabla \Phi_{N}(y) \|, \| \nabla \tilde{\Phi}_{N}(y) \| \right) \leq N^{dp(d)} \left[ \frac{N^{p(d)} \| \nabla \Phi\|_{\infty}}{\varepsilon} + \frac{\|\Phi\|_{\infty} M}{\varepsilon^2}\right]=\beta_{N}.
\end{equation*}
We are now in position to define suitable sequences $(g_{N})_{N\geq 1}$ and $(\tilde{g}_{N})_{N\geq 1}$ by
\begin{equation*}
g_{N}(\eta,y)= \varphi(\eta) \Phi_{N}(y) \ \text{ and }\ \tilde{g}_{N}(\eta,y)= \tilde{\varphi}(\eta) \tilde{\Phi}_{N}(y).
\end{equation*}
Obviously, the functions $\beta_{N}^{-1} g_{N}$ and $\beta_{N}^{-1} \tilde{g}_{N}$ belong to $Lip_{1}$ and $\max(\| g_{N}\|_{\infty}, \|\tilde{g}_{N}\|_{\infty}) \leq \alpha_{N}$. 

On the one hand, applying the inequality obtained in the first step to $\beta_{N}^{-1} g_{N}$ and $\beta_{N}^{-1} \tilde{g}_{N}$, we deduce that
\begin{equation*}
\left|E\left[\left\langle g_{N}, P^{(N,N)}_{[0,T]}\right\rangle \left\langle \tilde{g}_{N}, P^{(N,N)}_{[0,T]}\right\rangle \right]-\left\langle g_{N}, P_{[0,T]}\right\rangle \left\langle \tilde{g}_{N}, P_{[0,T]}\right\rangle\right|
\end{equation*}
is upperbounded by $2\beta_{N} \alpha_{N} d_{KR}\left(P^{(N,N)}_{[0,T]},P_{[0,T]}\right)$.

On the other hand, we have
\begin{equation*}
\left\langle g_{N}, P_{[0,T]}\right\rangle = \int_{\mathbb{R}^{d}} E\left[\varphi(\bar{Z}_{y})\right] N^{dp(d)}\Phi(N^{p(d)}(y-x)) dy \to \int_{\mathbb{R}^{d}} E\left[\varphi(\bar{Z}_{y})\right] \delta_{x}(dy),
\end{equation*}
thanks to the continuity of $y\mapsto E[\varphi(\bar{Z}_{y})],$ which is a consequence of \eqref{eq:continuity:spatial:marginal:of:P} proven below. Therefore,
\begin{equation*}
\left\langle g_{N}, P_{[0,T]}\right\rangle \to \left< \varphi, P_{[0,T]}(\cdot|x) \right> \ \text{ and }\ \left\langle \tilde{g}_{N}, P_{[0,T]}\right\rangle \to \left< \tilde{\varphi}, P_{[0,T]}(\cdot|\tilde{x}) \right>.
\end{equation*}
Gathering the steps above, we deduce \eqref{Propagation_of_chaos} provided that $\beta_{N} \alpha_{N} d_{KR}\left(P^{(N,N)}_{[0,T]},P_{[0,T]}\right)$ goes to $0$, which holds true if $p(d)<[(4+d)(2d+1)]^{-1}$ under Scenario (\hyperref[ass:random:position]{$S_{1}$}) (apply Theorem \ref{thm:1}) or if $p(d)<[(2\vee d)(2d+1)]^{-1}$ under Scenario (\hyperref[ass:deterministic:position]{$S_{2}$}) (apply Theorem \ref{thm:2}).
\end{proof}

We close this section with the following observation. 
If for each $t\geq 0$ and $x\in \R^d$ we call
\begin{equation}
\label{def:membrane_potential_at_limit}
u(t,x)= e^{-\alpha t}u_{0}(x) +\int_{\R^d}w(y,x)\int_{0}^te^{-\alpha(t-s)}\lambda(s,y)ds\rho(dy),
\end{equation}
then clearly $\lambda(t,x)=f(u(t,x))$ and $u(t,x)$
satisfies the {\it scalar neural field equation}
\begin{equation}
\label{def:neural_field_equation}
\begin{cases}
\dfrac{\partial u(t,x)}{\partial t}=-\alpha u(t,x)+ \int_{\R^d}w(y,x)f(u(t,y))\rho(dy)\\
u(0,x)= u_{0}(x).
\end{cases}
\end{equation}
Writing $ U^{(N)} ( t, x_i ) := U^{(N)}_i (t) ,$ where $U^{(N)} _i ( t) $ has been defined in \eqref{eq:intensity} above, we obtain the convergence of $ U^{(N)} (t, x_i ) $ to the solution $u(t,x) $ of the neural field equation in the following sense. 

\begin{corollary}\label{cor:convergencetoneuralfield}
Under the conditions of either Theorem \ref{thm:1} or Theorem \ref{thm:2}, we have that 
\begin{equation}
\lim_{N \to \infty } E \left( \int_\R  \int_0^T  | U^{(N)} (t,x) - u(t,x) | dt \mu^{(N)} (dx) \right) = 0 ,
\end{equation}
for any $T > 0, $ where expectation is taken with respect to the randomness present in the jumps of the process.
\end{corollary}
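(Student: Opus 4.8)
The plan is to reduce the claim to the convergence established in Theorem \ref{thm:1} (or Theorem \ref{thm:2}) by exhibiting, for each fixed $N$, a Lipschitz test function $g$ on $D([0,T],\N)\times \R^d$ whose integral against $P^{(N,N)}_{[0,T]}$ reproduces (up to a controllable error) the quantity $E(\int_{\R^d}\int_0^T |U^{(N)}(t,x)-u(t,x)|\,dt\,\mu^{(N)}(dx))$. First I would rewrite $U^{(N)}(t,x_i)$ purely as a functional of the path $(Z^{(N)}_j)_{1\le j\le N}$ and the positions: from \eqref{eq:intensity},
$$
U^{(N)}(t,x_i)=e^{-\alpha t}u_0(x_i)+\int_{\R^d}w(y,x_i)\int_{]0,t]}e^{-\alpha(t-s)}dZ^{(N)}_y(s)\,\mu^{(N)}(dy),
$$
where $Z^{(N)}_y$ denotes the counting process attached to position $y$ under $\mu^{(N)}$. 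Comparing with \eqref{def:membrane_potential_at_limit}, the difference $U^{(N)}(t,x_i)-u(t,x_i)$ splits into (a) a term coming from replacing $\mu^{(N)}(dy)$ by $\rho(dy)$ in the outer integral, and (b) a term coming from replacing the empirical spike measure $\int_{]0,t]}e^{-\alpha(t-s)}dZ^{(N)}_y(s)$ by its limit $\int_0^t e^{-\alpha(t-s)}\lambda(s,y)\,ds$ — i.e. exactly the discrepancy controlled by $d_{KR}(P^{(N,N)}_{[0,T]},P_{[0,T]})$.

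The key steps, in order, are as follows. (i) For fixed $x\in\R^d$, introduce the map $\eta\mapsto G_x(\eta):=\int_0^T\bigl|e^{-\alpha t}u_0(x)+\int_{]0,t]}e^{-\alpha(t-s)}d\eta(s)\cdot(\text{weight})-u(t,x)\bigr|\,dt$; the honest version is $G_x$ acting on the whole configuration, so instead I would work directly with the double integral and use the decomposition above. (ii) Bound term (a): by the Lipschitz property \eqref{lambda_holder_L1} of $\lambda$ (hence of $u$, using boundedness of $f$ and of $w^x$ in $L^1(\rho)$) together with Assumption \ref{ass:w:lipschitz} and the a priori bound of Proposition \ref{prop:uniqueness}, the contribution of swapping $\mu^{(N)}$ for $\rho$ is $O(W_1(\mu^{(N)},\rho))\le O(W_2(\mu^{(N)},\rho))$, which tends to $0$ in both scenarios. (iii) Bound term (b): fix $y$ and note that $\eta\mapsto \int_0^T\int_{]0,t]}e^{-\alpha(t-s)}d\eta(s)\,dt$ is, after integration by parts, equal to $\int_0^T \alpha^{-1}(1-e^{-\alpha(T-s)})\,d\eta(s)$, which is $\alpha^{-1}$-Lipschitz in $\eta$ for the uniform norm on $D([0,T],\N)$ (and trivially Lipschitz for $d_S$ on the set $\{\eta_T\le$ const$\}$); multiplying by the bounded kernel $w(y,x)$ and integrating $x$ against $\mu^{(N)}$, $y$ against $\mu^{(N)}$, produces a fixed Lipschitz functional $g$ of the pair $(\eta,x)$ up to the outer position averaging. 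Applying $d_{KR}$ to this $g$, plus the fact that $\langle g,P_{[0,T]}\rangle$ is precisely $\int_{\R^d}\int_0^T\int_{\R^d}w(y,x)e^{-\alpha(t-s)}\cdots$, shows term (b) is $O(d_{KR}(P^{(N,N)}_{[0,T]},P_{[0,T]}))\to 0$. (iv) Conclude by the triangle inequality and the rates in Theorems \ref{thm:1}--\ref{thm:2}.

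The main obstacle I anticipate is \textbf{making the test function genuinely Lipschitz for the Skorokhod-type metric $d_S$ on all of $D([0,T],\N)$, not merely on sublevel sets of $\eta_T$}: the functional $\eta\mapsto\int_{]0,t]}e^{-\alpha(t-s)}d\eta(s)$ is Lipschitz in the uniform distance but the relation between $d_S$ and the uniform distance degrades as $\eta_T$ grows, so one cannot directly feed an unbounded-growth functional into $d_{KR}$. The remedy is a truncation argument: restrict to $\{\eta_T\le A\}$, on which one has a genuine Lipschitz bound with constant depending on $A$ and on the modulus of continuity of $t\mapsto e^{-\alpha(T-t)}$, apply $d_{KR}$ there, and control the tail $\{\eta_T>A\}$ using the uniform second-moment bound $\sup_N\frac1N\sum_i E[(Z^{(N)}_i(T))^2]<\infty$ from Proposition \ref{Prop:2} together with the analogous a priori bound \eqref{eq:apriori} for $P_{[0,T]}$ from Proposition \ref{prop:apriori}; letting $A\to\infty$ after $N\to\infty$ closes the argument. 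A secondary, more routine point is checking that $u(t,x)$ inherits enough regularity and boundedness from $\lambda$ via \eqref{def:membrane_potential_at_limit} for step (ii) — this is immediate from Proposition \ref{prop:uniqueness} and Assumptions \ref{ass:1}, \ref{Ass:2}, \ref{ass:w:lipschitz}.
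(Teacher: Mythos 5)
There is a genuine gap in your reduction to the $d_{KR}$ convergence. The quantity you need to control is $E\int_{\R^d}\int_0^T |U^{(N)}(t,x)-u(t,x)|\,dt\,\mu^{(N)}(dx)$, and by Fubini this forces you to bound, for each \emph{fixed} $t$ and $x$, the expectation $E\left|\left\langle g_{t,x},P^{(N,N)}_{[0,T]}-P_{[0,T]}\right\rangle\right|$ with $g_{t,x}(\eta,y)=w(y,x)\int_{]0,t]}e^{-\alpha(t-s)}d\eta(s)$. But this functional is not Lipschitz --- indeed not even continuous --- for the metric $d_S$ entering the definition \eqref{def:distance} of $d_{KR}$: if $\eta$ has a single jump at $t-\varepsilon$ and $\xi$ a single jump at $t+\varepsilon$, then $d_S(\eta,\xi)=O(\varepsilon)$ while $g_{t,x}(\eta,y)-g_{t,x}(\xi,y)\approx w(y,x)$. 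Your proposed remedy (truncating to $\{\eta_T\le A\}$) addresses the growth of the Lipschitz constant in $\eta_T$, not this discontinuity --- the counterexample lives on $\{\eta_T\le 1\}$. The place where you implicitly dodge this is step (iii): the functional $\eta\mapsto\int_0^T\int_{]0,t]}e^{-\alpha(t-s)}d\eta(s)\,dt=\int_{]0,T]}\alpha^{-1}(1-e^{-\alpha(T-s)})d\eta(s)$ is indeed $d_S$-Lipschitz on sublevel sets of $\eta_T$ precisely because the $t$-integration smooths the kernel in $s$; but by integrating over $t$ \emph{before} taking the absolute value you are bounding $\int_{\R^d}\left|\int_0^T(U^{(N)}-u)\,dt\right|\mu^{(N)}(dx)$, which is smaller than the quantity in the corollary. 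So the theorem statements, used as black boxes, do not suffice.

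The paper's route is different and avoids this entirely: the quantity of the corollary appears explicitly on the left-hand side of \eqref{eq:control:P:N:N:P:infty:N} in Theorem \ref{thm:direct:coupling}. Inside that proof, $E\int_0^T|U^{(N)}(s,x_i)-u(s,x_i)|\,ds$ is decomposed (see \eqref{eq:tobecite}) into the three terms $F^{(N)}_i$, $G^{(N)}_i$, $H^{(N)}_i$ --- your term (a) is exactly $H^{(N)}_i$, but your term (b) splits further into a coupling term $F^{(N)}_i$, controlled by Gronwall's lemma applied to the total-variation distance $\Delta^{(N)}_j$ between $Z^{(N)}_j$ and the coupled limit process $\bar Z_j$ of \eqref{eq:def:limit:process}, and a variance term $G^{(N)}_i$ of order $N^{-1/2}$. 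These estimates are phrased in the (stronger) uniform/total-variation distance between coupled paths, not via $d_S$-Lipschitz test functions, which is why they control the corollary's quantity while the $d_{KR}$ statement alone does not. To repair your argument you would either have to reprove this coupling estimate (at which point you are reproving Theorem \ref{thm:direct:coupling}), or cite its intermediate conclusion rather than Theorems \ref{thm:1}--\ref{thm:2}.
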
 
The proof of this corollary goes along the lines of the proof of Theorem \ref{thm:1} and Theorem \ref{thm:2}, in Sections \ref{sec:5} and \ref{sec:6} below.

\section{Estimating $d_{KR}(P^{(N,N)}_{[0,T]},P_{[0,T]})$ for fixed positions $x_1, \ldots, x_N $}\label{sec:4}
Assume that the following quantities are given and fixed:
\begin{itemize}
\item the number of neurons $N$,
\item the positions of the neurons $x_{1},\dots,x_{N}$.
\end{itemize}

Hereafter the following empirical measures will be used (the first and the last one are just reminders of \eqref{def:marginal_empirical_measure} and \eqref{eq:def:limit:measure} respectively),
\begin{equation}\label{eq:definition:three:measures}
\begin{cases}
P^{(N, N)}_{[0,T]}(d\eta,dx)=\frac{1}{N}\sum_{i=1}^{N}\delta_{\left((Z^{(N)}_i(t))_{0\leq t\leq T}, x_i\right)}(d\eta,dx),\\
P^{(\infty , N)}_{[0,T]}(d\eta,dx)=P_{[0,T]}(d\eta|x)\mu^{(N)}(dx), \\
P_{[0,T]}(d\eta,dx)=P_{[0,T]}(d\eta|x)\rho (dx).
\end{cases}
\end{equation}

To estimate $d_{KR}(P^{(N,N)}_{[0,T]},P_{[0,T]})$ we shall proceed as follows. We will first show that $P^{(N,N)}_{[0,T]}$ and $P^{(\infty,N)}_{[0,T]}$ are close to each other by using a suitable coupling. The rate of convergence of such a coupling is a balance between the variance coming from the $N$ particles and the bias induced by the replacement of $\rho(dx)$ by $\mu^{(N)}(dx)$. Next, it will be shown that the $d_{KR}$-distance between $P^{(\infty,N)}_{[0,T]}$ and $P_{[0,T]}$ is controlled in terms of the Wasserstein distance between $\mu^{(N)}(dx)$ and $\rho(dx)$. 

\subsection{Estimating the $d_{KR}$-distance between $P^{(N, N)}_{[0,T]}$ and $P^{(\infty, N)}_{[0,T]}$}
The aim of this subsection is to upper-bound $d_{KR}(P^{(N, N)}_{[0,T]},P^{(\infty, N)}_{[0,T]})$ when the positions $x_1, \ldots, x_N\in \R^d $ are fixed. 

\begin{thm}
\label{thm:direct:coupling}
Under Assumptions \ref{ass:1}, \ref{Ass:2} and \ref{ass:w:lipschitz}, for each $N$ in $\N$ and $T>0$ there exists a constant $C=C(\alpha,f,w,T,u_{0})>0$ such that for a fixed choice $ x_1, \ldots , x_N\in \R^d $ of positions, \begin{multline}\label{eq:control:P:N:N:P:infty:N}
d_{KR}\left(P^{(N, N)}_{[0,T]}, P^{(\infty, N)}_{[0,T]} \right) + E \left( \int_\R \int_0^T | U^{(N)} (t, x) - u(t, x) | dt \mu^{(N)} (dx) \right) \\
\leq CN^{-1/2} \left[
\left( \int_{\R^d} \left(\int_{0}^T\lambda(t,x)dt\right)^2\mu^{(N)}(dx)\right)^{1/2}\right. \\+ \left.
\left(\sup_{j } \| w_{x_j} \|_{L^2(\mu^{(N)})}\exp\left(\sup_{j} \| w_{x_j} \|_{L^1(\mu^{(N)})}T\right)+1\right)\left(\int_{\R^d}\int_0^T \lambda(t,x)dt\mu^{(N)}(dx)\right)^{1/2} \right] \\ 
+C W_2 ( \mu^{(N)} , \rho )   \exp\left(\sup_{j } \| w_{x_j} \|_{L^1(\mu^{(N)})} T\right).
\end{multline}
\end{thm}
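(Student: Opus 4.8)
The plan is to construct an explicit coupling between the finite system $(Z^{(N)}_i)_{1\le i\le N}$ and a system of $N$ independent limit processes attached to the \emph{same} positions $x_1,\dots,x_N$, and then to estimate the $d_{KR}$-distance via this coupling. Concretely, let $(\Pi_i(dz,ds))_{1\le i\le N}$ be the i.i.d.\ Poisson random measures from Definition~\ref{def:2} used to build $Z^{(N)}_i$, and use the \emph{same} $\Pi_i$ to drive a family $(\bar Z_i)_{1\le i\le N}$ of inhomogeneous Poisson processes with intensities $\lambda(t,x_i)$, i.e.\ $\bar Z_i(t)=\int_0^t\int_0^\infty \one_{\{z\le \lambda(s,x_i)\}}\Pi_i(dz,ds)$. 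Since the $\Pi_i$ are independent and $\lambda(\cdot,x_i)$ is deterministic, the $\bar Z_i$ are independent and $(\bar Z_i, x_i)$ has law $P_{[0,T]}(d\eta|x_i)$; hence $N^{-1}\sum_i\delta_{(\bar Z_i,x_i)}$ has mean $P^{(\infty,N)}_{[0,T]}$. Writing $\langle g, P^{(N,N)}-P^{(\infty,N)}\rangle = \langle g, P^{(N,N)}-\frac1N\sum_i\delta_{(\bar Z_i,x_i)}\rangle + \langle g, \frac1N\sum_i\delta_{(\bar Z_i,x_i)}-P^{(\infty,N)}\rangle$, the first term is controlled pathwise using $g\in Lip_1$ and the Skorokhod-type metric $d_S$ (it is bounded by $\frac1N\sum_i d_S(Z^{(N)}_i,\bar Z_i)$, which in turn is at most $\frac1N\sum_i (Z^{(N)}_i(T)\vee\bar Z_i(T))\wedge(\text{number of differing jumps})$ — more usefully, $d_S(Z^{(N)}_i,\bar Z_i)\le \Delta_i(T)$ where $\Delta_i(t)$ counts the points where the two thinning decisions disagree), while the second term is a centered average of independent bounded-variance contributions, hence $O(N^{-1/2})$ after taking expectations and Cauchy–Schwarz.

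The core of the argument is a Grönwall estimate on the coupling discrepancy. Set $\Delta_i(t) = \int_0^t\int_0^\infty \one_{\{z\le f(U^{(N)}_i(s-))\}\,\triangle\,\{z\le \lambda(s,x_i)\}}\Pi_i(dz,ds)$, the number of ``mismatched'' points up to time $t$; its compensator is $\int_0^t |f(U^{(N)}_i(s-)) - \lambda(s,x_i)|\,ds$. Using Assumption~\ref{ass:1} ($f$ is $L_f$-Lipschitz) and the SDE representations of $U^{(N)}_i$ (Remark~\ref{rmk:1}) and of $u(t,x_i)$ (equation \eqref{def:membrane_potential_at_limit}, whose exponential form I would exploit directly), one gets
$$
|U^{(N)}_i(t) - u(t,x_i)| \le \Bigl|\frac1N\sum_j w(x_j,x_i)\!\int_0^t\! e^{-\alpha(t-s)}dZ^{(N)}_j(s) - \int_{\R^d} w(y,x_i)\!\int_0^t\! e^{-\alpha(t-s)}\lambda(s,y)\,ds\,\rho(dy)\Bigr|.
$$
I would split the right-hand side into (a) a \emph{fluctuation} term comparing $\frac1N\sum_j w(x_j,x_i)\int e^{-\alpha(\cdot-s)}dZ^{(N)}_j$ with $\frac1N\sum_j w(x_j,x_i)\int e^{-\alpha(\cdot-s)}d\bar Z_j$, (b) a \emph{law of large numbers} term comparing $\frac1N\sum_j w(x_j,x_i)\int e^{-\alpha(\cdot-s)}d\bar Z_j$ with its mean $\frac1N\sum_j w(x_j,x_i)\int e^{-\alpha(\cdot-s)}\lambda(s,x_j)\,ds$, and (c) a \emph{spatial discretization} term comparing $\frac1N\sum_j w(x_j,x_i)\int e^{-\alpha(\cdot-s)}\lambda(s,x_j)ds = \langle w^{x_i}(\cdot)\int e^{-\alpha(\cdot-s)}\lambda(s,\cdot)ds, \mu^{(N)}\rangle$ with the same integrand against $\rho$. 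Term (a) is bounded by $\sup_j\|w_{x_j}\|_{L^1(\mu^{(N)})}\cdot\frac1N\sum_j \int_0^t e^{-\alpha(t-s)}d\Delta_j(s)$, feeding back into the discrepancy; term (b) is a variance term handled by the $L^2$ estimate of Proposition~\ref{Prop:2} (or rather its limit analogue) together with independence, giving the $N^{-1/2}$ with the factor $\sup_j\|w_{x_j}\|_{L^2(\mu^{(N)})}$; term (c) is, by the Lipschitz continuity of $w$ in the first variable (Assumption~\ref{ass:w:lipschitz}) and of $\lambda(s,\cdot)$ (Proposition~\ref{prop:uniqueness}, part 2), bounded by (a constant times) $W_1(\mu^{(N)},\rho)\le W_2(\mu^{(N)},\rho)$. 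Taking expectations of $\sum_i$, dividing by $N$, using $E[\Delta_i(t)] = \int_0^t E|f(U^{(N)}_i(s-))-\lambda(s,x_i)|ds \le L_f\int_0^t E|U^{(N)}_i(s-)-u(s,x_i)|ds$ (plus an $e^{-\alpha\cdot}$-weighted convolution from term (a), which is absorbed because $\int_0^t e^{-\alpha(t-s)}ds\le t\wedge\alpha^{-1}$), and applying Grönwall yields the bound with the exponential factors $\exp(\sup_j\|w_{x_j}\|_{L^1(\mu^{(N)})}T)$.

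For the main term's precise form — the appearance of $\bigl(\int(\int_0^T\lambda(t,x)dt)^2\mu^{(N)}(dx)\bigr)^{1/2}$ and $\bigl(\int\int_0^T\lambda(t,x)dt\,\mu^{(N)}(dx)\bigr)^{1/2}$ — one tracks the second-moment bookkeeping carefully: the martingale $Z^{(N)}_j(t)-\bar Z_j(t) - (\text{compensators})$ and the centered quantity $\bar Z_j(t)-\int_0^t\lambda(s,x_j)ds$ have predictable brackets governed by $\int_0^T f(U^{(N)}_j)ds$ and $\int_0^T\lambda(s,x_j)ds$ respectively, and a self-improving argument (first get a crude a~priori $L^1$ bound from Proposition~\ref{Prop:2}, plug it into the $L^2$ estimate) produces exactly these $L^1(\mu^{(N)})$- and $L^2(\mu^{(N)})$-norms of $\int_0^T\lambda(t,\cdot)dt$. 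The $d_{KR}$-distance is then recovered by noting $d_{KR}(P^{(N,N)},P^{(\infty,N)}) \le \frac1N\sum_i E[d_S(Z^{(N)}_i,\bar Z_i)] + E\bigl|\langle g,\frac1N\sum_i\delta_{(\bar Z_i,x_i)}-P^{(\infty,N)}\rangle\bigr|$ with both pieces already estimated, and the term $E\int_\R\int_0^T|U^{(N)}(t,x)-u(t,x)|dt\,\mu^{(N)}(dx)$ is literally $\frac1N\sum_i E\int_0^T|U^{(N)}_i(t)-u(t,x_i)|dt$, which is exactly what the Grönwall argument controls, so it comes for free.

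\textbf{Main obstacle.} The delicate point is the simultaneous, self-consistent control of the three error sources: the fluctuation term (a) couples back into the very discrepancy one is trying to bound (requiring the Grönwall loop with the time-convolution kernel $e^{-\alpha(t-s)}$, not just a pointwise-in-time Grönwall), and getting the \emph{sharp} constants — in particular extracting the $L^2(\mu^{(N)})$-norm of $w_{x_j}$ rather than an $L^\infty$-bound, which is essential since we only assume \eqref{eq:assumption:w:2}/\eqref{eq:assumption:w:2_x_fixed} and not boundedness of $w$ — forces one to be careful to use Cauchy–Schwarz in $j$ against $\mu^{(N)}$ at exactly the right moment, and to keep the a~priori moment bound of Proposition~\ref{Prop:2} uniform enough. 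Secondarily, one must verify that $d_S(Z^{(N)}_i,\bar Z_i)$ is genuinely controlled by the mismatch count $\Delta_i(T)$ (a small time-change absorbs the displaced jumps), which is where the choice of the Skorokhod-type metric $d_S$ over the uniform metric pays off.
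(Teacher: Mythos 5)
Your proposal is correct and follows essentially the same route as the paper: the same thinning coupling through shared Poisson random measures $\Pi_i$, the same split into a coupling-discrepancy term plus a centered empirical average over the independent $\bar Z_i$ (handled by Cauchy--Schwarz and the exact Poisson variance of $\bar Z_i(T)$, which is where the $L^1(\mu^{(N)})$- and $L^2(\mu^{(N)})$-norms of $\int_0^T\lambda(t,\cdot)\,dt$ come from), the same three-way decomposition of $|U^{(N)}_i - u(\cdot,x_i)|$ into fluctuation, law-of-large-numbers and spatial-discretization pieces (the paper's $F^{(N)}_i$, $G^{(N)}_i$, $H^{(N)}_i$), and the same Gr\"onwall closure producing the factor $\exp(\sup_j\|w_{x_j}\|_{L^1(\mu^{(N)})}T)$. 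The only cosmetic difference is that the paper bounds $d_S$ by the uniform distance and hence by the total variation $\Delta_i(T)$ directly, with no time-change needed.
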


\begin{proof}
Fix a test function $g\in Lip_1$ and observe that by definition
\begin{equation*}
\left| \left\langle g, P^{(N,N)}_{[0,T]}-P^{(\infty, N)}_{[0,T]} \right\rangle \right| = \left| \frac{1}{N}\sum_{i=1}^{N} \left[g\left((Z^{(N)}_i(t))_{0\leq t\leq T},x_i\right) - \int g\left(\eta,x_i\right) P_{[0,T]}(d\eta|x_{i})\right] \right|.
\end{equation*}
We will introduce in Equation \eqref{eq:def:limit:process} below a suitable coupling between the processes $(Z^{(N)}_i(t))_{0\leq t\leq T}$, $i=1,\dots,N$ and the processes $(\bar{Z}_i(t))_{0\leq t\leq T}$, $i\geq 1$, the latter being independent and distributed according to $P_{[0,T]}(d\eta|x_{i})$. In particular, we can decompose the equation above as
\begin{multline*}
\left| \left\langle g, P^{(N,N)}_{[0,T]}-P^{(\infty, N)}_{[0,T]} \right\rangle \right| \leq \left| \frac{1}{N} \sum_{i=1}^{N}\left[ g\left((Z^{(N)}_i(t))_{0\leq t\leq T},x_i\right) - g\left((\bar{Z}_i(t))_{0\leq t\leq T},x_i\right)\right] \right| \\
+ \left| \frac{1}{N} \sum_{i=1}^{N}\left[ g\left((\bar{Z}_i(t))_{0\leq t\leq T},x_i\right) -  \int g\left(\eta,x_i\right) P_{[0,T]}(d\eta|x_{i}) \right]\right|\leq A^{N}(T)+ B^{N}(T),
\end{multline*}
with 
\begin{equation*}
\begin{cases}
A^{N}(T):= \frac{1}{N} \sum_{i=1}^{N} \sup_{0\leq t\leq T} \left| Z^{(N)}_{i}(t)- \bar{Z}_{i}(t) \right| \\
B^{N}(T):=\left| \frac{1}{N} \sum_{i=1}^{N} G_{i} - E[G_{i}]\right|,
\end{cases}
\end{equation*}
where $G_{i}:= g\left((\bar{Z}_i(t))_{0\leq t\leq T},x_i\right)$ for each $i\in\{1,\ldots, N\}$. To obtain the upper bound $A^N(T)$ we have used the $1$-Lipschitz continuity of $g$ and the inequality $d_S(\eta,\xi)\leq \sup_{t\leq T}|\eta(t)-\xi(t)|$ which is valid for all $\eta,\xi\in D([0,T],\N).$

Thus it suffices to obtain upper bounds for the expected values of $A^{N}(T)$ and $B^{N}(T)$. We start studying $B^{N}(T)$. By the Cauchy-Schwarz inequality and the independence of the $\bar{Z}_{i}$'s, it follows that
$$
E[B^N(T)]\leq \left[\frac{1}{N^2}\sum_{i=1}^N \mbox{Var}[G_i] 
\right]^{1/2},$$
so that we only need to control the variance of each $G_{i}$. Now, let $(\tilde{Z}_i(t))_{0\leq t\leq T}$ be an independent copy of $(\bar{Z}_i(t))_{0\leq t\leq T}$ and set  $\tilde{G}_i=g\left((\tilde{Z}_i(t))_{0\leq t\leq T},x_i\right)$. In what follows, the expectation $\tilde{E}$ is taken with respect to\ $\tilde{G}_i$. Thus, by applying Jensen's inequality we deduce that
$$
\mbox{Var}(G_i)=E\left[\left(G_i-E[G_i]\right)^2\right]=E\left[
\left[\tilde{E} (G_i-\tilde{G}_i)\right]^2\right]\leq E\left[\tilde{E}\left[(G_i-\tilde{G}_i)^2\right]\right] .
$$  
Then, since $d_S(\eta,\xi)\leq \sup_{0\leq t\leq T}|\eta(t)-\xi(t)|$ for all $\eta,\xi \in D([0,T],\N)$ and both processes $(\bar{Z}_i(t))_{0\leq t\leq T}$ and $(\tilde{Z}_i(t))_{0\leq t\leq T}$ are increasing, the $1$-Lipschitz continuity of $g$ implies that
$$
|G_i-\tilde{G}_i|\leq \sup_{0\leq t\leq T}|\bar{Z}_i(t)-\tilde{Z}_i(t)|\leq \bar{Z}_i(T)+\tilde{Z}_i(T) . 
$$
Therefore, by applying once more Jensen's inequality we obtain that 
$$
\tilde{E}\left[(G_i-\tilde{G}_i)^2\right]\leq 2\tilde{E}\left[\left(
\bar{Z}_i(T)^2+\tilde{Z}_i(T)^2\right)\right]
=2\left(\bar{Z}_i(T)^2+E\left[\bar{Z}_i(T)^2\right]
\right).
$$
Collecting all the estimates we then conclude that
$$
\mbox{Var}(G_i)\leq 4 E\big[\bar{Z}_i(T)^2\big].
$$
Now, noticing that $\bar{Z}_i(T)$ is a Poisson random variable with rate $\int_0^T\lambda(t,x_i)dt ,$ 
\begin{equation*}
E\left[\bar{Z}_{i}(T)^{2}\right] = \mbox{Var}(Z_i(T))+\left(E[\bar{Z}_i(T)]\right)^2 =  \int_{0}^{T} \lambda(t,x_{i})dt + \left(\int_{0}^{T} \lambda(t,x_{i}) dt \right)^{2}.
\end{equation*}
Hence, we have just shown that
$$
E[B^N(T)]\leq 2N^{-1/2}
\left[\int_{\R^d}\int_{0}^T\lambda(t,x)dt\mu^{(N)}(dx)+\int_{\R^d}
\left(\int_{0}^T\lambda(t,x)dt\right)^2\mu^{(N)}(dx)\right]^{1/2}.
$$ 
Since clearly $(u+v)^{1/2}\leq u^{1/2}+v^{1/2}$ for all $u,v\geq 0$, it follows that
\begin{multline}
E[B^N(T)]\leq 2N^{-1/2} \left[
\left(\int_{\R^d}\int_{0}^T\lambda(t,x)dt\mu^{(N)}(dx)\right)^{1/2} \right. \\ \left. + \left(\int_{\R^d}
\left(\int_{0}^T\lambda(t,x)dt\right)^2\mu^{(N)}(dx)\right)^{1/2} \right].
\end{multline}
 
We shall next deal with $A^{N}(T)$.
Let us now introduce the coupling we consider here. Let $(\Pi_i(dz,ds))_{i\geq 1}$ be a sequence of i.i.d.\ Poisson random measures with intensity measure $dsdz$ on $\R_+\times \R_+$. By Proposition \ref{prop:equal_definition} the process $(Z^{(N)}_i(t))_{t\geq 0,i=1,\dots,N}$ defined for each $t\geq 0$ and $i\in\{1,\ldots, N\}$ by
\begin{equation}
Z^{(N)}_i(t)=\int_{0}^t\int_{0}^{\infty} \one_{\Big\{z\leq f\Big( e^{-\alpha s}u_{0}(x_{i})+ \frac{1}{N} \sum_{j=1}^{N} w\left(x_{j},x_{i}\right)\int_{0}^{s} e^{-\alpha(s-h)}dZ^{(N)}_j(h) \Big)\Big\}}\Pi_i(dz,ds) ,
\end{equation}
is also a multivariate nonlinear Hawkes process with parameters $(N,f,w,u_{0},\alpha),$ and the processes $(\bar{Z}_i(t))_{t\geq 0,i=1,\dots,N}$ defined for each $i\in\{1,\ldots, N\}$ and $t\geq 0$ as
\begin{equation} \label{eq:def:limit:process}
\bar{Z}_i(t)= \int_{0}^t\int_{0}^{\infty} \one_{ \Big\{z\leq f\Big( e^{-\alpha s}u_{0}(x_{i}) +\int_{\R^d}w(y,x_{i})\int_{0}^{s} e^{-\alpha(s-h)}\lambda(h,y)dh\rho(dy) \Big)\Big\}}\Pi_i(dz, ds),
\end{equation}
are independent and such that $(\bar{Z}_i(t))_{t\geq 0}$ is distributed according to $P(\cdot|x_{i})$ for each $i\in\{1,\ldots, N\}$.
Now, for each $i$ and $t\geq 0$, let us define the following quantity
$$
\Delta^{(N)}_i(t)=\int_{0}^t |d(Z^{(N)}_i(s)-\bar{Z}_i(s))|.
$$
Using successively that $f$ is Lipschitz and the triangle inequality we deduce that
\begin{multline}\label{eq:tobecite}
E[\Delta^{(N)}_i(T)]\leq L_f E \left( \int_0^T | U^{(N)}(s, x_i)  - u(s, x_i) | ds \right) \\
:= L_f \left( F^{(N)}_{i}(T)+G^{(N)}_{i}(T)+H^{(N)}_{i}(T) \right) ,
\end{multline}
where $ U^{(N)} ( s, x_i ) := U^{(N)}_i (s) ,$ with  $U^{(N)} _i ( s) $ as in \eqref{eq:intensity}, and
\begin{equation*}
\begin{cases}
F^{(N)}_{i}(T):= E\left[\left| \int_0^{T} \frac{1}{N} \sum_{j=1}^{N} w\left(x_j,x_i\right) \int_{[0, t [ }e^{-\alpha(t-s)} \left[ dZ^{(N)}_j(s) - d\bar{Z}_{j}(s) \right]dt \right|\right],\\
G^{(N)}_{i}(T):= E\left[ \left| \int_0^{T} \frac{1}{N} \sum_{j=1}^{N} w\left(x_j,x_i\right) \int_{[0, t [ }e^{-\alpha(t-s)} \left[ d\bar{Z}_j(s) - \lambda(s,x_{j})ds \right]dt \right| \right],\\
H^{(N)}_{i}(T):= \left|  \int_0^{T}\int_{[0, t[ }e^{-\alpha(t-s)} \Big[ \frac{1}{N} \sum_{j=1}^{N} w(x_{j},x_{i}) \lambda(s,x_{j})- \int_{\R^d} w(y,x_{i})\lambda(s,y) \rho(dy) \Big]ds  dt \right| .
\end{cases}
\end{equation*}
Notice that since $e^{-\alpha(s-h)}\leq 1$ for $0\leq h\leq s$, we have
\begin{equation*}
F^{(N)}_{i}(T)\leq \frac{1}{N} \sum_{j=1}^{N} |w(x_{j},x_{i})| \int_{0}^{T} E[\Delta^{(N)}_j(s)] ds ,
\end{equation*}
which in turn implies that
\begin{equation}
\frac{1}{N}\sum_{i=1}^N F^{(N)}_{i}(T)\leq \sup_j \|w_{x_j} \|_{L^1(\mu^{(N)})} \int_{0}^{T}\frac{1}{N}\sum_{j=1}^{N} E[\Delta^{(N)}_j(s)] ds .
\end{equation}
Now, write $W_{j}(t):=  w\left(x_j,x_i\right) \int_{[0, t [ }e^{-\alpha(t-s)} d\bar{Z}_j(s) $. By the triangle inequality, we have
$$G^{(N)}_{i}(T)\leq  E\left[ \int_{0}^{T} \left| \frac{1}{N} \sum_{j=1}^{N} W_{j}(t) - E[W_{j}(t)] \right| dt \right].$$
Then the Cauchy-Schwarz inequality and the independence of the $W_{j}$'s implies that 
$$
G^{(N)}_{i}(T)\leq \int_{0}^T \left(\frac{1}{N^2}\sum_{j=1}^N \mbox{Var}(W_j(t))\right)^{1/2}dt.
$$
Now, $\textrm{Var}(W_{j}(t)) = w(x_{j},x_{i})^{2} \int_{0}^{t} e^{-2\alpha(t-s)}\lambda(s,x_{j})ds,$ whence
$$
\frac{1}{N}\sum_{i=1}^N G^{(N)}_i(T)\leq \int_0^T \frac{1}{N} 
\sum_{i=1}^N\left(\frac{1}{N^2}\sum_{j=1}^{N}w(x_{j},x_{i})^{2} \int_{0}^{t} e^{-2\alpha(t-s)}\lambda(s,x_{j})ds\right)^{1/2}dt.
$$
Applying Jensen's inequality twice, the right-hand side of the inequality above can be upper-bounded by 
$$
\sup_j \|w_{x_j}\|_{L^2(\mu^{(N)})}T\left(\frac{1}{T}\frac{1}{N^2}\sum_{j=1}^{N}\int_0^T  \int_{0}^{t} e^{-2\alpha(t-s)}\lambda(s,x_{j})dsdt\right)^{1/2}.
$$
Since $\int_0^T  \int_{0}^{t} e^{-2\alpha(t-s)}\lambda(s,x_{j})dsdt=\int_0^T\lambda(t,x_j)(2\alpha)^{-1}(1-e^{-2\alpha(T-t)})dt\leq T\int_0^T \lambda(s,x_j)dt$ (recall that $1-e^{-v}\leq v$ for all $v\geq 0$), we deduce that 
$$
\frac{1}{N}\sum_{i=1}^N G^{(N)}_i(T)\leq \sup_j \|w_{x_j}\|_{L^2(\mu^{(N)})}T\left(\int_{\R^d}\int_0^T \lambda(t,x)dt\mu^{(N)}(dx)\right)^{1/2}N^{-1/2}.
$$
Finally we shall deal with $H^{(N)}_{i}(T)$. Proceeding similarly as above, we have
\begin{multline*}
H^{(N)}_{i}(T)\leq \int_{0}^{T} \int_{0}^{t} e^{-\alpha(t-s)} \left|  \int_{\R^d} w(y,x_i)\lambda(s,y)\left[\mu^{(N)}(dy)-\rho(dy)\right] \right| ds dt\\
\leq T\int_{0}^{T} \left|  \int_{\R^d}w(y,x_i)\lambda(t,y)\left[\mu^{(N)}(dy)-\rho(dy)\right] \right| dt ,
\end{multline*}
and therefore it follows that
$$
\frac{1}{N}\sum_{i=1}^N H^{(N)}_{i}(T)\leq T\int_0^{T}\int_{\R^d} \left|\int_{\R^d}w(y,x)\lambda(t,y)\left[\mu^{(N)}(dy)-\rho(dy)\right]\right|\mu^{(N)}(dx)dt.
$$
Thus, defining $\delta(t)=N^{-1}\sum_{i=1}^N E[\Delta_i(t)]$ and then gathering the steps above gives 
\begin{multline*}
\delta(T)\leq T \left( \sup_j \|w_{x_j} \|_{L^2(\mu^{(N)})} \left(\int_{\R^d}\int_0^T \lambda(t,x)dt\mu^{(N)}(dx)\right)^{1/2}N^{-1/2}\right.\\  
+ \left. \int_0^{T}\int_{\R^d} \left|\int_{\R^d}w(y,x)\lambda(t,y)\left[\mu^{(N)}(dy)-\rho(dy)\right]\right|\mu^{(N)}(dx)dt\right.\\ \left.
+ \sup_j \|w_{x_j} \|_{L^1(\mu^{(N)})} \int_{0}^{T} \delta(t)dt\right).\\
\end{multline*}
Now, Gronwall's lemma implies
\begin{multline}
\delta(T)\leq T \left( \sup_j \|w_{x_j} \|_{L^2(\mu^{(N)})} \left(\int_{\R^d}\int_0^T \lambda(t,x)dt\mu^{(N)}(dx)\right)^{1/2}N^{-1/2} \right. \\ \left. +\int_0^{T}\int_{\R^d} \left|\int_{\R^d}w(y,x)\lambda(t,y)\left[\mu^{(N)}(dy)-\rho(dy)\right]\right|\mu^{(N)}(dx)dt \right) \\
 \exp\left(\sup_j \|w_{x_j} \|_{L^1(\mu^{(N)})} T\right).
\end{multline}
To deduce \eqref{eq:control:P:N:N:P:infty:N}, it suffices to observe that $E[A^{N}(T)]\leq \delta(T)$ and the following control proven below: there exists a constant $C=C(\alpha,f,w,T,u_{0})>0$ such that 
\begin{equation}\label{eq:control}
\int_0^{T}\int_{\R^d} \left|\int_{\R^d}w(y,x)\lambda(t,y)\left[\mu^{(N)}(dy)-\rho(dy)\right]\right|\mu^{(N)}(dx)dt \le C W_2 ( \mu^{(N)} , \rho ) .
\end{equation}

Indeed, let $ ( \tilde \Omega, \tilde{ \cal A}, \tilde P ) $ be a probability space  on which are defined random variables $ Y_1^{(N)} $ and $ Y_2^{(N) }   $ such that their joint law under $\tilde P $ is the optimal coupling achieving the Wasserstein distance $W_2 ( \mu^{(N)}, \rho ) $ of order $2$ between $ \mu^{(N)} $ and $ \rho .$
Then for all $ t \le T, $ using H\"older's inequality,  
\begin{multline}\label{eq:lal}
 \left|\int_{\R^d}w(y,x)\lambda(t,y)\left[\mu^{(N)}(dy)-\rho(dy)\right]\right| \\
 = \left| \tilde E ( w( Y_1^{(N)} , x)\lambda ( t, Y_1^{(N) } ) - w(Y_2^{(N) } , x) \lambda (t, Y_2^{(N) }  ) \right| \\
\le \| \lambda \|_{ [0, T ] \times \R^d  , \infty }  \tilde E ( | w( Y_1^{(N)} , x) - w( Y_2^{(N) } , x)| ) \\
+ \| w^x  \|_{L^2 (\rho ) } \left( \tilde E ( | \lambda (t, Y_1^{(N)}  ) - \lambda (t, Y_2^{(N) }  ) |^2 ) \right)^{1/2 } .
\end{multline} 
By Assumption \ref{ass:w:lipschitz}, 
$$ \tilde E ( | w( Y_1^{(N)} , x) - w( Y_2^{(N) } , x)| ) \le  L_w  \tilde E ( |  Y_1^{(N)}  - Y_2^{(N) } | )  \le L_w  W_2 ( \mu^{(N)} , \rho ) .$$ 
Moreover, using \eqref{lambda_holder_L1}, 
$$  \left( \tilde E ( | \lambda (t, Y_1^{(N)}  ) - \lambda (t, Y_2^{(N) }  ) |^2 ) \right)^{1/2 }   \le  C W_2 ( \mu^{(N)} , \rho ) ,$$ 
implying \eqref{eq:control}.
Finally, observe that together with \eqref{eq:tobecite}, we obtain the same control for $ E \left( \int_\R \int_0^T | U^{(N)} (t, x) - u(t, x) | dt \mu^{(N)} (dx) \right). $
\end{proof}

\subsection{Estimating the $d_{KR}$-distance between $P^{(\infty,N)}_{[0,T]}$ and $P_{[0,T]}$}
In this subsection we give an upper bound for $d_{KR}(P^{(\infty, N)}_{[0,T]},P_{[0,T]})$ in terms of the Wasserstein distance between the empirical distribution $\mu^{(N)}(dx)$ and the limit measure $\rho(dx)$. 

\begin{proposition}
\label{prop:last:control}
Grant Assumptions \ref{ass:1}--\ref{ass:w:lipschitz}. For each $N\geq 1$ and $T>0$, there exists a positive constant $C=C(f,u_0,w,\alpha,T)$ such that for any choice of $ x_1, \ldots , x_N\in \R^d$ the following inequality holds
\begin{equation}\label{eq:control:P:infty:N:P}
d_{KR}\left(P^{(\infty, N)}_{[0,T]}, P_{[0,T]}\right)\leq C W_1(\mu^{(N)},\rho) ,
\end{equation}
where $W_1(\mu^{(N)},\rho)$ is the Wasserstein distance between $\mu^{(N)}(dx)$ and $\rho(dx)$ associated with the metric $d(x,y)=\|x-y\|$ for $ x, y \in \R^d .$  
\end{proposition}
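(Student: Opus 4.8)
The plan is to reduce the statement to the Kantorovich--Rubinstein duality for $W_1$ on $\R^d$. Since the positions $x_1,\dots,x_N$ are fixed, both $P^{(\infty,N)}_{[0,T]}$ and $P_{[0,T]}$ are deterministic, so the expectation in \eqref{def:distance} is vacuous and it suffices to bound $\sup_{g\in Lip_1}\left|\left\langle g, P^{(\infty,N)}_{[0,T]}-P_{[0,T]}\right\rangle\right|$. For a fixed $g\in Lip_1$ I would introduce the averaged function
$$
\bar g(x):=\int_{D([0,T],\N)}g(\eta,x)P_{[0,T]}(d\eta|x)=E\left[g(\bar Z_x,x)\right],
$$
where $(\bar Z_x(t))_{0\le t\le T}$ denotes the inhomogeneous Poisson process with intensity $(\lambda(t,x))_{0\le t\le T}$, i.e.\ the process with law $P_{[0,T]}(\cdot|x)$. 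Since the two measures share the same conditional law $P_{[0,T]}(d\eta|x)$ and differ only through their second marginal, one has $\left\langle g, P^{(\infty,N)}_{[0,T]}-P_{[0,T]}\right\rangle=\int_{\R^d}\bar g(x)\left(\mu^{(N)}-\rho\right)(dx)$.

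The key step is to show that $\bar g$ is Lipschitz on $\R^d$ with a Lipschitz constant independent of $g\in Lip_1$. To this end I would realize $\bar Z_x$ and $\bar Z_{x'}$ on a common probability space by thinning the \emph{same} Poisson random measure $\Pi(dz,ds)$ with the respective intensities $\lambda(\cdot,x)$ and $\lambda(\cdot,x')$, exactly as in \eqref{eq:def:limit:process}. Then $\sup_{0\le t\le T}|\bar Z_x(t)-\bar Z_{x'}(t)|\le \int_0^T\left|d\left(\bar Z_x(s)-\bar Z_{x'}(s)\right)\right|$, and taking expectations the right-hand side equals $\int_0^T|\lambda(s,x)-\lambda(s,x')|\,ds$, since a jump is produced by exactly one of the two processes precisely when the mark $z$ of a point of $\Pi$ lies between $\lambda(s,x)$ and $\lambda(s,x')$. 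Combining the $1$-Lipschitz continuity of $g$ for the product metric $d_S(\eta,\xi)+\|x-x'\|$ with the spatial Lipschitz estimate \eqref{lambda_holder_L1} of Proposition \ref{prop:uniqueness} (available because $\|\lambda\|_{[0,T]\times\R^d,\infty}<\infty$ by the same proposition), this yields
$$
|\bar g(x)-\bar g(x')|\le E\left[\left|g(\bar Z_x,x)-g(\bar Z_{x'},x')\right|\right]\le \int_0^T|\lambda(s,x)-\lambda(s,x')|\,ds+\|x-x'\|\le (CT+1)\|x-x'\|,
$$
where $C$ is the constant of \eqref{lambda_holder_L1}, depending only on $f,u_0,w,\alpha,T$.

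To conclude, observe that $(CT+1)^{-1}\bar g\in Lip_1(\R^d)$ and that $\bar g\in L^1(\R^d,d|\mu^{(N)}-\rho|)$, since $\bar g$ is Lipschitz and both $\mu^{(N)}$ (a finite sum of Dirac masses) and $\rho$ (by Assumption \ref{ass:exponential:moments}) have finite first moment. The Kantorovich--Rubinstein duality recalled in Section \ref{sec:def} therefore gives $\left|\int_{\R^d}\bar g\,d(\mu^{(N)}-\rho)\right|\le (CT+1)W_1(\mu^{(N)},\rho)$, and taking the supremum over $g\in Lip_1$ gives \eqref{eq:control:P:infty:N:P} with constant $CT+1$. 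I expect the only genuine work to be the coupling bound for $|\bar g(x)-\bar g(x')|$ — in particular rigorously justifying the identity $E\int_0^T|d(\bar Z_x-\bar Z_{x'})|=\int_0^T|\lambda(s,x)-\lambda(s,x')|\,ds$ for the thinned processes — whereas the passage to $W_1$ via duality is routine once $\bar g$ is known to be uniformly Lipschitz.
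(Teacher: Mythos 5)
Your proof is correct and rests on exactly the same key ingredients as the paper's: the canonical coupling of $\bar Z_x$ and $\bar Z_{x'}$ through a common Poisson random measure, and the spatial Lipschitz estimate \eqref{lambda_holder_L1} for $\lambda$. The only (cosmetic) difference is that you pass to $W_1$ via the Kantorovich--Rubinstein duality applied to the averaged function $\bar g$, whereas the paper works directly with an optimal coupling $W^{(N)}(dx,dy)$ of $\mu^{(N)}$ and $\rho$; these are dual formulations of the same final step.
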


\begin{proof}
Notice that for deterministic probability measures $P$ and $\tilde P $ which are defined on $D([0,T],\N)\times \R^d$, the distance $d_{KR}\left(P, \tilde P \right)$ reduces to 
\begin{equation}
\label{def:classical_KR_distance}
d_{KR}\left(P, \tilde P \right)=\sup_{g\in Lip_1 , \| g \|_\infty < \infty }\left\langle g, P- \tilde P \right\rangle.  
\end{equation}
Now, fix a test function $g\in Lip_1.$ We take any coupling $ W^{(N)}(dx, dy ) $ of $\mu^{(N)} ( dx)$ and $ \rho (dy) .$ Given $ x$ and $y$ in $\mathbb{R}^{d}$, we use the canonical coupling of $\bar{Z}_x (t) $ and $ \bar{Z}_y ( t)$. That is, we pose 
$$ \bar Z_x ( t) =  \int_{0}^t\int_{0}^{\infty} \one_{ \Big\{z\leq f\Big( e^{-\alpha s}u_{0}(x) +\int_{\R^d}w(y,x)\int_{0}^{s} e^{-\alpha(s-h)}\lambda(h,y)dh\rho(dy) \Big)\Big\}}\Pi(dz, ds),
$$
and 
$$ \bar Z_y ( t) =  \int_{0}^t\int_{0}^{\infty} \one_{ \Big\{z\leq f\Big( e^{-\alpha s}u_{0}(y) +\int_{\R^d}w(y',y)\int_{0}^{s} e^{-\alpha(s-h)}\lambda(h,y')dh\rho(dy') \Big)\Big\}}\Pi(dz, ds),
$$
where we use the same Poisson random measure $ \Pi (dz, ds)  $ for the construction of $ \bar Z_x (t) $ and of $ \bar Z_y ( t) .$
Then
\begin{multline*}
\left\langle g, P_{[0,T]}^{(\infty, N)}-P_{[0,T]} \right\rangle = E \left( \int_{\R^d }   g( \bar Z_x (t) ,x) \mu^{(N)} (dx) -  \int_{\R^d } g (\bar Z_y (t) ,y) \rho (dy) \right) \\
= E  \int_{\R^d }  \int_{\R^d }\left[g( \bar Z_x (t),x ) -  g (\bar Z_y (t),y)\right] W^{(N)} ( dx, dy ) ,
\end{multline*}
since $ \mu^{(N)} ( \R^d ) = \rho ( \R^d ) = 1 .$ By the Lipschitz continuity of $g,$
$$ \left|\left\langle g, P_{[0,T]}^{(\infty, N)}-P_{[0,T]} \right\rangle\right| \le E \! \int \!\!\!\! \int \left[ d_S ( (\bar Z_x (t))_{0 \le t \le T} , (\bar Z_y (t) )_{0 \le t \le T} ) + \|x-y\| \right] W^N (dx, dy).$$
Yet, as a consequence of the canonical coupling, it follows that
\begin{equation}\label{eq:continuity:spatial:marginal:of:P}
E\left[d_S ( \bar Z_x , \bar Z_y )\right] \leq E\left[\sup_{[0,T]} |\bar Z_x (t) -\bar Z_y (t)|\right] \leq \int_{0}^{T} |\lambda(t,x)-\lambda(t,y)| dt\leq CT||x-y||,
\end{equation}
where we used \eqref{lambda_holder_L1}. Finally, the assertion follows from the definition of $W_{1}$.
\end{proof}

Before going to the proofs of Theorems \ref{thm:1} and \ref{thm:2}, let us sum up the results obtained above and then present the scheme of the remaining proofs. 

\begin{remark}
Combining Theorem \ref{thm:direct:coupling} and Proposition \ref{prop:last:control} together with the inequalities  $W_1(\mu^{(N)},\rho)\leq W_2(\mu^{(N)},\rho)$ and $\| \cdot \|_{L^1(\mu^{(N)})}\leq \|\cdot \|_{L^2(\mu^{(N)})}$, it follows that to conclude the proofs of Theorems  it suffices to control as $N\to+\infty$:\\
 1- the Wasserstein distance $W_2(\mu^{(N)},\rho)$; 2- the supremum $\sup_{j } \| w_{x_j} \|_{L^2(\mu^{(N)})}$.
\end{remark}
To treat the second point of the remark above, we use the following technical lemma in order to use \eqref{eq:assumption:w:2}.

\begin{lemma}\label{lem:control:L2:by:W1:compact:support}
Grant Assumption \ref{ass:w:lipschitz}. Let $\mu$, $\rho$ be two probability measures. Assume that $\rho$ satisfies Assumption \ref{ass:exponential:moments} and that $\mu$ is supported in $B(0^{d},r)$ for some $r>0$. Then, for any $\beta'<\beta$, there exists a constant $C=C(w,\beta,\beta')$ such that
\begin{equation}\label{eq:control:L2:W1:by:truncation}
\sup_{||y||\leq r}  \left| ||w_{y}||^2_{L^{2}(\mu)}-||w_{y}||^2_{L^{2}(\rho)} \right| \leq C \big(1+ r \big) \, W_{1}(\mu,\rho) + C\mathcal{E}_{\beta} e^{-\beta' r}.
\end{equation}
\end{lemma}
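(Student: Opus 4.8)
The plan is to decompose the difference $\|w_y\|_{L^2(\mu)}^2 - \|w_y\|_{L^2(\rho)}^2 = \int |w(z,y)|^2 \mu(dz) - \int |w(z,y)|^2 \rho(dz)$ into a part that can be controlled by the Wasserstein distance $W_1(\mu,\rho)$ and a truncation error coming from the fact that $z \mapsto |w(z,y)|^2$ is not globally Lipschitz (it grows quadratically in $\|z\|$). Fix a truncation radius $r$ (the support radius of $\mu$). First I would introduce a truncation: replace $|w(z,y)|^2$ by $g_r(z) := |w(z,y)|^2 \wedge M_r^2$ where $M_r = \sup_{\|y'\| \le r, \|z\| \le r} |w(z,y)| \le \|w_{0^d}\|_{\infty} + 2 L_w r$ by the Lipschitz property (Remark \ref{rem:2}), or more simply use a smooth cutoff; the key point is that $g_r$ agrees with $|w(\cdot,y)|^2$ on $B(0^d, r) \supseteq \supp(\mu)$, so $\int g_r \, d\mu = \int |w(\cdot,y)|^2 \, d\mu$.

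Then I would write
\[
\left| \int |w(z,y)|^2 \mu(dz) - \int |w(z,y)|^2 \rho(dz) \right| \le \left| \int g_r \, d\mu - \int g_r \, d\rho \right| + \int_{\|z\| > r} |w(z,y)|^2 \rho(dz).
\]
For the first term, I would use the Kantorovich–Rubinstein duality: $g_r$ is Lipschitz with Lipschitz constant bounded by $2 M_r L_w \le C(1 + r)$ (since $|w|^2$ has derivative $2|w| \cdot \partial w$ and $|w| \le M_r$ on the relevant region, while $\partial w$ contributes $L_w$), and $g_r$ is bounded, so this term is at most $C(1+r) W_1(\mu,\rho)$, uniformly over $\|y\| \le r$. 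For the second (tail) term, I would bound $|w(z,y)|^2 \le 2(\|w_{0^d}\|_\infty + L_w\|z\|)^2 \le C(1 + \|z\|^2)$ using Lipschitz continuity of $w$ and Remark \ref{rem:2}, and then estimate
\[
\int_{\|z\| > r} C(1 + \|z\|^2) \rho(dz) \le C \int_{\|z\| > r} e^{(\beta - \beta')\|z\|} e^{-(\beta-\beta')\|z\|}(1 + \|z\|^2) \rho(dz);
\]
since $(1+\|z\|^2) e^{-(\beta-\beta')\|z\|}$ is bounded and $e^{-\beta'\|z\|} \le e^{-\beta' r}$ on $\{\|z\| > r\}$, this is dominated by $C e^{-\beta' r} \int e^{\beta\|z\|}\rho(dz) = C \mathcal{E}_\beta e^{-\beta' r}$, using Assumption \ref{ass:exponential:moments}. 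Taking the supremum over $\|y\| \le r$ then yields \eqref{eq:control:L2:W1:by:truncation}.

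The main obstacle is bookkeeping the Lipschitz constant of the truncated function $g_r$ and making sure the dependence on $r$ comes out as the claimed factor $(1+r)$ rather than something worse: one must be careful that $M_r$ grows only linearly in $r$ (which is exactly what the Lipschitz bound on $w$ gives, via Remark \ref{rem:2}), so that the product $M_r \cdot L_w$ appearing in the Lipschitz constant of $|w|^2 \wedge M_r^2$ is $O(1+r)$. The rest is routine: choosing the cutoff so it is genuinely $1$-Lipschitz after rescaling, and splitting the exponent $\beta = \beta' + (\beta - \beta')$ to absorb the polynomial growth $1 + \|z\|^2$ into the exponential moment. No delicate probabilistic argument is needed here — it is a deterministic comparison of two integrals plus the duality formula for $W_1$.
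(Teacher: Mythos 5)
Your argument is correct, and it rests on the same three ingredients as the paper's proof: the $O(1+r)$ Lipschitz bound for $w_y^2$ in the vicinity of $B(0^{d},r)$, the Kantorovich--Rubinstein duality, and the exponential moments of $\rho$ to make the contribution of $\{\|z\|>r\}$ negligible. The organisation is genuinely different, though: you truncate the \emph{integrand}, replacing $w_y^2$ by $g_r=w_y^2\wedge M_r^2$ and applying the duality directly to the pair $(\mu,\rho)$, whereas the paper truncates the \emph{measure}, replacing $\rho$ by $\rho_{r}=\rho\mathbf{1}_{B(0^{d},r)}+(1-\rho(B(0^{d},r)))\delta_{0^{d}}$, applying the duality to $(\mu,\rho_r)$ on the ball, and then paying two extra tail estimates, one for $W_1(\rho,\rho_r)$ and one for $\left|\;\|w_y\|^2_{L^2(\rho_r)}-\|w_y\|^2_{L^2(\rho)}\right|$. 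Your route is slightly leaner: it produces the term $C(1+r)W_1(\mu,\rho)$ without the triangle-inequality detour through $\rho_r$, so it avoids the cross term $(1+r)W_1(\rho,\rho_r)\sim(1+r)e^{-\beta' r}$, which the paper has to absorb by slightly decreasing $\beta'$, and it sidesteps the minor point that the paper's test function is only exhibited as Lipschitz on the ball rather than on all of $\R^d$. Two small repairs to your write-up: the Lipschitz constant of $g_r$ is best justified not by differentiating $|w|^2$ but by writing $g_r=(|w_y|\wedge M_r)^2$ and using $|a^2-b^2|\le(a+b)|a-b|$ for $a,b\in[0,M_r]$, which gives the global constant $2M_rL_w$ including across the level set $\{|w_y|=M_r\}$; and you write $w(z,y)$ where the paper's $w_y(z)$ means $w(y,z)$, which is harmless since Assumption \ref{ass:w:lipschitz} is symmetric in the two arguments. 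Finally, it is worth noting that Remark \ref{rem:2} already gives $\sup_z|w(y,z)|\le\|w(0^{d},\cdot)\|_{\R^d,\infty}+L_w\|y\|$, so for $\|y\|\le r$ the function $w_y^2$ is globally Lipschitz with constant $O(1+r)$ and bounded; the duality then yields \eqref{eq:control:L2:W1:by:truncation} with no truncation and no exponential remainder at all --- an observation that would shorten both your argument and the paper's.
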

\begin{proof}
The proof is based on a truncation argument. Let us define the auxiliary measure $\rho_{r}(dx)$ by
\begin{equation}\label{eq:truncated:rho}
\rho_{r}(dx):= \rho(dx)\mathbf{1}_{B(0^{d},r)}(x) + \left(1-\rho(B(0^{d},r))\right)\delta_{0^{d}}(dx),
\end{equation}
which is the truncated version of $\rho(dx)$ to which we add a Dirac mass at $0^{d}$. 

The Lipschitz continuity of $w$ implies that for any $y,x,z\in B(0^d,r),$
$$
|w_y^2(x)-w_y^2 (z)|\leq C_w(1+r)\|x-z\|, 
$$
that is, $x\in B(0^{d},r)\mapsto (w_y (x))^{2}$ is Lipschitz with explicit constant, where the positive constant $C_w$ depends only on $w(0,0)$ and $L_w$. We hence deduce that for all $y\in B(0^d,r),$
\begin{equation*}
\left| ||w_{y}||^2_{L^{2}(\mu)}-||w_{y}||^2_{L^{2}(\rho_{r})} \right| =\left| \int_{B(0^{d},r)} w_{y}(x)^{2} d(\mu-\rho_{r})(x) \right| \leq C_{w} (1+r) W_{1}(\mu,\rho_{r}),
\end{equation*}
thanks to the Kantorovich-Rubinstein duality. By the triangle inequality it follows then 
\begin{equation*}
\left| ||w_{y}||^2_{L^{2}(\mu)}-||w_{y}||^2_{L^{2}(\rho_{r})} \right| \leq C_{w} (1+r) (W_{1}(\mu,\rho)+W_{1}(\rho,\rho_{r})).
\end{equation*}
By the canonical coupling, we have
$$
W_{1}(\rho,\rho_{r})\leq \int_{\|x\|>r}\|x\|\rho(dx).
$$
Now, since for any $\beta'<\beta$, there exists a constant $C>0$ depending only on $\beta$ and $\beta'$ such that $\|x\|e^{\beta'\|x\|}\leq C e^{\beta\|x\|}$ we infer that $\int_{\|x\|>r}\|x\|\rho(dx)\leq C\mathcal{E}_{\beta}e^{-\beta'r}$ which implies that  $W_{1}(\rho,\rho_{r})\leq C\mathcal{E}_{\beta}e^{-\beta'r}$.

Now, with $M_{r}=|w(0,0)| + L_wr$, we have for any $y\in B(0^d,r)$,
\begin{eqnarray*}
\left| ||w_{y}||^2_{L^{2}(\rho_{r})}-||w_{y}||^2_{L^{2}(\rho)} \right| & \leq & \int_{||x||>r} \left(M_{r} + L_w||x||\right)^{2} \rho(dx) + \left(1-\rho(B(0^{d},r))\right) M_{r}^{2}\\
& \leq &  2L^2_w\int_{||x||>r} ||x||^{2} \rho(dx) +  3\left(1-\rho(B(0^{d},r))\right) M_{r}^{2}.
\end{eqnarray*}
On the one hand, for any $\beta'<\beta$, $\int_{||x||>r} ||x||^{2} \rho(dx)\leq \int_{||x||>r} ||x||^{2} e^{\beta'||x||} \rho(dx) e^{-\beta' r}$ and there exists a constant $C$ that only depends on $\beta$ and $\beta'$ such that $||x||^{2} e^{\beta'||x||}\leq C e^{\beta||x||}$. Hence $\int_{||x||>r} ||x||^{2} \rho(dx)\leq C \mathcal{E}_{\beta} e^{-\beta' r}$.
On the other hand, $1-\rho(B(0^{d},r))= \int_{||x||>r} \rho(dx) \leq \mathcal{E}_{\beta}e^{-\beta r}$. The same argument applies and gives the existence of a constant $C$ such that $\left( 1-\rho(B(0^{d},r)) \right)M_{r}^{2}\leq C \mathcal{E}_{\beta} e^{-\beta' r}$. Finally, \eqref{eq:control:L2:W1:by:truncation} follows from triangular inequality.
\end{proof}

\section{Proof of Theorem \ref{thm:1}}\label{sec:5}
In this section we give the proof of Theorem \ref{thm:1} using the estimates obtained in the previous section.
We work under the additional assumption that the positions $x_{1},\dots,x_{N}$ are realizations of i.i.d.\ random variables $X_1, \ldots , X_N, $  distributed according to $\rho .$ 

On the one hand, to control the Wasserstein distance, Theorem 1.6 \ Item (i) of \cite{Bolley} gives that for any fixed $ d' > d$ there exist constants $K$ and $N_0$ depending only on $d$, $\beta$ and $\mathcal{E}_{\beta}$ such that  
$$P ( W_2 ( \mu^{(N)} , \rho )  > \varepsilon ) \le e^{ - K N \varepsilon^2 } ,$$
for any $0<\varepsilon\leq 1$ and $N\geq N_0\max\{1,\varepsilon^{-(4+d')}\}$. As a consequence, for any fixed $ d' > d$ it follows that if $ \varepsilon_N = O ( N^{ - \frac{1}{4 + d'}}) $ then 
$$\sum_{N=1}^{\infty}P ( W_2 ( \mu^{(N)} , \rho )  > \varepsilon_N )<\infty,$$
so that Borel-Cantelli's lemma implies 
\begin{equation}\label{eq:rate:convergence:W2:random}
W_{2}(\mu^{(N)},\rho)\leq CN^{-\frac{1}{4+d'}}
\end{equation}
eventually almost surely.

On the other hand, define $R_N=N^{\gamma}$ where the constant $\gamma>0$ will be specified later and observe that by Markov's inequality,
$P\left(\cup_{i=1}^N\left\{\|X_i\|> R_N\right\}\right)\leq N P(\|X_1\|> R_N)\leq \mathcal{E}_{\beta}e^{-\beta R_N(1-o(1))}$. As a consequence, 
$\sum_{N=1}^{\infty}P\left(\cup_{i=1}^N\left\{\|X_i\|> R_N\right\}\right)<\infty$ and by Borel-Cantelli's lemma we deduce that for almost all realizations $X_1,X_2,\ldots$ there exists a $N_0$ depending on that realization such that for all $N\geq N_0$, $\mu^{(N)}$ is supported in $B(0^{d},R_{N})$.

Taking $d'=d+1$, $0<\gamma<1/(10+2d)$ and applying Lemma \ref{lem:control:L2:by:W1:compact:support} gives that almost surely,
\begin{equation*}
\sup_{||y||\leq R_{N}}  \left| ||w_{y}||^2_{L^{2}(\mu^{(N)})}-||w_{y}||^2_{L^{2}(\rho)} \right| \xrightarrow[N\to \infty]{} 0,
\end{equation*}
and in particular,
\begin{equation}\label{eq:uniform:control:discrete:L2}
\limsup_{N\to\infty }\sup_{1\leq j\leq N}\|w_{x_j}\|^2_{L^2(\mu^{(N)})}\leq \sup_{y\in\R^d}\int_{\R^d}(w_y(x))^2\rho(dx)<\infty.
\end{equation}

Finally, the first two assertions of Theorem \ref{thm:1} follow immediately from \eqref{eq:control:P:N:N:P:infty:N} and \eqref{eq:control:P:infty:N:P} combined with \eqref{eq:uniform:control:discrete:L2} -- or $\sup_{j}\|w_{x_j}\|_{L^2(\mu^{(N)})}\leq ||w||_{\infty}$ in case $w$ is bounded; \eqref{eq:secondbound} is a consequence of \eqref{eq:rate:convergence:W2:random}.

\section{Proof of Theorem \ref{thm:2}}\label{sec:6}
In this section we give the proof of Theorem \ref{thm:2} using the estimates obtained in Section \ref{sec:4}.
The main issue here is to construct a sequence of empirical distributions $\mu^{(N)}$ for which the Wasserstein distance $W_2 ( \mu^{(N)} , \rho ) $ is controlled.

We first use a truncation argument to reduce to the case where $\rho$ is compactly supported.
Let $r>0$ be a truncation level to be chosen later. Let us define the measure $\rho_{r}$ by
\begin{equation*}
\rho_{r}(dx):= \rho(dx)\mathbf{1}_{B(0^{d},r)}(x) + \left(1-\rho(B(0^{d},r))\right)\delta_{0^{d}}(dx).
\end{equation*}
By the canonical coupling, we have
\begin{equation*}
W_{2}(\rho,\rho_{r})\leq \int_{||x||>r} ||x||^{2} \rho(dx).
\end{equation*}
Yet, if $r$ is large enough, 
\begin{equation*}
\int_{||x||>r} ||x||^{2} \rho(dx) \leq \int_{||x||>r} e^{\beta ||x||} \rho(dx) r^{2} e^{-\beta r},
\end{equation*}
hence, by Assumption \ref{ass:exponential:moments}, there exists $\beta>0$ such that
\begin{equation}\label{eq:control:W2:true:truncated}
W_{2}(\rho,\rho_{r})\leq \mathcal{E}_{\beta} r^{2}e^{-\beta r}<+\infty.
\end{equation}

Let us now describe how we construct an empirical distribution made of $N$ points adapted to any probability measure $\rho_{r}$ supported in $B(0^{d},r)$. For simplicity of our construction, we assume that $\mathbb{R}^{d}$ is endowed with the distance induced by the  $\ell^{\infty}$ norm.

The construction is iterative, so we first explain how each step works. Let $\nu$ be a measure having support in the cube $B(0^{d},r)=[-r,r]^{d}$ and denote by $|\nu|\geq 1/N$ its mass. Let us prove that
\begin{equation}
\text{\parbox{.85\textwidth}{there exists a cube $C$ with $\nu(C)\geq 1/N$ and radius less than $r \lfloor \left(N|\nu|\right)^{1/d} \rfloor^{-1}\leq 2r (N|\nu|)^{-1/d}$.}}
\end{equation}

\begin{sloppypar}
Assume that the $\nu$-mass of any cube of radius equal to $r \lfloor \left(N|\nu|\right)^{1/d} \rfloor^{-1}$ is less than $1/N$. There exists a covering of the cube $[-r,r]^{d}$ into $ \lfloor \left(N|\nu|\right)^{1/d} \rfloor^{d}$ disjoint smaller cubes, each one of radius equal to $r \lfloor \left(N|\nu|\right)^{1/d} \rfloor^{-1}$. This implies 
\begin{equation*}
|\nu| < \lfloor \left(N|\nu|\right)^{1/d} \rfloor^{d} N^{-1} \leq ((N|\nu|)^{1/d})^{d} N^{-1}= |\nu|
\end{equation*}
yielding a contradiction. Then, treat separately the cases $\left(N|\nu|\right)^{1/d}\geq 2$ and $\left(N|\nu|\right)^{1/d}<2$ to prove $r \lfloor \left(N|\nu|\right)^{1/d} \rfloor^{-1}\leq 2r (N|\nu|)^{-1/d}$.
\end{sloppypar}

Applying the iterative step above to the probability measure $\rho_{r}$ gives the existence of a cube $C_{N}$ such that $\rho_{r}(C_{N})\geq 1/N$ and $\diam(C_{N})\leq 4 r N^{-1/d}$. Then, we define the measure
\begin{equation*}
\rho_{r}^{N}:= \frac{N^{-1}}{\rho_{r}(C_{N})} \rho_{r} \mathbf{1}_{C_{N}}.
\end{equation*}
Its mass is $1/N$. Applying the iterative step to $\tilde{\rho}_{r}=\rho_{r}-\rho_{r}^{N}$ (its mass is $(N-1)/N$) gives a cube $C_{N-1}$ such that $\tilde{\rho}_{r}(C_{N-1})\geq 1/N$ and $\diam(C_{N-1})\leq 4 r (N-1)^{-1/d}$. Similarly we define $\rho_{r}^{N-1}:= \frac{N^{-1}}{\tilde{\rho}_{r}(C_{N-1})} \tilde{\rho}_{r} \mathbf{1}_{C_{N-1}}$. In brief, applying $N$ times the iterative step gives a sequence of cubes $C_{1},\dots,C_{N}$ and associated measures $\rho_{r}^{1},\dots,\rho_{r}^{N}$ such that for all $k$, 
\begin{equation*}
\diam(C_{k})\leq 4 r  k^{-1/d},
\end{equation*}
$\rho_{r}^{k}$ is a measure of mass $1/N$ supported in $C_{k}$, and $\rho_{r}=\sum_{k=1}^{N} \rho_{r}^{k}$. 

For each $k$, let $x_{k}$ denote the center of $C_{k}$ and let $\mu^{(N)}=N^{-1}\sum_{k=1}^{N}\delta_{x_{k}}$ denote the associated empirical distribution. To control $W_{2}(\mu^{(N)},\rho_{r})$, we use the canonical coupling $\pi(dx,dy)=\sum_{k=1}^{N} \left(N^{-1}\delta_{x_{k}}\right)\otimes \rho_{r}^{k}$. Hence,
\begin{multline*}
W_{2}(\mu^{(N)},\rho_{r})^{2} \leq N^{-1} \sum_{k=1}^{N} \int_{\mathbb{R}^{d}} \int_{\mathbb{R}^{d}} ||x-y||^{2} \delta_{x_{k}}(dx)\rho_{r}^{k}(dy)\\
\leq N^{-1} \sum_{k=1}^{N} \diam(C_{k})^{2} \leq 16r^{2}N^{-1}\sum_{k=1}^{N} k^{-2/d}.
\end{multline*}
If $d=1$, then $\sum_{k=1}^{+\infty} k^{-2/d}=\pi^{2}/6$ so $W_{2}(\mu^{(N)},\rho_{r})^{2}\leq g_{1}(r,N):= (4\pi^{2}r/6)N^{-1/2}$. If $d\geq 2$, by H\"older's inequality\footnote{This is not optimal (see \cite{chevallier2018uniform} for a refined version of this quantization argument).},
\begin{equation*}
\sum_{k=1}^{N}k^{-2/d}\leq \left(\sum_{k=1}^{N} k^{-1}\right)^{2/d} N^{1-2/d}\leq N \left(\frac{1+ \ln N}{N}\right)^{2/d},
\end{equation*}
so that
\begin{equation}\label{eq:control:W2:empirical:truncated}
W_{2}(\mu^{(N)},\rho_{r})\leq g_{d}(r,N):= 4 r \left(\frac{1+ \ln N}{N}\right)^{1/d}.
\end{equation}

We now chose a truncation level that depends on $N$, namely $r_{N}=N^{\varepsilon}$ for some $\varepsilon>0$. Combining \eqref{eq:control:W2:true:truncated} and the results above with the triangular inequality, we have, for $N$ large enough,
\begin{equation}\label{eq:control:W2:deterministic:case:truncation}
W_{2}(\mu^{(N)},\rho)\leq CN^{2\varepsilon}e^{-\beta N^{\varepsilon}} + g_{d}(N^{\varepsilon},N).
\end{equation}
Hence, for any $d'>2\vee d$, there exist $K,N_{0}$ such that for all $N\geq N_{0}$,
\begin{equation}\label{eq:control:W2:deterministic:case:final}
W_{2}(\mu^{(N)},\rho)\leq K N^{-1/d'}.
\end{equation}

Taking $d'=d+2$, $0<\varepsilon<1/(d+2)$ and applying Lemma \ref{lem:control:L2:by:W1:compact:support} gives that
\begin{equation*}
\sup_{||y||\leq r_{N}}  \left| ||w_{y}||^2_{L^{2}(\mu^{(N)})}-||w_{y}||^2_{L^{2}(\rho)} \right| \xrightarrow[N\to +\infty]{} 0, 
\end{equation*}
and in particular,
\begin{equation}\label{eq:uniform:control:discrete:L2:deterministic}
\limsup_{N\to\infty }\sup_{1\leq j\leq N}\|w_{x_j}\|^2_{L^2(\mu^{(N)})}\leq \sup_{y\in\R^d}\int_{\R^d}w^2_y(x)\rho(dx)<+\infty.
\end{equation}

Finally, the first assertion of Theorem \ref{thm:2} follows immediately from \eqref{eq:control:P:N:N:P:infty:N} and \eqref{eq:control:P:infty:N:P} combined with \eqref{eq:uniform:control:discrete:L2:deterministic}; \eqref{eq:secondbound:deterministic} is a consequence of \eqref{eq:control:W2:deterministic:case:final}.

\begin{proof}[Proof of Corollary \ref{cor:convergencetoneuralfield}]
Corollary \ref{cor:convergencetoneuralfield} follows under both scenarios from \eqref{eq:control:P:N:N:P:infty:N} in Theorem \ref{thm:direct:coupling}, together with the arguments used to conclude the proofs of Theorem \ref{thm:1} and \ref{thm:2}.
\end{proof}

\section{Final discussion}
\label{Sec:Discussion_on_the_parameters}

In the previous sections, some technical assumptions have been imposed regarding the parameters of our model. These parameters are: the spike rate function $f$, the initial condition $u_{0}$ and the matrix of synaptic strengths $w.$ Here, we discuss these assumptions with respect to standard choices appearing e.g.\ in \cite{Bressloff:12}.

There are three main choices for the function $f$: a sigmoid-like function, a piecewise linear function or a Heaviside function  (see page 6 of \cite{Bressloff:12}). The first two choices obviously fit our Lipschitz condition (Assumption \ref{ass:1}). A Heaviside function does of course not satisfy the Lipschitz condition. However, a Heaviside nonlinearity is less realistic and is mainly studied for purely mathematical reasons (to obtain explicit computations).

A typical choice for the function $u_{0}$ is a Gaussian kernel  (see page 38 of \cite{Bressloff:12}). It can describe an initial bump of the neural activity at some location of the cortex.

Usually, a (homogeneity) simplification is made concerning the function $w$: $w(y,x)$ is assumed to depend on $\|x-y\|$ only. Under this simplification, a common choice is the so-called Mexican hat function (see page 41 of \cite{Bressloff:12}). Nevertheless, inhomogeneous neural fields where the previous simplification is dropped are also studied (see section 3.5 of \cite{Bressloff:12} for instance). As a consequence of the modeling used in the present article, the interaction strength felt by neurons at position $x$ coming from neurons in the vicinity $dy$ of $y$ is given by $w(y,x)\rho(dy)$. Hence, inhomogeneity in neural networks can be considered in two ways:
\begin{itemize}
\item with an inhomogeneous matrix $w$ (whereas the limit spatial distribution $\rho$ is homogeneous),
\item with an homogeneous $w$ but an inhomogeneous distribution $\rho$.
\end{itemize}

Let us discuss the spatial distribution $\rho$. Two standard choices are a uniform distribution over a bounded set (see \cite{luccon2014mean}), or a finite sum of Dirac masses (in that case, the present paper is highly related to \cite{SusanneEva}). In these cases, $\rho$ is compactly supported and therefore satisfies Assumption \ref{ass:exponential:moments}. For unbounded domains, a typical choice is a Gaussian distribution satisfying Assumption \ref{ass:exponential:moments} as well. Finally, let us notice that uniform distributions over unbounded domains (such as the Lebesgue measure on $\mathbb{R}^{d}$) are also considered for the study of the neural field equation. Such a model (which cannot correspond to a probability distribution on the positions) does not fit our assumptions and therefore cannot be obtained as the limit of a microscopic description following our approach.\\

Finally, the rate of convergence obtained in the present paper depends on the modeling scenario: the positions of the neurons are random or they are deterministic. Notice that our approach gives better rates of convergence in the deterministic framework (rate in $N^{-\frac{1}{2\vee d'}}$ for any $d'>d$) than in the random one (rate in $N^{-\frac{1}{4+d'}}$ for any $d'>d$).

\section*{Acknowledgements}
This research has been conducted as part of the project Labex MME-DII (ANR11-LBX-0023-01); it is part of USP project {\em Mathematics, computation, language
and the brain}, FAPESP project {\em Research, Innovation and
Dissemination Center for Neuromathematics} (grant 2013/07699-0), CNPq projects {\em Stochastic modeling of the brain activity}
(grant 480108/2012-9) and {\em Plasticity in the brain after a brachial plexus lesion} (grant 478537/2012-3).
AD and GO are fully supported by a FAPESP fellowship (grants  2016/17791-9  and 2016/17789-4  respectively). 

\bigskip
\appendix

\section{Remaining Mathematical Proofs}\label{sec:App}

\subsection{Proof of Proposition \ref{Prop:2}}
\label{proof of Prop.2}
Using the inequality $f(u)\leq L_f|u|+f(0)$ valid for all $u\in \R$ (which follows from the Lipschitz continuity of $f$),
we have that for each $1\leq i\leq N$,
\begin{equation}\label{eq:Afirst}
\lambda^{(N)}_i(t) \leq f(0)+L_f\left(e^{-\alpha t}|u_0(x_i)|+\frac{1}{N}\sum_{j=1}^N|w(x_j,x_i)| \int_{[0,t[}e^{-\alpha(t-s)}dZ^{(N)}_j(s) \right).
\end{equation}

Thus, using that $e^{-\alpha(t-s)}\leq 1$ for all $0\leq s\leq t$,  we obtain that
\begin{eqnarray*}
E\left[\lambda^{(N)}_i(t)\right]\leq f(0)+L_f\left(|u_0(x_i)|+\frac{1}{N}\sum_{j=1}^N|w(x_j,x_i)|E\left[Z^{(N)}_j(t)\right]\right).
\end{eqnarray*}
Then, denoting $\beta(t)= N^{-1}\sum_{i=1}^N E\left[Z^{(N)}_i(t)\right]$ for each $t\geq 0$,  it follows that
\begin{multline}\label{ineq:expct_value_Z_j}
\beta(T) =  \frac{1}{N}\sum_{i=1}^N\int_{0}^TE\left[\lambda^{(N)}_i(t)\right] dt\\
 \leq  T\left(f(0)+L_f\int_{\R^d}|u_0(x)|\mu^{(N)}(dx)\right) + L_f\sup_j \|w_{x_j}\|_{L^1(\mu^{(N)})}
\int_{0}^T \beta(t)dt.
\end{multline} 
Proposition \ref{prop:existence} implies that $t\mapsto \beta(t)$ is locally bounded so that the first inequality stated in Proposition \ref{Prop:2} follows from Gronwall's inequality. We now turn to the control of the second moment of $Z^{(N)}_i (t) .$ We first work with the stopped processes $Z_i^{(N)} ( \cdot \wedge \tau_K ) , $ where $ \tau_K = \inf \{ t \geq 0 : \sum_{i=1}^N Z_i^{(N)} (t) \geq K\}  ,$ for some fixed truncation level $ K > 0 .$ By It\^o's formula, 
\begin{multline*}
 E \left[ (Z^{(N)}_i(t\wedge \tau_K))^2  \right] = E \left[ Z^{(N)}_i(t\wedge \tau_K )\right]   + 2  E \left[ \int_0^{t \wedge \tau_K} Z^{(N)}_i(s) \lambda^{(N)}_i(s)ds\right] \\
 \le  E \left[ Z^{(N)}_i(t)\right] + \int_0^t E \left[ (Z^{(N)}_i(s\wedge \tau_K))^2 \right] ds +  \int_0^t E \left[  (  \lambda^{(N)}_i(s))^2\right] ds .
 \end{multline*}
\eqref{eq:Afirst} implies that 
\begin{eqnarray*}
[\lambda^{(N)}_i(t)]^2 \leq 2 f(0)^2 +4 L_f^2 \left(|u_0(x_i)|^2 +\frac{1}{N}\sum_{j=1}^N|w(x_j,x_i)|^2 [Z^{(N)}_j(t)]^2 \right).
\end{eqnarray*}
Denoting $\gamma_K (t)= N^{-1}\sum_{i=1}^N E\left[(Z^{(N)}_i(t\wedge \tau_K))^2\right]$ for each $t\geq 0,$ it follows that
\begin{multline}\label{ineq:expct_value_square}
\gamma_K (T) \le \beta ( T) + \int_0^T\gamma_K (s) ds + 
2  T f(0)^2 +\\
 4 L_f^2  T \int_{\R^d}|u_0(x)|^2 \mu^{(N)}(dx)  + 4 L_f^2 \left(\sup_j \|w_{x_j}\|_{L^2(\mu^{(N)})}\right)^2
\int_{0}^T \gamma_K(s)ds.
\end{multline} 
This implies, applying once more Gronwall's inequality, that
\begin{multline*}
\frac{1}{N}\sum_{i=1}^N E\left[(Z^{(N)}_i(T\wedge \tau_K ))^2 \right]\leq  \exp \left\{  T \left( 1 + 4 L_f^2 \left(\sup_j \|w_{x_j}\|_{L^2(\mu^{(N)})}\right)^2 \right) \right\} \times  \\
\times \left[ T\left(f(0)+L_f \| u_0\|_\infty \right)\exp\left\{TL_f \sup_j \|w_{x_j} \|_{L^1(\mu^{(N)})} \right\} + 2 T f(0)^2 + 4 L_f^2 T\| u_0\|_\infty^2 \right] 
.
\end{multline*}
We now obtain the result by letting $K \to \infty .$ 

\subsection{Proof of Proposition \ref{prop:apriori}}\label{proof of Propapriori}
Taking the test function $ g (\eta , x) := \eta_T $ which belongs to $Lip_1, $ we obtain  first for any $N, $ 
\begin{multline*}
\int_{\R^d } \int_{ D ( \R_+ , \N ) }  g(\eta) P_{[0, T ]} ( d \eta , dx)  \le  d_{KR} ( P_{[0, T ]}^{(N,N)} , P_{[0, T ]} )  \\
+  E  \left( \int_{\R^d } \int_{ D ( \R_+ , \N ) }  g(\eta) P^{(N, N)} _{[0, T ]} ( d \eta , dx) \right)  ,
\end{multline*}   
and then, letting $N \to \infty $ and using \eqref{eq:28}, 
$$
\int_{\R^d } \int_{ D ( \R_+ , \N ) }  g(\eta) P_{[0, T ]} ( d \eta , dx)   \le  \limsup_{ N \to \infty } E  \left( \int_{\R^d } \int_{ D ( \R_+ , \N ) }  g(\eta) P^{(N, N)} _{[0, T ]} ( d \eta , dx) \right) .
$$
Yet
\begin{equation*}
\limsup_{ N \to \infty } E  \left( \int_{\R^d } \int_{ D ( \R_+ , \N ) }  g(\eta) P^{(N, N)} _{[0, T ]} ( d \eta , dx) \right)  =  \limsup_{ N \to \infty } \frac{1}{N} \sum_{i=1}^N E [ (Z_i^{(N)} ( T) ] < \infty 
\end{equation*}   
by Proposition \ref{Prop:2} together with \eqref{eq:uniform:control:discrete:L2} or \eqref{eq:uniform:control:discrete:L2:deterministic} (depending on the chosen scenario).

To prove the second assertion, fix a truncation level $K > 0$ and let $ \Phi_K : \R_+ \to \R_+ $ be a smooth bounded function such that 
$ \Phi_K ( x) = x^2 $ for all $ x \le K, $ $\Phi_K ( x) \le x^2 $ for all $ x \geq 0$  and such that $ \| \Phi_K\|_{Lip} := \sup_{ x \neq y } \frac{ |\Phi_K ( x)  - \Phi_K (y ) |}{ |x-y|} \le 4 K .$ Put then $ g ( \eta ) := \frac{1}{4K } \Phi_K ( \eta_T) ,$ by construction, this function belongs to $Lip_1.$ As before, we obtain that  
$$
\int_{\R^d } \int_{ D ( \R_+ , \N ) }  g(\eta) P_{[0, T ]} ( d \eta , dx)  \le 
 \limsup_{ N \to \infty } E  \left( \int_{\R^d } \int_{ D ( \R_+ , \N ) }  g(\eta) P^{(N, N)} _{[0, T ]} ( d \eta , dx) \right)  ,
$$
which implies, multiplying $ g (\eta) $ by $4K, $ that 
$$
\int_{\R^d } \int_{ D ( \R_+ , \N ) }  \Phi_K (\eta_T) P_{[0, T ]} ( d \eta , dx)  \le 
 \limsup_{ N \to \infty } E  \left( \int_{\R^d } \int_{ D ( \R_+ , \N ) }   \eta_T^2 P^{(N, N)} _{[0, T ]} ( d \eta , dx) \right)  ,
$$
where we have used that $ \Phi_K ( x) \le x^2$ to obtain the rhs which does not depend on $K$ any more.
As in the first step of the proof,  the rhs of the above inequality is finite, thanks to Proposition \ref{Prop:2} together with \eqref{eq:uniform:control:discrete:L2} or \eqref{eq:uniform:control:discrete:L2:deterministic} (depending on the chosen scenario). Therefore, letting now $K \to \infty$ in the lhs of the above equation, the assertion follows. 

\subsection{Proof of Proposition \ref{prop:uniqueness}}
\label{proof of Prop.3}
Using the inequality $f(u)\leq L_f|u|+f(0)$ valid for all $u\in \R$, one gets for each $0\leq t\leq T $,
\begin{equation*}
\lambda(t,x)\leq f(0)+L_f\left[e^{-\alpha t}|u_0(x)|+ \int_{\R^d}|w(y,x)|\int_{0}^t e^{-\alpha(t-s)}\lambda(s,y)ds\rho(dy) \right].
\end{equation*}
We use that $e^{-\alpha(t-s)}\leq 1$ for all $0\leq s\leq t, $ that $u_0$ is bounded and apply  H\"older's inequality to the last term (with respect to $\rho (dy ) $) to obtain the upper bound 
\begin{equation*}
\lambda(t,x)\leq f(0)+L_f\|u_0\|_{\infty}+ L_f \| w^x \|_{L^2 ( \rho ) }  \int_{\R^d}\left( \int_{0}^t\lambda(s,y)ds\right)^2  \rho(dy).
\end{equation*}
Since $t\to \int_{\R^d}(\int_{0}^t\lambda(s,y)ds)^2\rho(dy)$ is locally bounded by assumption and since by Assumption \ref{ass:w:lipschitz} together with Remark \ref{rem:2}, $x \mapsto \| w^x \|_{L^2 ( \rho ) }$ is bounded, one obtains from the inequality above that $ \lambda (t, x) $ is  bounded on $ [0, T ] \times \R^d ,$ for any fixed $ T > 0 .$ 

We now prove the continuity of the function $\lambda(t,x)$. 
To that end, take $t,t'\in [0,T]$ with $t\leq t' \leq T$ and $x,y$ in $\mathbb{R}^{d}$. On the one hand, by using successively the Lipschitz continuity of $f$, the triangle inequality and the boundedness of $u_0,$ we deduce that
\begin{multline*}
|\lambda(t,x)-\lambda(t',x)|\leq L_f\|u_0\|_{\infty}|e^{-\alpha t}-e^{-\alpha t'}|\\ + L_f\int_{\R^d}| w(y, x )| \int_{0}^{t'}|e^{-\alpha(t-s)}-e^{-\alpha(t'-s)}|\lambda(s,y)ds\rho(dy)\\
+L_f \int_{\R^d} | w (y, x) | \int_{t}^{t'} e^{-\alpha(t-s)}\lambda(s,y)ds\rho(dy).
\end{multline*}  
Write $ \| \lambda \|_{ [0, T ] \times \R^d , \infty } := \sup_{x \in \R^d , t \le T } \lambda ( t, x ) $ which is bounded thanks to the first step of the proof. It follows from the inequality above that 
\begin{multline}\label{eq:time:continuity:lambda}
|\lambda(t,x)-\lambda(t',x)|\leq \alpha L_f\|u\|_{\infty}h+L_f\|w^x \|_{L^1 ( \rho ) } \| \lambda \|_{ [0, T ] \times \R^d , \infty } \\
\left( (e^{\alpha h}-1)(1-e^{-\alpha h})+\frac{(e^{\alpha h}-1)}{\alpha}\right).
\end{multline}
On the other hand, the Lipschitz continuity of $f$ implies that
\begin{multline*}
|\lambda(t',x)-\lambda(t',y)|\leq L_f e^{-\alpha t' }| u_0(x) - u_0(y)|\\
+L_f\int_{0}^{t'} e^{-\alpha(t'-s)} \int_{\R^d}\lambda(s,z)|w(z,x)-w(z,y)|ds \rho(dz).
\end{multline*}
Thus, using the Lipschitz-continuity of $u_0 ,$ the Lipschitz-continuity of $w$ and the boundedness of $\lambda$, we deduce from the inequality above that 
\begin{equation}\label{eq:lambda:holder}
|\lambda(t',x)-\lambda(t',y)|\leq L_f\left( e^{-\alpha t' }L_{u_0}+ \| \lambda \|_{ [0, T] \times \R^d, \infty } (1-e^{-\alpha t'})\alpha^{-1}L_w\right)\|x-y\|.
\end{equation}
Inequality \eqref{eq:lambda:holder} proves \eqref{lambda_holder_L1} and, together with \eqref{eq:time:continuity:lambda}, proves the continuity of $\lambda$.

Therefore it remains to establish \eqref{ineq:uniqueness}. For that sake, observe that the Lipschitz continuity of $f$ implies that
$$
|F(\lambda)(t,x)-F(\tilde{\lambda})(t,x)|\leq L_f\int_{\R^d}|w(y,x)|\int_{0}^t e^{-\alpha(t-s)} \left|\lambda(s,y)-\tilde{\lambda}(s,y)\right|ds \rho (dy),
$$
whence
$$
|F(\lambda)(t,x)-F(\tilde{\lambda})(t,x)|\leq L_f\|w^x \|_{L^1 ( \rho ) } \|\lambda-\tilde{\lambda}\|_{[0,T]\times \R^d,\infty}\int_{0}^t e^{-\alpha(t-s)}ds ,
$$
which implies the result.     
\\\\

\newpage
\bibliographystyle{abbrv}
\bibliography{Bibli}

\end{document}